\theoremstyle{definition}
\newtheorem{thm}{Theorem}[section]
\crefname{thm}{Theorem}{Theorems}
\newtheorem{prop}[thm]{Proposition}
\crefname{prop}{Proposition}{Propositions}
\newtheorem{lem}[thm]{Lemma}
\crefname{lem}{Lemma}{Lemmas}
\newtheorem{clm}[thm]{Claim}
\newtheorem{conj}[thm]{Conjecture}
\newtheorem{quest}[thm]{Question}
\newtheorem{defn}[thm]{Definition}
\crefname{defn}{Definition}{Definitions}
\crefname{con}{Construction}{Constructions}
\newtheorem{exmp}[thm]{Example}
\newtheorem{rmk}[thm]{Remark}
\newtheorem*{ack*}{Acknowledgements}
\newenvironment{manualtheorem}[1]{%
  \manualtheoreminner
}{\endmanualtheoreminner}
\newcommand{\CC}{C}
\newcommand*{\mb}[1]{\mathbb{#1}}
\newcommand{\mc}{\mathcal}
\newcommand*{\on}[1]{\operatorname{#1}}
\newcommand{\Sym}{{\operatorname{Sym}}}
\newcommand{\Spec}{{\operatorname{Spec}}}
\newcommand{\1}{\textbf{1}}
\newcommand{\rood}[1]{\textcolor{red}{[#1]}}
\begin{document}

\author{Hunter Spink, Dennis Tseng}
\title{Incidence strata of affine varieties with complex multiplicities}
\thanks{
\texttt{Hunter Spink: hspink@math.harvard.edu, Dennis Tseng: DennisCTseng@gmail.com}}
\begin{abstract}
    To each affine variety $X$ and $m_1,\ldots,m_k\in \mathbb{C}$ such that no subset of the $m_i$ add to zero, we construct a variety which for $m_1,\ldots,m_k \in \mathbb{N}$ specializes to the closed $(m_1,\ldots,m_k)$-incidence stratum of $\Sym^{m_1+\ldots+m_k}X$. These fit into a finite-type family, which is functorial in $X$, and which is topologically a family of $\mathbb{C}$-weighted configuration spaces. We verify our construction agrees with an analogous construction in the Deligne category $\on{Rep}(S_{d})$ for $d \in \mathbb{C}$.
    
    We next classify the singularity locus and branching behaviour of colored incidence strata for arbitrary smooth curves. As an application, we negatively answer a question of Farb and Wolfson concerning the existence of an isomorphism between two natural moduli spaces.
   
\end{abstract}
\maketitle

\section{Introduction}
\noindent The moduli space of $d$ unordered points on $\mathbb{A}^1$ over $\mathbb{C}$ is $$\Sym^d\mathbb{A}^1\cong \mathbb{A}^d=\{z^d+a_1z^{d-1}+\ldots+a_d\mid a_1,\ldots, a_d \in \mathbb{C}\},$$
where the correspondence associates to a degree $d$ monic polynomial $f(z)$ its unordered set of roots. For a partition $P=(m_1,\ldots,m_k)$ of $d$, there is an associated \emph{incidence stratum}\footnote{We always mean \emph{closed} incidence stratum, and will always omit the word closed.} $$\Delta^k_P(\mathbb{A}^1):= \{(z-x_1)^{m_1}\ldots (z-x_k)^{m_k}\mid x_1,\ldots,x_k \in \mathbb{C}\}\subset \Sym^d \mathbb{A}^1,$$
which corresponds to the subvariety of $d$-point configurations on $\mathbb{A}^1$ whose incidences are at least as coarse as $P$. The study of the geometry and defining equations of the incidence strata $\Delta^k_P(\mathbb{A}^1)$ dates back to Cayley \cite{Cayley}, and has been extensively studied since \cite{Weyman1,Weyman2,Weyman3,Chipalkatti1,Abdesselam1,Abdesselam2,Kurmann,LS16}.

\begin{exmp}
\label{cubicex}
Let $P=(2,1)$. Then $\Delta^2_{(2,1)}(\mathbb{A}^1)$ is the cubic discriminant hypersurface
$$a_1^2a_2^2+18a_1a_2a_3-4a_2^3-4a_1^3a_3-27a_3^2=0.$$
The normalization is the bijective morphism
\begin{align*}
\Phi_{(2,1)}:\mathbb{A}^1\times \mathbb{A}^1 &\to \Delta^3_{(2,1)}(\mathbb{A}^1)\\
(a,b)&\mapsto (z-a)^2(z-b),
\end{align*}
and despite being a homeomorphism in the Euclidean topology, $\Phi_{(2,1)}$ is not an isomorphism as $\mathbb{A}^1\times \mathbb{A}^1$ is non-singular but $$\textrm{Sing}(\Delta^2_{(2,1)})=\{(z-a)^3 \mid a \in \mathbb{C}\}=\Delta^1_{(3)},$$ occuring when the doubled and single root collide.
This reflects a key difference between the topology and the algebraic geometry of incidence strata.
\end{exmp}

Even though the incidence strata $\Delta^k_{P}(\mathbb{A}^1)$ are defined with $m_1,\ldots,m_k$ positive integers, one can actually make sense of the case where $m_1,\ldots,m_k$ are complex numbers with no subset summing to zero \cite{Kasatani,Sergeev,Etingof}, and these generalized incidence strata for fixed $k$ fit into a finite-type family $\Delta^k(\mathbb{A}^1)$ over the space of allowable $m_i$ \cite[Proposition 2.6]{Sam}. The goal of this paper is to expand upon this theory in a number of different directions with concrete applications to algebraic geometry in mind.

First, we show for fixed $k$ that the incidence strata $\Delta^k_P(X)$ of arbitrary affine varieties $X$ fit into a finite-type family $\Delta^k(X)$ exactly as with $\mathbb{A}^1$ (\Cref{mainthm}), and this construction is functorial in $X$. The construction from \cite[Proposition 2.6]{Sam} yields for fixed $k$ a family of embeddings $\Delta^k_P(\mathbb{A}^1)\subset\mathbb{A}^{N_k}$ where $N_k$ depends only on the number of parts $k$ of the partition $P$. We will see a similar uniform embedding result holds for $\Delta^k_P(X)$ (\Cref{smallembedprop}). The proof that $N_k$ exists is not constructive \cite[Remark 2.7 (1)]{Sam}; we conjecture $N_k = 2^k-1$ (\Cref{HuntersConjecture}), verify $N_k=2^k-1$ for $k\leq 3$, and show $N_4 \ge 15$.

We next extend the method of determining the singular locus of $\Delta^k_P(\mathbb{A}^1)$ \cite{Chipalkatti1,Kurmann} to find the singular locus of colored incident strata in arbitrary smooth curves (\Cref{singthm}). Using this we answer a question of Farb and Wolfson \cite[Question 1.4]{Resultant} about whether certain moduli spaces are isomorphic (\Cref{notiso}).

Finally, Pavel Etingof communicated to us an analogous construction using the Deligne category $\on{Rep}(S_d)$ for $d \in \mathbb{C}$. The Deligne-categoric framework allows us to naturally consider $d$'th powers of varieties for $d\in \mathbb{C}$, and the construction mirrors the classical construction of $\Delta^k_{(m_1,\ldots,m_k)}(X)$ as the $S_{m_1+\ldots+m_k}$-quotient of the $(m_1,\ldots,m_k)$-incidence loci of ordered configurations in $X^{m_1+\ldots+m_k}$. In \Cref{Dcategories}, we verify that our interpolated varieties $\Delta^k_{(m_1,\ldots,m_k)}(X)$ arise as the spectrum of the algebra produced by this construction.

\begin{rmk}
    As we saw in \Cref{cubicex}, there are invariants of incidence strata which can be detected algebraically but not topologically. The topology of an incidence strata is determined by the lattice of partial sums of the $m_i$, yielding finitely many homeomorphism types, but as we will see if $k=3$ and $X=\mathbb{A}^n$ there are infinitely many isomorphism types of $k$-part incidence strata (\Cref{notisotrivprop}). Surprisingly when $k=2$ there are only two isomorphism types of incidence strata occurring in $\mathbb{A}^n$, classified by whether or not $m_1$ and $m_2$ are equal (\Cref{isotrivprop}), but we believe that this is particular to $\mathbb{A}^n$.
\end{rmk}

\subsection{Statement of Results}
\label{statementresults}
We briefly sketch how one can generalize the incidence stratum $\Delta^k_P(\mathbb{A}^1)$ to the case where the weights $m_1,\ldots,m_k$ are complex numbers. If $m_1,\ldots,m_k \in \mathbb{N}$, then we may express

$$(z-x_1)^{m_1}\ldots(z-x_k)^{m_k}=z^{d}-a_1z^{d-1}+\ldots$$
where
\begin{align*}a_i=e_i(\underbrace{x_1,\ldots,x_1}_{m_1},\ldots,\underbrace{x_k,\ldots,x_k}_{m_k})=\sum_{i_1+\ldots+i_k=d}\binom{m_1}{i_1}\ldots\binom{m_k}{i_k}x_1^{i_1}\ldots x_k^{i_k}.\end{align*}

Note that for $m_i \in \mathbb{N}$, the expressions $a_i$ vanish for $i>d$.

For complex weights, these expressions for the $a_i$ still make sense, although the $a_i$ are in general non-zero for all $i$. One can show \cite{Kasatani,Sergeev} that if we restrict the $m_i$ to lie in the subset
$$(\mathbb{C}^k)^{\circ}=\mathbb{C}^k\setminus \bigcup_{S \subset \{1,\ldots,k\}}\{\sum_{i \in S}m_i=0\},$$ then despite not stabilizing to zero for sufficiently large $i$ the $a_i$ eventually depend polynomially on the previous $a_j$, so truncating at this point loses no information. Furthermore, there is a uniform bound for when this occurs \cite{Sam}, allowing one to use the truncated list of coefficients to define a family containing all $k$-part incidence strata on $\mathbb{A}^1$.

Formally, as long as no subset of the $m_i$ sum to zero, the subalgebra $\mathbb{C}[a_1,a_2,\ldots]\subset \mathbb{C}[x_1,\ldots,x_k]$ is finite type. In fact, letting the $m_i$ vary, we have a finite type subalgebra
\begin{align*}
    \mathbb{C}[m_1,\ldots,m_k][a_1,\ldots][\{\frac{1}{\sum_{i\in S}m_i}\}_{S}]\subset \mathbb{C}[m_1,\ldots,m_k,x_1,\ldots,x_k][\{\frac{1}{\sum_{i\in S}m_i}\}_{S}]
\end{align*}
whose inclusion is finite \cite[Proposition 2.6]{Sam} (here $S$ ranges over nonempty subsets of $\{1,\ldots,k\}$). Taking $\Spec$ shows these generalized incidence strata all fit together into a family, embedded in the same affine space $\mathbb{A}^{N_k}$ where $N_k$ is the point at which $m_1,\ldots,m_k,a_1,\ldots,a_{N_k}$ generate the entire sub-algebra.

We show that these considerations can be generalized to arbitrary affine varieties. 
\subsubsection{Incidence strata of affine varieties with complex multiplicities}

\begin{thm}
\label{mainthm}
There is a functorial assignment $X \mapsto \Delta^k(X)$ of affine varieties to affine varieties over $(\mathbb{C}^k)^{\circ}$ such that the reduced fiber over any $P=(m_1,\ldots,m_k) \in \mathbb{N}^k\subset \mathbb{C}^k$ is precisely the $(m_1,\ldots,m_k)$-incidence strata $$\Delta^k_P(X)\subset\Sym^{m_1+\ldots+m_k}X.$$
\end{thm}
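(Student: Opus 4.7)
Set $X=\Spec R$, let $L:=\mathbb{C}[m_1,\ldots,m_k][\{(\sum_{i\in S}m_i)^{-1}\}_S]$, and define $A^k(X)$ to be the $L$-subalgebra of $L\otimes R^{\otimes k}$ generated by
\[
\sigma(f):=\sum_{i=1}^{k}m_i\,f(x_i),\qquad f\in R,
\]
where $f(x_i)$ denotes $f$ placed in the $i$-th tensor factor. Set $\Delta^k(X):=\Spec A^k(X)$ with its canonical map to $(\mathbb{C}^k)^\circ=\Spec L$. Functoriality is immediate: a morphism $\phi:X\to Y$ with $\phi^*:S\to R$ induces $\phi^{*\otimes k}:S^{\otimes k}\to R^{\otimes k}$ sending $\sigma(g)\mapsto\sigma(\phi^*g)$, hence an $L$-algebra map $A^k(Y)\to A^k(X)$ and a morphism $\Delta^k(X)\to\Delta^k(Y)$ over $(\mathbb{C}^k)^\circ$.

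The main technical obstacle is showing $A^k(X)$ is finite type over $L$ and the inclusion $A^k(X)\hookrightarrow L\otimes R^{\otimes k}$ is finite. I reduce first to $X=\mathbb{A}^n$: choose algebra generators of $R$ giving a closed embedding $X\hookrightarrow\mathbb{A}^n$ with surjection $\mathbb{C}[z_1,\ldots,z_n]\twoheadrightarrow R$. Functoriality, plus the fact that each generator $\sigma(f)$ of $A^k(X)$ lifts to $\sigma(\tilde f)$ for some preimage $\tilde f\in\mathbb{C}[z]$, yields a surjection $A^k(\mathbb{A}^n)\twoheadrightarrow A^k(X)$ compatible with the ambient surjection $L\otimes\mathbb{C}[z]^{\otimes k}\twoheadrightarrow L\otimes R^{\otimes k}$, so the finiteness for $X$ follows from that for $\mathbb{A}^n$.

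For $\mathbb{A}^n$, let $B_j\subseteq A^k(\mathbb{A}^n)$ be the subalgebra generated by $\{\sigma(z_j^n):n\geq 1\}$. Sam's result \cite[Proposition 2.6]{Sam} applied in the $j$-th column of variables gives that $B_j$ is finite type over $L$ and that $L[x_{1,j},\ldots,x_{k,j}]$ is finite over $B_j$ (since $\sigma(z_j^n)=\sum_i m_i x_{i,j}^n$ and the power sums and elementary symmetric functions $a_\ell$ of Sam generate the same subalgebra over $\mathbb{Q}$ via Newton's identities). Tensoring over $L$ for $j=1,\ldots,n$ makes
\[
L\otimes\mathbb{C}[z]^{\otimes k}=\bigotimes_{j=1}^{n}L[x_{1,j},\ldots,x_{k,j}]
\]
a finite module over $\bigotimes_{j=1}^{n}B_j\subseteq A^k(\mathbb{A}^n)$, hence a fortiori over $A^k(\mathbb{A}^n)$. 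Then $A^k(\mathbb{A}^n)$, as an $L$-submodule of this finite $\bigotimes_j B_j$-module over the Noetherian ring $\bigotimes_j B_j$, is itself a finite $\bigotimes_j B_j$-module and therefore finite type over $L$. The subtle point is that $\bigotimes_j B_j$ is strictly smaller than $A^k(\mathbb{A}^n)$, which also contains cross-terms like $\sigma(z_1z_2)=\sum_i m_i x_{i,1}x_{i,2}$; but finiteness passes upward to the bigger ring.

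Finally, for the reduced fiber over $P=(m_1,\ldots,m_k)\in\mathbb{N}^k$ with $d=\sum m_i$: the image of $A^k(X)|_{m=P}\to R^{\otimes k}$ is generated by $\sum_i m_i f(x_i)$, which under the standard ring map $(R^{\otimes d})^{S_d}\to R^{\otimes k}$ (dual to the multiplicity-diagonal $X^k\to X^d$ repeating the $i$-th coordinate $m_i$ times) is the image of the power sum $p_1(f)=\sum_j f^{(j)}$. In characteristic zero these power sums generate $(R^{\otimes d})^{S_d}$ as a $\mathbb{C}$-algebra (the fundamental theorem of multisymmetric functions for polynomial $R$, extended via the Reynolds operator when $R$ is a quotient), so the image equals $\mathcal{O}(\Delta^k_P(X))$, the reduced coordinate ring of the scheme-theoretic image of $X^k\to\Sym^d X$. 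The finiteness established above gives a surjective finite map $(\mathbb{C}^k)^\circ\times X^k\twoheadrightarrow\Delta^k(X)$, so the underlying set of $\Spec A^k(X)|_{m=P}$ is the image of $X^k$, namely $\Delta^k_P(X)$, and the reducedness of $\mathcal{O}(\Delta^k_P(X))$ forces the reduction of $\Spec A^k(X)|_{m=P}$ to be exactly $\Delta^k_P(X)\subseteq\Sym^d X$.
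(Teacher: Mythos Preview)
Your proof is correct and follows what the paper calls Construction~2 (the intrinsic subalgebra of $L\otimes R^{\otimes k}$ generated by the weighted power sums $\sigma(f)$), whereas the paper's proof of the theorem proceeds via Construction~1 (deforming the Chow embedding of $\Sym^d\mathbb{A}^n$) and only later, in \Cref{Dcategories}, checks that the two constructions coincide. The paper explicitly remarks after describing Construction~2 that ``it is not clear from this construction that the resulting algebra is finite-type, nor that taking the reduced fiber \ldots\ yields $\Delta^k_{(m_1,\ldots,m_k)}(X)$''; you supply exactly those missing verifications.

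The main divergences are these. For functoriality, your approach makes it automatic (a map $S\to R$ sends $\sigma(g)$ to $\sigma(\phi^*g)$), while the paper must verify by hand that the coefficient map between Chow embeddings is a morphism. For finiteness, the paper proves an $\mathbb{A}^n$ version directly (\Cref{finitelem}) using graded Nakayama plus a pole argument for $\sum \overline{m_i}/(z-v_i)$; you instead invoke Sam's $\mathbb{A}^1$ result in each coordinate column and tensor over $L$, then use Noetherianity of $\bigotimes_j B_j$ to catch the cross-terms $\sigma(z_1z_2)$ etc.---a clean reduction, though it treats \cite{Sam} as a black box where the paper is self-contained. For the reduced fiber, the paper identifies it geometrically through the Chow embedding of $\Sym^d X\subset\Sym^d\mathbb{A}^n$; you identify it algebraically as the image of $(R^{\otimes d})^{S_d}\to R^{\otimes k}$, appealing to the multisymmetric power-sum generation theorem (which the paper also states as a fact in its description of Construction~2) and then using finiteness of $R^{\otimes k}$ over the fiber ring to conclude that the kernel of $A^k(X)|_P\to R^{\otimes k}$ is exactly the nilradical. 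Both routes are valid; yours is more intrinsic and buys functoriality for free, the paper's is more geometric and avoids citing the multisymmetric theorem for the fiber step.
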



\Cref{mainthm} generalizes almost verbatim to \emph{colored incidence strata}, where we will be especially interested in the case of smooth curves $\CC$. Consider again for simplicity the case $\CC=\mathbb{A}^1$, and suppose we have colors $1,\ldots,r$ and $d_i$ points of each color $i$. Then given $\overline{P}=(\overline{m}_1,\ldots,\overline{m}_k)\in (\mathbb{Z}_{\ge 0}^r\setminus \{\overline{0}\})^k$ such that $$\overline{m}_1+\ldots+\overline{m}_k=(d_1,\ldots,d_r),$$ we have an analogously defined \emph{$r$-colored incidence strata}
\begin{align*}
    &\Delta^{k,r}_{\overline{P}}\subset \prod_{i=1}^r \Sym^{d_i}\mathbb{A}^1 \cong \prod_{i=1}^r \mathbb{A}^{d_i}\\
    &\Delta^{k,r}_{\overline{P}}:=\{(\prod_{i=1}^k(z-x_i)^{(\overline{m_i})_1},\ldots,\prod_{i=1}^k(z-x_i)^{(\overline{m_i})_r})\mid x_1,\ldots,x_k \in \mathbb{C}\}
\end{align*}
where the $\overline{m}_i$ now not only keeps track of how many colored points are incident but also how many of each color. We show that the colored incidence strata $\Delta^{k,r}_{\overline{P}}(\mathbb{A}^1)$ can be similarly placed into a natural finite-type family
\begin{center}
\begin{tikzcd}
\Delta^{k,r}(\mathbb{A}^1)\ar[d]&\\
((\mathbb{C}^r)^k)^{\circ}\ar[equal]{r}&(\mathbb{C}^r)^k\setminus \bigcup_{S \subset \{1,\ldots,k\}} \sum_{i \in S} \{\overline{m}_i=0\},
\end{tikzcd}
\end{center}
so we may talk about $\Delta^{k,r}_{\overline{P}}(\mathbb{A}^1)$ for arbitrary $\overline{P}\in ((\mathbb{C}^r)^k)^{\circ}$. The following theorem specializes to \Cref{mainthm} when $r=1$ (i.e. the ``uncolored case'').

\begin{manualtheorem}{\ref{mainthm}'}\label{maincolthm}
There is a functorial assignment $X \mapsto \Delta^{k,r}(X)$ of affine varieties to varieties over $((\mathbb{C}^r)^k)^{\circ}$ such that the reduced fiber over any $\overline{P}=(\overline{m_1},\ldots,\overline{m_k})\in (\mathbb{Z}_{\ge 0}^r\setminus \{\overline{0}\})^k \subset ((\mathbb{C}^r)^k)^{\circ}$ is precisely the $r$-colored $(\overline{m_1},\ldots,\overline{m_k})$-incidence strata
$$\Delta^{k,r}_{\overline{P}}(X)\subset \Sym^{(\overline{m_1})_1+\ldots+(\overline{m_k})_1}X\times \cdots \times \Sym^{(\overline{m_1})_r+\ldots+(\overline{m_k})_r}X.$$
\end{manualtheorem}

\noindent\textit{A topological model.}
One can construct the underlying topological spaces of $\Delta^k_{P}(X)$ and $\Delta^{k,r}_{\overline{P}}(X)$ in the Euclidean topology as the quotient of the space of configurations of $k$ ordered points on $X$ (which we think of as carrying multiplicities associated to $P$ or $\overline{P}$ respectively), by the relation that two configurations $\mathcal{C}_1$ and $\mathcal{C}_2$ are equivalent if at each point of $x\in X$, the sum of the weights associated to the two configurations are the same. In other words, when points collide their multiplicities add. This process also works relatively over the space of allowable weights, and creates a natural family of weighted configuration spaces over this base. In \Cref{topologicalmodel}, we prove that our constructions agree with these topological notions.

\subsubsection{Singularities of incidence strata of curves with complex multiplicities}
Our next result determines the singularity and branching locus of the fiber $\Delta^{k,r}_{\overline{P}}(\mathbb{A}^1)$ (and more generally $\Delta^{k,r}_{\overline{P}}(\CC)$) with $\overline{P} \in ((\mathbb{C}^r)^k)^{\circ}$. \Cref{singthm} vastly generalizes the characterizations in \cite{Chipalkatti1,Kurmann} of singularity loci and branching behaviour of uncolored incidence strata of $\mathbb{A}^1$ (see \Cref{intsingrmk}).

\begin{thm}
\label{singthm}
Let $\CC$ be a smooth curve, let $\overline{P}=(\overline{m_1},\ldots,\overline{m_k})\in ((\mathbb{C}^r)^k)^{\circ}$, and let $\overline{X}\in \Delta^{k,r}_{\overline{P}}(\CC)$ be a point with associated multiplicities $\overline{Q}=(\overline{q_1},\ldots,\overline{q_\ell})\in ((\mathbb{C}^r)^\ell)^{\circ}$. 
\begin{itemize}
\item Preimages $\overline{Y}$ of $\overline{X}$ in the normalization of $\Delta^{k,r}_{\overline{P}}(\CC)$ (which correspond to branches at $\overline{X}$) are in bijection with systems of equations
\begin{align*}
    \overline{q_1}&=\overline{p_{1,1}}+\ldots+\overline{p_{1,i_1}}\\
    \overline{q_2}&=\overline{p_{2,1}}+\ldots+\overline{p_{2,i_2}}\\
    &\vdots\\
    \overline{q_\ell}&=\overline{p_{\ell,1}}+\ldots+\overline{p_{\ell,i_\ell}}
\end{align*}
where we consider for each $j$ the collection of $\overline{p_{j,1}},\ldots,\overline{p_{j,i_j}}$ only up to permutation, such that $i_1+\ldots+i_\ell=k$, and the collection of all $\overline{p_{i,j}}$ constitute the collection $\overline{m_1}\ldots,\overline{m_k}$ with multiplicities.
\item The normalization of $\Delta_{\overline{P}}^{k,r}(C)$ is smooth, and the normalization map is not an immersion near a preimage $\overline{Y}$ of $\overline{X}$ (i.e. the corresponding branch is not smooth at $\overline{X}$) if and only if there exists a $j$ such that the set of $\overline{p_{j,u}}$ with $1\le u \le i_j$ \emph{excluding repetitions} are linearly dependent.
\end{itemize}
\end{thm}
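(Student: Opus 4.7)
The plan is to realize the normalization of $\Delta^{k,r}_{\overline{P}}(C)$ explicitly. By the construction in \Cref{maincolthm} there is a natural map $\varphi : C^k \to \Delta^{k,r}_{\overline{P}}(C)$ (dual to the subalgebra inclusion) which is equivariant for the action of the subgroup $\Sigma \leq S_k$ stabilizing the multiset $\{\overline{m_1}, \ldots, \overline{m_k}\}$. The induced map
\[
\pi : N := C^k / \Sigma \;\cong\; \prod_{\overline{m}} \Sym^{n_{\overline{m}}}(C) \;\longrightarrow\; \Delta^{k,r}_{\overline{P}}(C)
\]
(product over distinct values $\overline{m}$ in $\{\overline{m_1},\ldots,\overline{m_k}\}$, with $n_{\overline{m}}$ the multiplicity) is birational (generically injective on configurations with pairwise distinct support) and finite (by the finiteness inherited from \Cref{maincolthm}), so $N$ is the normalization. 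It is smooth since $C$ is a smooth curve, which proves the smoothness claim. Preimages of $\overline{X} = \sum_j \overline{q_j}[y_j]$ correspond to $\Sigma$-orbits of assignments $\{1,\ldots,k\} \to \{y_1,\ldots,y_\ell\}$ with weights at each $y_j$ summing to $\overline{q_j}$; recording the multiset of $\overline{m_i}$'s landing at each $y_j$ gives exactly the splitting systems of the first bullet.

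For the immersion claim, I would first reduce to a single collision by a Zariski-local factorization. Choose pairwise disjoint open neighborhoods $U_j \ni y_j$ in $C$. Functoriality applied to $\bigsqcup_j U_j \hookrightarrow C$, combined with $\varphi$, identifies $\Delta^{k,r}_{\overline{P}}(C)$ and $N$ near $\overline{X}$ and $\overline{Y}$ with products over $j$ of single-collision data $\Delta^{i_j,r}_{\overline{P_j}}(C)$ and its normalization, where $\overline{P_j} = (\overline{p_{j,1}},\ldots,\overline{p_{j,i_j}})$. Hence it suffices to treat the case $\ell = 1$, $\overline{X} = \overline{q_1}[y]$, writing $\overline{p_u}$ for $\overline{p_{1,u}}$.

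In the single-collision case, let $\overline{v_1}, \ldots, \overline{v_s}$ be the distinct values among $\overline{p_1},\ldots,\overline{p_k}$ with multiplicities $n_1,\ldots,n_s$, so $N = \prod_\ell \Sym^{n_\ell}(C)$. Fixing a uniformizer $t$ of $C$ at $y$, let $p_n^{(\ell)}$ denote the $n$-th power-sum coordinate in the $n_\ell$ particles of the $\ell$-th factor. The coordinate ring of $\Delta^{k,r}_{\overline{P}}(C)$, pulled back via $\pi$, is generated by the colored weighted power sums
\[
p_{n,c} := \sum_{\ell=1}^s (\overline{v_\ell})_c\, p_n^{(\ell)}, \qquad n \ge 1,\ 1 \le c \le r,
\]
via Newton's identities applied to the generators $a_i$ of \Cref{maincolthm}. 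At $\overline{Y}$, the cotangent of the $\ell$-th factor has basis $\{dp_n^{(\ell)} : 1 \le n \le n_\ell\}$ and $dp_n^{(\ell)}|_{\overline{Y}}=0$ for $n > n_\ell$, so
\[
\pi^{*}(dp_{n,c})\big|_{\overline{Y}} \;=\; \sum_{\ell :\, n_\ell \ge n}(\overline{v_\ell})_c\, dp_n^{(\ell)}\big|_{\overline{Y}}
\]
decomposes the cotangent pullback by level $n$. Surjectivity of $\pi^*$ on cotangent spaces—equivalently, $\pi$ being an immersion at $\overline{Y}$—is therefore equivalent to the vectors $\{\overline{v_\ell} : n_\ell \ge n\} \subset \mathbb{C}^r$ being linearly independent for every $n \ge 1$. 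By monotonicity (larger $n$ yields a smaller subset), the binding constraint is $n=1$, namely linear independence of the full set $\{\overline{v_1},\ldots,\overline{v_s}\}$, i.e.\ of the distinct values among the $\overline{p_u}$, as claimed.

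The main obstacle is justifying that the $p_{n,c}$ generate the pullback algebra in the single-collision case: for $C = \mathbb{A}^1$ this is a routine complex-weight extension of classical Newton identities (the hypothesis $\overline{P_j} \in ((\mathbb{C}^r)^{i_j})^{\circ}$ is what makes the inversion well-defined), and for a general smooth $C$ one should reduce to $\mathbb{A}^1$ via the local model supplied by the uniformizer $t$ together with the functoriality in $X$ of \Cref{maincolthm}. Once this generation statement is in hand, the tangent computation above finishes the proof.
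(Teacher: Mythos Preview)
Your identification of the normalization as $\prod_\ell \Sym^{n_\ell}(C)$ and the resulting description of branches is the same as the paper's. Where you genuinely diverge is in the differential computation: the paper works globally for $C=\mathbb{A}^1$, writes down the full Jacobian of $(F_1,\ldots,F_t)\mapsto (\prod_s F_s^{(\overline{n_s})_1},\ldots,\prod_s F_s^{(\overline{n_s})_r})$, right-multiplies by the inverse of the product to reduce each block to $(\overline{n_i})_j\cdot(z^a/F_i)$, and then invokes partial fractions to read off the rank. Your route---pass to a single collision, use the power sums $p_n^{(\ell)}$ as local coordinates on $\Sym^{n_\ell}$, observe $dp_n^{(\ell)}$ vanishes at the deep diagonal for $n>n_\ell$, and stratify the cotangent image by the level $n$---is cleaner and more conceptual when it applies. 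The paper's partial-fraction trick is really doing the same stratification by pole order, but in disguise.

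There are, however, two places where your argument is not yet a proof. First, the single-collision reduction: you assert that functoriality under $\bigsqcup_j U_j\hookrightarrow C$ ``identifies $\Delta^{k,r}_{\overline{P}}(C)$ near $\overline{X}$ with a product over $j$'', but $\Delta^{k,r}_{\overline{P}}(C)$ need not factor near $\overline{X}$ as a scheme---different branches at $\overline{X}$ correspond to \emph{different} product decompositions, so only the individual branch through $\overline{Y}$ can be so identified, and for that you need to know the induced map $\Delta^{k,r}_{\overline{P}}(\bigsqcup_j U_j)\to \Delta^{k,r}_{\overline{P}}(C)$ is \'etale (or at least an immersion) near the relevant component. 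The paper never proves that $\Delta$ takes open immersions to open immersions; it is plausible but not supplied. Second, the passage from general $C$ to $\mathbb{A}^1$ via a uniformizer is where the paper spends real effort: it first treats integer weights by writing $\Delta^{k,r}_{\overline{P}}(C)$ as an $S_{d_1}\times\cdots\times S_{d_r}$-quotient, uses that completion commutes with finite quotients (Knop), and thereby matches formal neighborhoods in $C$ and $\mathbb{A}^1$; then it extends to complex weights by a Zariski-density argument (\Cref{Zariskidense}) for the degeneration direction and by the \'etale comparison on the normalization side for the non-degeneration direction. Your sentence ``reduce to $\mathbb{A}^1$ via the local model supplied by the uniformizer together with functoriality'' hides exactly this work.

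If you want to salvage your approach without the two reductions, note that Construction~2 gives generators $\sum_i (\overline{m_i})_c\,(1^{\otimes i-1}\otimes a\otimes 1^{\otimes k-i})$ for all $a\in \Gamma(C)$; pulling back to $N$ and differentiating at $\overline{Y}$ gives $\sum_{\ell,j}(\overline{v_\ell})_c\sum_{n\le \mu_{\ell,j}}\frac{a^{(n)}(y_j)}{n!}\,dp_n^{(\ell,j)}$, and since on a smooth affine curve one can prescribe finite Taylor jets at finitely many points independently, varying $a$ recovers exactly your level-$n$ stratification at each $y_j$ simultaneously. This bypasses both reductions and yields the criterion directly.
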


\begin{exmp}
Consider the following examples:
\begin{enumerate}
    \item If $\overline{m}_1=\begin{pmatrix} 1 \end{pmatrix}$, $\overline{m}_2=\begin{pmatrix} 2 \end{pmatrix}$, $\overline{q}_1=\begin{pmatrix} 3 \end{pmatrix}$, then there is one preimage of $q_1$ in the normalization, and the normalization map is not a immersion at that point as the two vectors $\begin{pmatrix} 1 \end{pmatrix}$ and $\begin{pmatrix} 2 \end{pmatrix}$ are not linearly independent. This is \Cref{cubicex} above when $\CC=\mathbb{A}^1$.
    \item If $\overline{m_1}=\begin{pmatrix} 1 \end{pmatrix}$, $\overline{m}_2=\begin{pmatrix} 1 \end{pmatrix}$, $\overline{q}_1=\begin{pmatrix} 2 \end{pmatrix}$, then there is one preimage of $\overline{q_1}$ in the normalization, and the normalization map is an immersion at that point as the single vector $\begin{pmatrix} 1 \end{pmatrix}$ is linearly independent. In this case, the normalization is simply the identity map on the symmetric square of $\CC$. 

    \item If $\overline{m_1},\ldots,\overline{m_5}$ are the columns of $\begin{pmatrix} 1 & 1 & 2 & 0 & 100\\ 1 & 1 & 0 & 2 & 101\end{pmatrix}$ and $\overline{q_1},\overline{q_2}$ are the columns of $\begin{pmatrix} 102 & 2\\ 103 & 2\end{pmatrix}$, then there are two preimages in the normalization corresponding to 
    \begin{alignat*}{2}
        \overline{q_1} &= \overline{m_1}+\overline{m_2}+\overline{m_5}   &&\qquad\overline{q_1} = \overline{m_3}+\overline{m_4}+\overline{m_5}\\
         \overline{q_2} &= \overline{m_3}+\overline{m_4}   &&\qquad\overline{q_2} = \overline{m_1}+\overline{m_2}.      
    \end{alignat*}
    The expressions for $\overline{q_1}$ and $\overline{q_2}$ on the left correspond to a preimage where the normalization map is an immersion and the expressions for $\overline{q_1}$ and $\overline{q_2}$ on the right correspond to a preimage where the normalization map is not an immersion.
\end{enumerate}
\end{exmp}

\begin{rmk}\label{intsingrmk}
Since the normalization of $\Delta^{k,r}_{\overline{P}}(\CC)$ is smooth (\Cref{singsect}), its smooth locus coincides with its normal locus by Zariski's main theorem. Thus, a point $\overline{X}\in \Delta^{k,r}_{\overline{P}}(\CC)$ is non-singular if and only if it has exactly one preimage in the normalization and the normalization map is an immersion near that preimage. \Cref{singthm} thus gives a combinatorial description of the singular locus of $\Delta^{k,r}_{\overline{P}}(\CC)$.
\end{rmk}

\noindent\textbf{A Conjecture of Farb and Wolfson.}
As an application, we negatively answer a question of Farb and Wolfson \cite{Resultant} concerning the existence of an isomorphism between two natural moduli spaces. In \cite{Resultant}, it was asked whether the varieties
\begin{align*}
Rat^*_{d,n} &= \{\text{Degree $d$ pointed regular maps $\mathbb{P}^1 \to \mathbb{P}^n$} \}\\
Poly^{d(n+1),1}_{n+1} &= \{\text{Monic degree $d(n+1)$ polynomials with no root of multiplicity $n+1$}\}
\end{align*}
are isomorphic when $n \ge 2$.

We prove the following.

\begin{thm}
\label{notiso}
The varieties $Rat^*_{d,n}(\mathbb{C})$ and $Poly^{d(n+1),1}_{n+1}(\mathbb{C})$ are not isomorphic for any $d,n \ge 2$.
\end{thm}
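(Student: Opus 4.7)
The plan is to reduce the non-isomorphism to a singular-locus comparison of the complements, to which \Cref{singthm} will apply. Set $N = d(n+1)$, and embed $U_1 := Rat^*_{d,n}(\mathbb{C}) \hookrightarrow (\Sym^d\mathbb{A}^1)^{n+1} = \mathbb{A}^N$ and $U_2 := Poly^{d(n+1),1}_{n+1}(\mathbb{C}) \hookrightarrow \Sym^N\mathbb{A}^1 = \mathbb{A}^N$, with complements $H_1$ (the common root locus) and $H_2$ (the locus of polynomials having some root of multiplicity at least $n+1$). A direct count gives $\dim H_1 = \dim H_2 = 1 + (d-1)(n+1)$, so both have codimension $n \ge 2$. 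Hartogs then implies $\Gamma(U_i, \mathcal{O}) = \mathbb{C}[x_1, \ldots, x_N]$, so any isomorphism $\psi: U_1 \xrightarrow{\sim} U_2$ $\Spec$-lifts to an automorphism $\Phi$ of $\mathbb{A}^N$ compatible with the canonical open embeddings, and $\Phi$ must carry $H_1$ isomorphically onto $H_2$; thus it suffices to show $H_1 \not\cong H_2$.

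Both $H_i$ are closed incidence strata. $H_2 = \Delta^k_P(\mathbb{A}^1)$ with $P = (n+1, 1, \ldots, 1)$ (one part equal to $n+1$ and $(d-1)(n+1)$ parts equal to $1$), while $H_1 = \Delta^{k, n+1}_{\overline{P}}(\mathbb{A}^1)$ with $\overline{m_1} = (1, \ldots, 1) \in \mathbb{Z}^{n+1}$ (encoding the shared root) and the remaining $\overline{m_i}$'s being the standard basis vectors $e_1, \ldots, e_{n+1}$, each appearing $d-1$ times (the unshared roots of each $f_j$). For $H_2$: any two distinct positive integers are linearly dependent, so by \Cref{singthm} the branch at a point with $\overline{Q} = (n+2, 1, \ldots, 1)$ (obtained by merging the $(n+1)$-part with a $1$-part) is non-immersive, and this singular stratum has codimension $1$ in $H_2$, giving $\dim\on{Sing}(H_2) \ge \dim H_2 - 1$. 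For $H_1$: the unique nontrivial linear relation among $(1, \ldots, 1), e_1, \ldots, e_{n+1}$ is $(1, \ldots, 1) = \sum_j e_j$; thus \Cref{singthm} yields non-immersive branches only when some group contains all $n+2$ distinct vectors (needing $\ge n+1$ collisions), and multiple branches only when this ambiguity is invoked (needing $\ge n$ collisions to build a second $(1, \ldots, 1)$-point from one extra of each color). Hence $\dim\on{Sing}(H_1) \le \dim H_1 - n$.

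Combining, $\dim\on{Sing}(H_2) - \dim\on{Sing}(H_1) \ge n - 1 \ge 1$ for $n \ge 2$, contradicting $H_1 \cong H_2$. The main obstacle is the combinatorial analysis for $H_1$: one must enumerate groupings of the multiplicity multiset and verify (i) every group with fewer than $n+2$ distinct vectors has linearly independent distinct multiplicities and (ii) each $\overline{q}$ has a unique subset-sum representation unless the relation $(1, \ldots, 1) = \sum_j e_j$ is invoked. Both reduce to the minimality (up to scalar) of this single linear relation among $\{(1, \ldots, 1), e_1, \ldots, e_{n+1}\}$, but making the reduction clean and tracking collision counts carefully will require some bookkeeping.
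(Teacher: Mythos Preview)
Your proposal is correct and follows essentially the same route as the paper: both arguments use Hartogs to extend a putative isomorphism to an automorphism of $\mathbb{A}^{d(n+1)}$, reduce to showing the complements are non-isomorphic, identify those complements as the colored and uncolored incidence strata $\Delta^{(d-1)(n+1)+1,n+1}_{\overline{P}}(\mathbb{A}^1)$ and $\Delta^{(d-1)(n+1)+1}_{Q}(\mathbb{A}^1)$, and then invoke \Cref{singthm} to see that the latter is singular in codimension $1$ while the former is smooth in codimension $<n$. Your combinatorial analysis of the branching and immersion criteria for $H_1$ is more explicit than the paper's (which simply asserts the codimension-$n$ claim and names the singular stratum $\overline{R}$), but the substance is identical.
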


Note that Remark 1.5.1 in \cite{Resultant} shows that for $n=1$ and $d \ge 2$ the varieties have different fundamental groups. For $d=1$ the varieties are isomorphic by an identical argument to the one given in \cite[Remark 1.5.2]{Resultant}. Thus we in fact have a complete classification of when these varieties are isomorphic.

The proof also shows that they in fact can't even be biholomorphic to each other, but the following remains open.

\begin{quest}
Are $Rat^*_{d,n}(\mathbb{C})$ and $Poly^{d(n+1),1}_{n+1}(\mathbb{C})$ homeomorphic over $\mathbb{C}$?
\end{quest}

\subsubsection{Connection to Deligne categories}
Deligne categories $\on{Rep}(S_d)$ \cite{D07} for $d \in \mathbb{C}$ are rigid tensor categories which have a notion of ``complex tensor power of an algebra'', allowing us to consider objects like $\on{Spec}A^{\otimes d}$ parameterizing $d$ ordered points on $\on{Spec}A$ for $d\in \mathbb{C}$. A construction for the interpolated varieties $\Delta^k_{(m_1,\ldots,m_k)}$ via $\on{Rep}(S_d)$ was communicated to us by Pavel Etingof, which we will verify agrees with our construction. The ideal $I(m_1,\ldots,m_k)\subset A^{\otimes d}$ used in the following theorem will be defined and motivated in \Cref{Con3}.

\begin{thm}
\label{comparison}
Let $A$ be a reduced finite-type $\mathbb{C}$-algebra, and let $(m_1,\ldots,m_k)\in (\mathbb{C}^k)^{\circ}$ be such that $m_1,\ldots,m_k$ and $d=m_1+\ldots+m_k$ are all not in $\mathbb{Z}_{\ge 0}$. Then $$\Spec(\on{Hom}_{\on{Rep}(S_d)}(\1,A^{\otimes d}/I(m_1,\ldots,m_k)))\cong \Delta^k_{(m_1,\ldots,m_k)}(\Spec A)$$
\end{thm}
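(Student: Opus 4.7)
The plan is to establish the comparison by constructing an explicit natural map and verifying it is an isomorphism via a reduction to the case $A=\mathbb{C}[z]$. First I would pin down the Deligne-categoric side: since $d\notin \mathbb{Z}_{\ge 0}$, the category $\on{Rep}(S_d)$ is semisimple, so $\on{Hom}_{\on{Rep}(S_d)}(\1,A^{\otimes d})$ inherits a commutative algebra structure from $A$, functorial in $A$. The ideal $I(m_1,\ldots,m_k)$ from \Cref{Con3} interpolates the block-diagonal ideal cutting out the $(m_1,\ldots,m_k)$-incidence locus of $(\Spec A)^d$, so that $A^{\otimes d}/I(m_1,\ldots,m_k)$ plays the role of an interpolated analogue of $A^{\otimes k}$ with weights remembered via the $S_d$-action.

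I would next define the comparison map $\Phi$ using the functoriality of both sides in $A$ together with the universal property of $\Delta^k_{(m_1,\ldots,m_k)}(\Spec A)$ from \Cref{mainthm}. Under $\Phi$, natural symmetric generators in $A^{\otimes d}/I(m_1,\ldots,m_k)$ are sent to the corresponding multi-symmetric functions $e_I(\underbrace{x_1,\ldots,x_1}_{m_1},\ldots,\underbrace{x_k,\ldots,x_k}_{m_k})$ that generate the RHS as in \Cref{statementresults}. This map is forced to exist by functoriality once one matches up the Deligne-categoric and the algebraic descriptions of both ``universal $k$-point weighted configurations.''

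To show $\Phi$ is an isomorphism, I would use functoriality to reduce via surjections $\mathbb{C}[z_1,\ldots,z_n]\twoheadrightarrow A$ to polynomial rings, and then via multiplicativity of both constructions under tensor products $A\otimes B$ down to $A=\mathbb{C}[z]$. For $A=\mathbb{C}[z]$, the RHS is precisely $\Spec$ of the finite-type algebra generated by the $a_i$ from the introduction. On the LHS, semisimplicity of $\on{Rep}(S_d)$ reduces the computation of $\on{Hom}_{\on{Rep}(S_d)}(\1,\mathbb{C}[z]^{\otimes d}/I(m_1,\ldots,m_k))$ to an isotypic decomposition in which the interpolated power sums $\sum_i m_iz_i^j$ appear as natural generators, and Newton's identities then produce exactly the algebra of $a_i$'s.

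The main obstacle is this last step: verifying that $\on{Hom}_{\on{Rep}(S_d)}(\1,\mathbb{C}[z]^{\otimes d}/I(m_1,\ldots,m_k))$ is generated by the interpolated power sums and carries no extra relations beyond those coming from the $a_i$. This hinges on the genericity hypotheses $d, m_1,\ldots,m_k\notin \mathbb{Z}_{\ge 0}$ together with the subset-sum condition defining $(\mathbb{C}^k)^\circ$: these are precisely what keep the relevant simple objects of $\on{Rep}(S_d)$ distinct after specialization, so that the diagrammatic calculus of $\on{Rep}(S_d)$ does not collapse and the finite-type uniformity of \Cref{mainthm} is matched on the Deligne-categoric side.
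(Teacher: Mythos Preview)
Your approach differs substantially from the paper's and contains a genuine gap. The paper does not reduce to $A=\mathbb{C}[z]$; it works directly for arbitrary $A$ by invoking the explicit identification $\on{Hom}_{\on{Rep}(S_d)}(\1,A^{\otimes d})\cong S(A)/(1_A=d)$ due to Etingof, and then computing the composite
\[
S(A)/(1_A=d)\;\longrightarrow\; S(A)/(1_A=m_1)\otimes\cdots\otimes S(A)/(1_A=m_k)\;\longrightarrow\; A^{\otimes k}
\]
on generators. The key calculation is that a generator $a\in A$ is sent to $\sum_{i=1}^k m_i\,(1\otimes\cdots\otimes a\otimes\cdots\otimes 1)\in A^{\otimes k}$, so the image is exactly the subalgebra of Construction~2, which the paper has already identified with Construction~1 and hence with $\Delta^k_{(m_1,\ldots,m_k)}(\Spec A)$. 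No reduction, no case analysis, and the ``no extra relations'' issue never arises because the map to $A^{\otimes k}$ is visibly the quotient by $\on{Hom}(\1,I^{S_d})$.

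The gap in your proposal is the ``multiplicativity under tensor products'' step. Neither side is multiplicative in the sense you need. On the geometric side, the coordinate ring of $\Delta^k_P(\mathbb{A}^n)$ is not the $n$-fold tensor product of that of $\Delta^k_P(\mathbb{A}^1)$: already its embedding dimension grows like $\binom{N_k+n}{n}-1$ rather than linearly in $n$. On the Deligne side, even granting $(B\otimes C)^{\otimes d}\cong B^{\otimes d}\otimes C^{\otimes d}$ in $\on{Rep}(S_d)$, the functor $\on{Hom}(\1,-)$ does not commute with tensor products (invariants of a tensor product are strictly larger than the tensor product of invariants), so $\on{Hom}(\1,(B\otimes C)^{\otimes d}/I)$ is not recovered from the pieces for $B$ and $C$. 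Without this, the passage from $\mathbb{C}[z_1,\ldots,z_n]$ down to $\mathbb{C}[z]$ fails. Moreover, even for $A=\mathbb{C}[z]$ your sketch does not actually carry out the isotypic decomposition needed to identify the invariant algebra and rule out extra relations; the paper's use of the $S(A)/(1_A=d)$ presentation is exactly what makes this step trivial.
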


The condition on $d$ and the $m_i$ not being in $\mathbb{Z}_{\ge 0}$ does not present an obstacle, since one can simply scale $d$ and the $m_1,\ldots,m_k$ by the same number. 
\begin{rmk}
When $d\in \mathbb{Z}_{\ge 0}$, $\on{Rep}(S_d)$ is no longer semisimple and one needs to take a further quotient to get the usual category of representations of $S_d$ \cite[Section 3.4]{CO11}. We expect this does not present a serious issue for the direct construction, see \cite[Proposition 5.1]{D07} and \cite[Remark 4.1.2]{EA14}. 
\end{rmk}


\subsubsection{Effective Bounds for Incidence Strata}
The construction from \cite[Proposition 2.6]{Sam} yields for fixed $k$ a family of embeddings $\Delta^k_P(\mathbb{A}^1)\subset\mathbb{A}^{N_k}$ (see the beginning of \Cref{statementresults} for a more precise definition of $N_k$). Here, we give results and conjectures towards computing $N_k$. Define $$N_k(P)=\min\{n \mid\Delta^k_P\text{ embeds into }\mathbb{A}^n\}.$$
Then we have
$$\max \{N_k(P)\mid P \in \mathbb{N}^k\}\le \max \{N_k(P)\mid P \in (\mathbb{C}^k)^{\circ}\}\le N_k.$$
\begin{conj}
\label{HuntersConjecture}
These three quantities equal $2^{k}-1$ for $k\geq 1$.
\end{conj}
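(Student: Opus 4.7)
The chain $\max_{P \in \mathbb{N}^k} N_k(P) \le \max_{P \in (\mathbb{C}^k)^\circ} N_k(P) \le N_k$ is automatic from the definitions, so the conjecture splits into two independent tasks: the lower bound $\max_{P \in \mathbb{N}^k} N_k(P) \ge 2^k - 1$ and the upper bound $N_k \le 2^k - 1$. Since $k \le 3$ is fully verified in the paper and $N_4 \ge 15$ is already shown, the open parts are the lower bound for $k \ge 5$ and the upper bound for $k \ge 4$, and I would attack them separately.

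For the lower bound, the natural candidate is the partition $P = (1, 2, 4, \ldots, 2^{k-1}) \in \mathbb{N}^k$, whose subset sums $\sum_{i \in S} m_i$ range over exactly $\{1, 2, \ldots, 2^k - 1\}$, so $P \in (\mathbb{C}^k)^{\circ}$ and the total degree is $2^k - 1$. For this $P$, the coefficients $a_d$ vanish for $d > 2^k - 1$ and are nonzero for $1 \le d \le 2^k - 1$, so the ambient embedding is already into $\mathbb{A}^{2^k - 1}$ and the content of the bound is that no smaller embedding exists. The plan is to compute the Zariski tangent space of $\Delta^k_P(\mathbb{A}^1)$ at the most singular point, the origin where all $k$ roots coalesce at $0 \in \mathbb{A}^1$. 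Since each $a_i$ is homogeneous of degree $i$ in $(x_1, \ldots, x_k)$, the maximal ideal $\mathfrak{m}$ of the sub-algebra $R$ at this point is graded, so $\dim_{\mathbb{C}} \mathfrak{m}/\mathfrak{m}^2 = \sum_{d=1}^{2^k - 1} \dim(R_d / (\mathfrak{m}^2)_d)$, where $(\mathfrak{m}^2)_d$ is spanned by products $a_{i_1} \cdots a_{i_\ell}$ with $\ell \ge 2$ and $\sum_j i_j = d$. It suffices to verify for each such $d$ that $a_d$ is not a $\mathbb{C}$-linear combination of these products. The unique binary expansion $d = \sum_{j \in S} 2^{j-1}$ should give enough combinatorial control over which monomials $x_1^{i_1} \cdots x_k^{i_k}$ appear in $a_d$ versus in the products $a_{i_1} \cdots a_{i_\ell}$ (where the exponents $i_j \le m_j$ constraint is relaxed by multiplication) to reduce the verification to a clean linear-algebra check.

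For the upper bound $N_k \le 2^k - 1$, the task is to exhibit universal algebraic identities over the localized base $\mathbb{C}[m_1, \ldots, m_k][\{(\sum_{i \in S} m_i)^{-1}\}_S]$ expressing $a_j$ for $j \ge 2^k$ as polynomials in $a_1, \ldots, a_{2^k - 1}$. Via the Newton--Girard identities $j a_j = \sum_{i=1}^{j} (-1)^{i+1} a_{j-i} p_i$ with $p_i = \sum_\ell m_\ell x_\ell^i$, this is equivalent to expressing the weighted power sums $p_j$ for $j \ge 2^k$ in terms of $p_1, \ldots, p_{2^k - 1}$ and the base. My approach is to work with the generating series $f(z) = \prod_{i=1}^k (1 - z x_i)^{m_i}$, whose logarithmic derivative encodes the $p_j$, and to seek a partial-fraction-like expansion of $1/f(z)$ or related rational combinations naturally indexed by the $2^k - 1$ nonempty subsets $S \subseteq \{1, \ldots, k\}$, with denominators precisely $\sum_{i \in S} m_i$. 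If such an expansion produces exactly $2^k - 1$ subset-indexed quantities from which the full sequence of $p_j$ can be reconstructed, the count matches the conjecture and the required identities follow by extracting coefficients.

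The main obstacle is the upper bound: even for $k = 4$ one must show that fifteen of the $a_i$'s generate the full sub-algebra, and this is open. The precise way the denominators $\sum_{i \in S} m_i$ appear in the localization strongly suggests a generating set indexed by nonempty subsets, but turning this heuristic into a proof requires genuinely new combinatorial identities. An alternative attack is offered by the Deligne-categoric construction of \Cref{Dcategories}: the $S_d$-equivariant decomposition of $A^{\otimes d}/I(m_1,\ldots,m_k)$ under the $k$-fold grouping may naturally split into $2^k - 1$ Hom-space summands, one per nonempty $S \subseteq [k]$, which would give $2^k - 1$ generators directly. In either approach, I expect the final argument to proceed by induction on $k$, with the inductive step controlling how new subset sums produce at most one new generator each.
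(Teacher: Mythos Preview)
This statement is a conjecture, not a theorem: the paper does not prove it in general and explicitly leaves it open. What the paper does prove is \Cref{Conjthm}, which verifies the conjecture only for $k \le 3$ and establishes the single lower bound $N_4((1,2,4,8)) = 15$. That proof is entirely computational: Magma is used to search for linear relations among degree-$d$ products of the weighted power sums $p_i$, both to witness that no relation exists in the relevant degrees (giving the lower bounds $N_3((1,2,4))\ge 7$ and $N_4((1,2,4,8))\ge 15$) and to exhibit the unique relation expressing $p_8$ in terms of $p_1,\ldots,p_7$ over the localized base (giving $N_3 \le 7$ via \Cref{stablem}).

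Your proposal is therefore not a proof to be checked against the paper's, but a research plan for an open problem, and you are candid about this. On the plan itself: your lower-bound idea of computing $\dim \mathfrak{m}/\mathfrak{m}^2$ at the origin of $\Delta^k_P(\mathbb{A}^1)$ for $P=(1,2,\ldots,2^{k-1})$ is sound in principle, since the Zariski tangent-space dimension at any point lower-bounds the affine embedding dimension; but the sentence ``the unique binary expansion \ldots\ should give enough combinatorial control'' is precisely where the work lies and you have not carried it out. The paper's established lower bounds use a different and non-conceptual method, namely brute-force verification that no polynomial relation of the target degree exists. For the upper bound you correctly flag that it is open even for $k=4$; your generating-function and Deligne-category heuristics are suggestive but contain no actual argument, and the paper offers nothing conceptual here either---the bound $N_3 \le 7$ came from an exhaustive symbolic search, not from any structural identity of the kind you sketch.

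In short, there is no proof in the paper to compare against, and your submission is an outline of possible attacks rather than a proof. The genuine gap is simply that the conjecture remains open for $k \ge 4$ in both directions, and nothing in your proposal closes it.
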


\begin{thm}
\label{Conjthm}
\Cref{HuntersConjecture} is true for $k \le 3$, and $N_4((1,2,4,8))=15$.
\end{thm}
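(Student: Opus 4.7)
The plan is to translate the problem into an algebraic question about the subring $R := \mathbb{C}[p_1, p_2, \ldots] \subseteq \mathbb{C}[x_1, \ldots, x_k]$, where $p_n := \sum_{i=1}^k m_i x_i^n$ is the weighted power sum. By the logarithmic generating function identity $\log \prod_i (1+x_i t)^{m_i} = \sum_{n \geq 1} \frac{(-1)^{n+1}}{n} p_n t^n$, this coincides with $\mathbb{C}[a_1, a_2, \ldots]$, so $\Delta^k_P(\mathbb{A}^1) = \Spec R$ and $N_k(P) = \dim_{\mathbb{C}} R^+/(R^+)^2$, which splits degree-wise. The task is then to determine, for each $n$, whether $p_n$ lies in the decomposable subspace $(R^+)^2_n$ spanned by products $p_{n_1} \cdots p_{n_j}$ with $j \geq 2$ and $\sum n_i = n$. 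The case $k = 1$ is immediate from $p_n = m_1^{1-n} p_1^n$.

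For $k = 2$, I would establish $N_2 \leq 3$ via the vanishing of the $3 \times 3$ Hankel identity. Setting $p_0 := m_1 + m_2$, the Hankel matrix $(p_{i+j})_{i,j \geq 0}$ has rank at most $2$, so its $3 \times 3$ principal minor gives the identity
$$p_4 \cdot \bigl((m_1+m_2) p_2 - p_1^2\bigr) = (m_1+m_2) p_3^2 - 2 p_1 p_2 p_3 + p_2^3.$$
Since $(m_1+m_2) p_2 - p_1^2 = m_1 m_2 (x_1 - x_2)^2$ is a nonzerodivisor, one extracts an explicit polynomial expression $p_4 = \alpha p_1^4 + \beta p_1^2 p_2 + \gamma p_2^2 + \delta p_1 p_3$ with $\alpha, \beta, \gamma, \delta$ rational in $m_1, m_2$ and regular on $(\mathbb{C}^2)^\circ$. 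A matching Hilbert series computation $H_R(t) = (1-t^6)/((1-t)(1-t^2)(1-t^3))$ then implies via dimension count that $p_n \in \mathbb{C}[p_1, p_2, p_3]$ for every $n$. The lower bound $\max_P N_2(P) \geq 3$ is immediate from \Cref{cubicex}: $\Delta^2_{(2,1)}(\mathbb{A}^1) \subseteq \mathbb{A}^3$ is a two-dimensional singular hypersurface and cannot embed in $\mathbb{A}^2$.

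For $k = 3$, I apply the same strategy using $4 \times 4$ Hankel minors, which vanish because the Hankel rank is at most $3$. The resulting identity, combined with the Hilbert series of $R$, shows $R$ is generated by $p_1, \ldots, p_7$ over $(\mathbb{C}^3)^\circ$, while the lower bound $\max_P N_3(P) \geq 7$ is verified by a direct rank computation at $P = (1, 2, 4)$: one compares the rank of the coefficient matrix for all products $p_{n_1} \cdots p_{n_j}$ (with $j \geq 2$, $\sum n_i = 7$) against the monomial basis of $\mathbb{C}[x_1, x_2, x_3]_7$, before and after adjoining $p_7$. For $k = 4$ with $P = (1, 2, 4, 8)$ the same strategy supplies the upper bound, and the lower bound reduces to the finite (if large) linear-algebra verification that $p_{15}$ is not in the linear span of the products $\{p_{n_1} \cdots p_{n_j} : j \geq 2,\ \sum n_i = 15,\ n_i \leq 14\}$ inside $\mathbb{C}[x_1, x_2, x_3, x_4]_{15}$. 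The distinct subset-sum property of $(1, 2, 4, 8)$---which gives all of $\{1, \ldots, 15\}$ as subset sums---provides the structure making this check feasible.

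The principal obstacle is that Hankel- or Newton-style identities naturally express $p_n$ only as rational functions in the lower $p_i$; extracting polynomial expressions requires combining these with careful Hilbert series bookkeeping. For $k = 4$, the lower-bound rank computation involves a sizable matrix (rows indexed by partitions of $15$ into parts $\leq 14$, columns by the $\binom{18}{3}$ degree-$15$ monomials in four variables), finite but likely requiring computer assistance.
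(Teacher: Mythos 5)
Your proposal diverges substantially from the paper's proof, which is purely computational (Magma), and your upper-bound argument has a gap that the Hankel route cannot close.

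The lower bounds track the paper's strategy: for a specific $P$, verify that $p_n$ is not a $\mathbb{C}$-linear combination of degree-$n$ products of lower $p_i$'s, by a rank computation in the graded piece $\mathbb{C}[x_1,\ldots,x_k]_n$. Your $k=2$ lower bound via the singularity of the cubic discriminant hypersurface is a nice substitute for the paper's direct check at $P=(1,2)$.

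The gap is in the upper bounds. A $(k+1)\times(k+1)$ Hankel minor vanishes, but the cofactor of the highest $p_i$ in its expansion is a \emph{polynomial} in lower $p_j$'s, not a scalar. For $k=2$ the $3\times 3$ principal minor yields $(p_0 p_2 - p_1^2) p_4 = p_0 p_3^2 - 2p_1 p_2 p_3 + p_2^3$, which shows only that $p_4$ is a \emph{rational} function of $p_1, p_2, p_3$, not that $p_4 \in \mathbb{C}[p_1, p_2, p_3]$. What \Cref{stablem} requires is a relation with \emph{unit} leading coefficient, and the paper records exactly this in \Cref{N2}: $(m_1+m_2)p_4 - 4p_1p_3 + 3p_2^2 = ((m_1+m_2)p_2 - p_1^2)^2$, where $m_1+m_2$ is a unit on $(\mathbb{C}^2)^{\circ}$. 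This is not the Hankel identity, and there is no straightforward derivation of one from the other; your deferral to ``Hilbert series bookkeeping'' is never unpacked, and I do not see how the Hilbert series --- itself unknown until generation is established --- converts a rational dependence into algebra membership.

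The case $k=3$ makes the misalignment sharper. The $4\times 4$ principal Hankel minor involves only $p_0, \ldots, p_6$, and even a $4\times 4$ minor placed so that $p_8$ appears has the $3\times 3$ Hankel cofactor $\det(p_{i+j})_{0\le i,j\le 2}$, a polynomial in $p_0,\ldots,p_4$, as its coefficient. Since $N_3 = 7$, we already know $p_6 \notin \mathbb{C}[p_1,\ldots,p_5]$, so the principal Hankel relation demonstrably fails to be a generation statement. The paper's Magma search lives in degree $8$ and produces the unique relation there; its leading coefficient $(m_1+m_2+m_3)(m_1+m_2)(m_1+m_3)(m_2+m_3)m_1m_2m_3$ is a unit on $(\mathbb{C}^3)^{\circ}$, and that is the essential ingredient, with no known structural derivation.

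Lastly, note that the paper's proof establishes only $N_4((1,2,4,8)) \geq 15$, consistent with the introduction's claim; your asserted upper bound for $k=4$ via the same Hankel strategy inherits the same gap and is not addressed by the paper.
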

We believe that the maximum of $N_k(P)$ is attained for $P=(2^0,2^1,\ldots,2^{k-1})$.
As we will see (\Cref{smallembedprop}), $N_k(P)$ not only bounds the complexity of $k$-part incidence strata $\Delta^k_P(\mathbb{A}^1)$, but also the complexity of $k$-part incidence strata $$\Delta^k_P(X)\subset \Sym^{m_1+\ldots+m_k} X$$
for affine $X$. Indeed, if $X \subset \mathbb{A}^n$, then we may embed $$\Delta^k_P(X) \subset \mathbb{A}^{\binom{N_k(P)+n}{n}-1}.$$
Using $N_k(P)\le N_k$ this yields for fixed $k$ a uniform bound which drastically improves the dimension given by the Chow embedding $$\Delta^k_P(X)\subset \Sym^{m_1+\ldots+m_k}X\subset \mathbb{A}^{\binom{m_1+\ldots+m_k+n}{n}-1}$$
which depends on the total multiplicity $d=\sum_{i=1}^k m_i$.

We are able to produce an upper bound of approximately $k!^k$ on $N_k(P)$ and $2^{k^2}k!^k$ on $N_k$ using the effective Nullstellensatz \cite[Theorem 1.3]{Jelonek}, but with a slightly worse bound of approximately $k!^{2k}$, we can explicitly eliminate coefficients in the Chow embedding to produce our embedding. These yield recursive identities similar in spirit to the Newton identities, but for symmetric polynomials whose variables are specialized to $k$ groups of distinct values with $m_i$ variables in each group (\Cref{explicitsect}), which may be of independent interest.

\subsection{Acknowledgements}
We would like to thank Pavel Etingof, Benson Farb, Alexander Smith, and Marius Tiba for helpful conversations.

\section{Three Constructions for Interpolating $k$-part incidence strata}\label{3Con}

In this section, we outline three ways to interpolate $k$-part incidence strata of an affine variety $X$, which we will later show in \Cref{Dcategories} yield the same result. For simplicity we describe here all of the constructions in the uncolored case, and in future sections only detail the extension of this first construction to the colored case. We end this section with a discussion of the subtleties of reducedness for the fibers of $\Delta^k(X)$ over $(\mathbb{C}^k)^{\circ}$. We could extend all of our constructions to $\Delta^k(\Spec A)$ for $A$ non-reduced, but we do not understand the scheme-theoretic fibers over $P\in (\mathbb{C}^k)^{\circ}$ well enough to refine \Cref{mainthm} in this direction.

\subsection{Construction 1: Deforming the Chow embedding}

This first construction is the most straightforward, and the most ad hoc construction. Our plan will be to first construct $\Delta^k(\mathbb{A}^n)$ by deforming the Chow embedding, and then define $\Delta^k(X)$ as the closed subvariety where the points are restricted to lie on $X$.

Recall that for an $n$-dimensional vector space $V$, the Chow embedding takes
\begin{align*}\Sym^d \mathbb{A}(V) &\hookrightarrow \prod_{i=1}^d \Sym^i V\\
(v_1,\ldots,v_d)^{S_d}&\mapsto \text{Coefficients of }(z-v_1)\ldots (z-v_d),\end{align*}
in exact analogue to the identification of $\Sym^d\mathbb{A}^1$ with the set of degree $d$ monic polynomials in an indeterminate $z$ (see \Cref{gelflem}). Therefore the $(m_1,\ldots,m_k)$-incidence strata corresponds to
$$\Delta^k_{(m_1,\ldots,m_k)}(\mathbb{A}(V))=\{(z-v_1)^{m_1}\ldots (z-v_k)^{m_k}\mid v_1,\ldots,v_k \in V\}\subset \prod_{i=1}^d\Sym^iV$$
where $d=m_1+\ldots+m_k$.
Exactly as we sketched in the introduction in the case of $\mathbb{A}^1$ from \cite{Sam}, $\Delta^k_{(m_1,\ldots,m_k)}(\mathbb{A}(V))$ projects isomorphically onto its image in $\prod_{i=1}^{N_k}\Sym^i V$, and now we have a bounded list of coefficients which we can interpolate to complex $m_i$. Writing $a_i$ for the coefficient of $z^{d-i}$ in $(z-v_1)^{m_1}\ldots (z-v_k)^{m_k}$, which depends polynomially on $m_1,\ldots,m_k,v_1,\ldots,v_k$, we will show the morphism
\begin{align*}
    (\mathbb{C}^k)^{\circ}\times \mathbb{A}(V)^k &\to (\mathbb{C}^k)^{\circ}\times \prod_{i=1}^{N_k}\Sym^i V\\
    (m_1,\ldots,m_k,v_1,\ldots,v_k) &\mapsto (m_1,\ldots,m_k,a_1,\ldots,a_{N_k})
\end{align*}
is finite and birational, so in particular the image is closed. In particular, the reduced fiber of the image over any $(m_1,\ldots,m_k)\in (\mathbb{C}^k)^{\circ}$ is precisely $\Delta^k_{(m_1,\ldots,m_k)}(\mathbb{A}(V))$ projected to $\prod_{i=1}^{N_k}\Sym^iV$, which as we said before is isomorphic to $\Delta^k_{(m_1,\ldots,m_k)}(\mathbb{A}(V))$. Therefore we may take $\Delta^k(\mathbb{A}(V))$ to be the image.

Finally, given an affine variety $X$, we choose an embedding $X \hookrightarrow \mathbb{A}(V)$, and take $\Delta^k(X)$ to be the image of the proper composite map
$$(\mathbb{C}^k)^{\circ}\times X^k \to (\mathbb{C}^k)^{\circ}\times \mathbb{A}(V)^k \to (\mathbb{C}^k)^{\circ}\times \prod_{i=1}^{N_k}\Sym^iV.$$

We will later show that this is functorial and works as expected topologically.

\subsection{Construction 2: Interpolating the symmetric quotient}

The next construction is manifestly functorial, and relies on the identification of $\Delta^k_{(m_1,\ldots,m_k)}(X)$ as the $S_d$-quotient of the corresponding ordered $(m_1,\ldots,m_k)$-incidence strata in $X^d$, where $d=m_1+\ldots+m_k$. Here, the ordered $(m_1\ldots,m_k)$-incidence strata in $X^d$ is the union of all embedded copies of $X^k$ formed by partitioning $\{1,\ldots,d\}=M_1\sqcup \ldots \sqcup M_k$ with $|M_i|=m_i$, and taking
$$\underbrace{X\times \ldots \times X}_k \to X^{M_1}\times \ldots \times X^{M_k}=X^d$$
where the first map is the product of diagonal embeddings. Writing $X=\Spec A$, we now exploit the fact that the subalgebra $(A^{\otimes d})^{S_d}\subset A^{\otimes d}$ is the subalgebra generated by elements of the form $\sum_{i=1}^d 1^{\otimes i-1}\otimes a \otimes 1^{\otimes d-i}$. The ideal of each embedded copy of $X^k$ has the same intersection with $(A^{\otimes d})^{S_d}$, so the algebra of functions on $\Delta^k_{(m_1,\ldots,m_k)}(X)$ is the cokernel of the composite
$$(A^{\otimes d})^{S_d}\hookrightarrow A^{\otimes d}=A^{\otimes m_1}\otimes \ldots \otimes A^{\otimes m_k} \to \underbrace{A \otimes \ldots \otimes A}_k=A^{\otimes k}$$
where the final map is the tensor product of the multiplication maps $A^{\otimes m_i}\to A$. Thus, we see that  $\Delta^k_{(m_1,\ldots,m_k)}(X)$ is the spectrum of the subalgebra of $A^{\otimes k}$ generated by expressions of the form
$$\sum_{i=1}^k m_i (1^{\otimes i-1}\otimes a \otimes 1^{\otimes k-i})$$
with $a \in A$. Finally, we take $\Delta^k(X)$ to be the spectrum of the subalgebra of $\Gamma((\mathbb{C}^k)^{\circ})\otimes A^{\otimes k}$ generated by $m_1,\ldots,m_k$ and all expressions of the above form.
\begin{rmk}
It is not clear from this construction that the resulting algebra is finite-type, nor that taking the reduced fiber over $(m_1,\ldots,m_k)\in (\mathbb{C}^k)^{\circ}$ yields $\Delta^k_{(m_1,\ldots,m_k)}(X)$. However, it is easy to show that this construction agrees with Construction 1 (see \Cref{Dcategories}), for which these properties are checked.
\end{rmk}

\subsection{Construction 3: Deligne categorical interpolation}
\label{Con3}

The final construction, communicated to us by Pavel Etingof, uses the natural interpolation provided by the Deligne category $\on{Rep}(S_d)$ to construct the algebra of functions on $\Delta^k_{(m_1,\ldots,m_k)}(X)$ with $(m_1,\ldots,m_k)\in (\mathbb{C}^k)^{\circ}$. It would be interesting to check the entire interpolated variety $\Delta^k(X)$ can be obtained through Deligne categories, but the verification and construction would be necessarily more complicated.

Given a commutative algebra $A$ and $d \in \mathbb{C}$, we can construct the algebra $A^{\otimes d}$ in the Ind-completion of the Deligne category $\on{Rep}(S_d)$ \cite[Section 4.1]{E14}. There is a natural multiplication map of algebras $A^{\otimes d}\to A$ in $\on{Ind}(\on{Rep}(S_d))$, whose kernel is an ideal $J_d$. Intuitively, we should think of $A^{\otimes d}/J_d$ as the algebra of functions on the diagonal of ``$(\Spec A)^d$'' where ``all $d$ points are equal''. 

Given $m_1,\ldots,m_k\in \mathbb{C}$ with $m_1+\ldots+m_k=d$, there is a natural restriction functor \cite[Section 2.3]{E14} $\on{Res}:\on{Rep}(S_d)\to \on{Rep}(S_{m_1})\boxtimes \ldots \boxtimes \on{Rep}(S_{m_k})$ taking
$$A^{\otimes d} \mapsto A^{\otimes m_1}\boxtimes \ldots \boxtimes A^{\otimes m_k}.$$  
\begin{defn}
Let $I(m_1,\ldots,m_k)$ be the sum of all ideals $J\subset A^{\otimes d}$ such that $$\on{Res}(J)\subset J_{m_1}\boxtimes \ldots \boxtimes J_{m_k}.
$$
\end{defn}
The quotient $A^{\otimes d}/I(m_1,\ldots,m_k)$ should be thought of as the algebra of functions on the configurations of points in ``$(\Spec A)^d$'' where the $d$ points that are grouped with multiplicities $m_1,\ldots,m_k$. Letting $\1$ be the object in $Rep(S_d)$ corresponding to the trivial representation, $\on{Hom}(\1, -)$ should be thought of as taking $S_d$-invariants, so our third construction will be the spectrum of
$$\on{Hom}_{\on{Rep}(S_d)}(\1,A^{\otimes d}/I(m_1,\ldots,m_k)),$$ 
which is an honest $\mathbb{C}$-algebra which should morally be the algebra of functions on $\Delta^k_{(m_1,\ldots,m_k)}(X)$.
In fact, when $m_1,\ldots,m_k,d \not \in \mathbb{Z}_{\ge 0}$, we will show in \Cref{Dcategories} that Constructions 2 and 3 agree for arbitrary (possibly nonreduced and non-finite-type) $\mathbb{C}$-algebras $A$.

\subsection{On the fibers of $\Delta^k(X)$}
\label{fibersubsection}

\Cref{mainthm} shows the reduced fibers of the family $\Delta^k(X)\to (\mathbb{C}^k)^{\circ}$ agree with $\Delta^k_P(X)$ for $P\in (\mathbb{C}^k)^{\circ}$. However, if we take scheme-theoretic fibers, some of the fibers will be nonreduced.
\begin{exmp}
\label{2clumpexample}
We will see in \Cref{isotrivprop} that $$\Delta^2(\mathbb{A}^1)\cong\mathbb{C}[m_1,m_2,\frac{1}{m_1m_1(m_1+m_2)}][x,y]/((m_1-m_2)^2x^3=y^2).$$ Taking the fiber over $(m_1,m_2)=(a,a)$ yields the doubled line $\mathbb{C}[x,y]/(y^2)$. Concretely, this non-reducedness occurs because there is a relation between the coefficients of $(z-x_1)^a(z-x_2)^a=z^{2a}+b_1z^{2a-1}+\ldots$ which does not occur through specializing a relation between the coefficients of $(z-x_1)^{m_1}(z-x_2)^{m_2}$, namely $$\binom{1/a}{1}b_3+\binom{1/a}{2}(2b_1b_2)+\binom{1/a}{3}b_1^3=0.$$
One can check that the square of this equation does arise from such a specialization.
\end{exmp}
In spite of this, it is easy to see by the fact that geometric-reducedness of fibers is an open condition on the base \cite[EGA IV33, 12.2.4]{EGAIV3} and from Noetherian induction that there exists a partition of $(\mathbb{C}^k)^{\circ}$ into locally closed sets so that $\Delta^k(X)$ has reduced fibers over $(\mathbb{C}^k)^{\circ}$ when restricted to the preimage of any of the parts. In particular, replacing $\Delta^k(X)$ with the disjoint union of these preimages yields a variety whose scheme-theoretic fiber over $P\in(\mathbb{C}^k)^{\circ}$ is precisely $\Delta^k_P(X)$. However, it is unclear whether one can take this partition to be independent of $X$, which would be necessary to extend the functoriality part of \Cref{mainthm} in this direction.

Motivated by examples like \Cref{2clumpexample}, we conjecture that one could simply stratify $(\mathbb{C}^k)^{\circ}$ according to the combinatorics of point-collisions in the incidence strata, i.e. which partial sums of the $m_i$ agree.
\begin{conj}
The stratification of $(\mathbb{C}^k)^{\circ}$ induced by the equivalence relation $(m_1,\ldots,m_k) \sim (n_1,\ldots,n_k)$ if $$\sum_{i \in A}m_i=\sum_{i \in B}m_i \Leftrightarrow \sum_{i \in A}n_i=\sum_{i \in B}n_i\text{ for all $A,B\subset \{1,\ldots,k\}$}$$
has the property that the preimage in $\Delta^k(X)$ of any equivalence class has reduced fibers over $(\mathbb{C}^k)^{\circ}$.
\end{conj}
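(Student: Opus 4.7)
The strategy is to combine the topological model of $\Delta^k(X)$ with Construction 2 to show that within any stratum $T$, the reduced preimage in $\Delta^k(X)$ is a ``combinatorially uniform'' family whose fibers are manifestly reduced. The topological model identifies the fiber of $\Delta^k(X)\to(\mathbb{C}^k)^\circ$ at $(m_1,\ldots,m_k)$ set-theoretically as a quotient of $X^k$ by an equivalence relation ${\sim_\tau}$ determined only by the combinatorial type $\tau$ of partial-sum equalities, and the heart of the conjecture is to show this identification upgrades to the scheme-theoretic reduced preimage.

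My first step would be to describe ${\sim_\tau}$ as an algebraic equivalence relation on $X^k$---a closed subvariety of $X^k \times X^k$---depending only on $\tau$. For strata defined purely by coincidences $m_i=m_j$, the relation is induced by a finite group action (permutations of indices with equal multiplicities). For strata with nontrivial partial-sum relations like $\{m_1+m_2=m_3+m_4\}$, additional equivalences appear on proper closed subsets of $X^k$, e.g.\ configurations with $x_1=x_2$ and $x_3=x_4$ become equivalent to those with the positions of the two pairs exchanged. Since $\tau$ is constant over the stratum $T_\tau$, so is the equivalence relation, and hence $T_\tau \times (X^k/{\sim_\tau})$ is a reduced variety with constant fibers $X^k/{\sim_\tau}$. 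I would then show that the reduced preimage of $T_\tau$ in $\Delta^k(X)$ equals this quotient, via the proper surjection $T_\tau \times X^k \to \Delta^k(X)|_{T_\tau}$ coming from Construction 1.

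The main obstacle is to verify that no ``residual non-reducedness'' persists within a stratum beyond what is killed by reduction. \Cref{2clumpexample} shows that the scheme-theoretic fiber at walls picks up extra ideal generators that are squares of the ``correct'' linear relations; one must verify these square generators arise only from the wall and not from within the stratum. I would first handle simple strata $\{m_i=m_j\}$ (where ${\sim_\tau}$ comes from a finite group action and Construction 2 admits an invariant-ring description), then build up to more elaborate strata by induction on their combinatorial complexity. An alternative worth pursuing is the Deligne-categorical Construction 3: a families version of \Cref{comparison} over $T_\tau$ may directly produce the flat reduced quotient, bypassing the explicit combinatorial analysis.
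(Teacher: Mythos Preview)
This statement is a \emph{conjecture} in the paper, not a theorem; the authors explicitly leave it open and offer no proof. There is therefore nothing in the paper to compare your argument against, and any complete proof would be new mathematics beyond what the paper establishes.

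Your proposal is an outline of a plausible strategy, but it has a genuine gap at its center. You correctly observe that the topological model makes the set-theoretic fiber depend only on the combinatorial type $\tau$, and that on pure coincidence strata (those cut out only by equalities $m_i=m_j$) the relation $\sim_\tau$ is induced by a finite group action, so $X^k/{\sim_\tau}$ is an honest reduced variety. But for strata involving nontrivial partial-sum coincidences, $\sim_\tau$ is \emph{not} a group action (as you yourself note), and a quotient of $X^k$ by such an equivalence relation need not exist as a scheme with the expected properties; you give no construction of it. More seriously, the sentence ``I would then show that the reduced preimage of $T_\tau$ in $\Delta^k(X)$ equals this quotient'' is not an argument but a restatement of the goal: asserting that the reduced preimage is a product $T_\tau\times Y$ with $Y$ reduced is strictly stronger than the conjecture (which only asks for reduced fibers), and you provide no mechanism for proving it. Your paragraph on the ``main obstacle'' acknowledges this and then defers to an unspecified induction on ``combinatorial complexity'' without saying what the inductive step would be or why strata with partial-sum relations reduce to simpler ones.

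The suggestion to use a families version of Construction~3 over $T_\tau$ is interesting but faces exactly the difficulty the paper flags: the Deligne category $\on{Rep}(S_d)$ is not semisimple at special values of $d$, and the non-reducedness in \Cref{2clumpexample} arises precisely at such special loci, so one would expect the categorical construction to see the same degeneration rather than bypass it.
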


\section{Interpolating colored incidence strata in $\mathbb{A}^n$}
\label{DeltaAn}
In this section we construct $\Delta^{k,r}(\mathbb{A}^n)$ as claimed in \Cref{maincolthm}. Our construction will be a generalization of \cite{Kasatani,Sergeev,Sam}. For $\overline{P}=(\overline{m_1},\ldots,\overline{m_k})\in (\mathbb{N}^r)^k$, we define $\overline{d}$ by
$$\overline{d}=(d_1,\ldots,d_r):=\overline{m_1}+\ldots+\overline{m_k},$$
where $\overline{P}$ is dropped from the notation.
\begin{defn}
Define the \emph{weighted elementary symmetric polynomial $e_n$} to be $$e_n(y_1,\ldots,y_k,x_1,\ldots,x_k):=\sum_{i_1+\ldots+i_k=n}\binom{y_1}{i_1}\ldots\binom{y_k}{i_k}x_1^{i_1}\ldots x_k^{i_k}$$
and define the formal series

$$f_i(\overline{P},x_1,\ldots,x_k):=\prod_{j=1}^k(z-x_j)^{(\overline{m_j})_i}=\sum_{j=0}^\infty (-1)^jz^{d_i-j}e_j((\overline{m_1})_i,\ldots,(\overline{m_k})_i,x_1,\ldots,x_k).$$
The exponents of $z$ are complex numbers, but we introduce these expressions purely as a bookkeeping device. Define the weighted power sum $p_n$ to be
$$p_n(y_1,\ldots,y_k,x_1,\ldots,x_k):=y_1x_1^n+\ldots+y_kx_k^n.$$
We define $e_n(x_1,\ldots,x_k)$ and $p_n(x_1,\ldots,x_k)$ to be the usual elementary symmetric functions and power sums respectively, which are the specializations of the weighted versions to when all the weights are $1$.
\end{defn}
\begin{rmk} Note that for $\overline{P}=(\overline{m_1},\ldots,\overline{m_k})\in (\mathbb{N}^r)^k$, only finitely many coefficients of $f_i(\overline{P},x_1,\ldots,x_k)$ are non-zero, and in the terminology of the introduction, $$\Delta^k_{\overline{P}}(\mathbb{A}^1):=\{(f_1,\ldots,f_r)\mid x_1,\ldots,x_k \in \mathbb{C}\}\subset \Sym^{d_1}\mathbb{A}^1\times \ldots \times \Sym^{d_r}\mathbb{A}^1.$$
\end{rmk}
\Cref{gelflem} is given by restricting the Chow form $\Sym^d\mathbb{P}^n\to \mathbb{P}(H^0(\mathbb{P}^{n\vee},\mathscr{O}_{\mathbb{P}^{n\vee}(d)}))$ to an affine chart, where the Chow form maps the zero-cycle $P_1+\cdots+P_d$ to the product $L_1\cdots L_d$, where $L_i$ is the linear form on $\mathbb{P}^{n\vee}$ parameterizing hyperplanes through $P_i$.

\begin{lem}
\label{gelflem}
Let $V$ be an $n$-dimensional vector space. Then there is an embedding
\begin{align*}\Sym^d \mathbb{A}(V) &\hookrightarrow V \times \Sym^2V \times \cdots \times\Sym^d V\\
(v_1,\ldots,v_d)^{S_d} &\mapsto (-e_1(v_1,\ldots,v_k),e_2(v_1,\ldots,v_k),\ldots,(-1)^de_d(v_1,\ldots,v_k)).\end{align*}
\end{lem}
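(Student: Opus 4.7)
The plan is to identify the claimed map with the restriction of the classical Chow form embedding for $0$-cycles on $\mathbb{P}^n$ to an affine chart, as indicated in the paragraph preceding the lemma. Set $W = V \oplus \mathbb{C}$ with coordinate $z$ on the $\mathbb{C}$ summand, and realize $\mathbb{A}(V) \hookrightarrow \mathbb{P}(W)$ as the affine chart $\{z \neq 0\}$ via $v \mapsto [v : 1]$. By duality each point $v \in V$ cuts out a unique hyperplane in $\mathbb{P}(W^*)$, represented (with the appropriate sign convention) by the linear form $L_v = z - \ell_v \in W^*$, where $\ell_v \in V \cong \Sym^1 V$ is the canonical linear function on $V^*$ associated with $v$.

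The Chow form
$$\Psi\colon \Sym^d \mathbb{P}(W) \hookrightarrow \mathbb{P}(\Sym^d W^*), \qquad (v_1, \ldots, v_d)^{S_d} \mapsto L_{v_1} \cdots L_{v_d},$$
is a classical closed immersion onto the Chow variety of $d$-point $0$-cycles. The key computation is to expand
$$L_{v_1} \cdots L_{v_d} = \prod_{j=1}^d (z - \ell_{v_j}) = z^d + \sum_{i=1}^{d} (-1)^i z^{d-i}\, e_i(v_1, \ldots, v_d),$$
where $e_i(v_1, \ldots, v_d) \in \Sym^i V$ is the $i$-th elementary symmetric function of the $v_j$, computed in the graded ring $\Sym^\bullet V$. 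The coefficient of $z^d$ is identically $1$ on $\Sym^d \mathbb{A}(V)$, so $\Psi$ restricts to a closed immersion into the affine chart of $\mathbb{P}(\Sym^d W^*)$ where this top coefficient does not vanish. Reading off the remaining $i = 1, \ldots, d$ coefficients identifies this affine chart with $\prod_{i=1}^d \Sym^i V = V \times \Sym^2 V \times \cdots \times \Sym^d V$ and produces exactly the stated map with the indicated signs.

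The one nontrivial input is the closed immersion property of the Chow form $\Psi$, a standard classical fact about Chow varieties of $0$-cycles on projective space. If one prefers a direct verification, the equivalent algebraic statement is that the components of the elementary symmetric functions $e_i(v_1, \ldots, v_d)$ generate the invariant ring $\bigl((\Sym^\bullet V^*)^{\otimes d}\bigr)^{S_d}$, i.e.\ that the ring map associated with the stated morphism of affine schemes is surjective. After choosing a basis of $V^*$ this reduces, coordinate-direction by coordinate-direction, to the fundamental theorem of symmetric polynomials together with a standard polarization argument to produce all cross-terms between distinct basis directions.
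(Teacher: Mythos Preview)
Your argument is correct and follows exactly the route the paper indicates in the paragraph preceding the lemma: restrict the Chow form for $0$-cycles on $\mathbb{P}(W)$ to the affine chart $\{z\neq 0\}$ and read off the coefficients of $\prod_j(z-v_j)$; the paper's own proof simply cites \cite[Chapter~4, Proposition~2.1 and Theorem~2.2]{Gelfand} for this. One small notational slip: since the expansion produces $e_i(v_1,\ldots,v_d)\in \Sym^i V$, the linear form $L_v=z-v$ should live in $W=V\oplus\mathbb{C}$ (equivalently $W^{**}$), not in $W^*$ as you wrote---but this does not affect the substance.
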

\begin{proof}
See \cite[Chapter 4, Proposition 2.1 and Theorem 2.2]{Gelfand}.
\end{proof}

\begin{rmk}
Confusingly, $\Sym^d \mathbb{A}(V)$ in \Cref{gelflem} is meant as the symmetric quotient $\mathbb{A}(V)^{d}/S_d$ as a variety, while $\Sym^d V$ is meant to be the vector space of dimension $\binom{n+d-1}{n-1}$. 
\end{rmk}

As a shorthand we may write this map as $$(v_1,\ldots,v_d)^{S_d} \mapsto (z-v_1)(z-v_2)\ldots(z-v_d)$$ where multiplication of the $v_i$'s is taken in the appropriate $\Sym^i V$. Then $$\Delta^{k,r}_{\overline{P}}(\mathbb{A}(V))\subset \Sym^{d_1}\mathbb{A}(V)\times \cdots \times \Sym^{d_r}\mathbb{A}(V)$$ for $\overline{m_i}\in(\mathbb{Z}^{\ge 0})^r\setminus \{\overline{0}\}$ corresponds to the locus
\begin{align*}\Delta^{k,r}_{\overline{P}}(\mathbb{A}(V))&=\{(f_1(\overline{P},v_1,\ldots,v_k),\ldots,f_r(\overline{P},v_1,\ldots,v_k)) \mid v_1,\ldots,v_k \in V\}\\
&\subset(V \times \cdots \times \Sym^{d_1}V)\times \cdots \times (V \times \cdots \times \Sym^{d_r}V)
\end{align*}
Our plan is to define $\Delta^{k,r}(\mathbb{A}(V))$ to be the image of \begin{align*}
    ((\mathbb{C}^r)^k)^{\circ}\times V^k &\to ((\mathbb{C}^r)^k)^{\circ}\times(\prod_{i=1}^\infty \Sym^iV)^r
\\
(\overline{m_1},\ldots,\overline{m_k},v_1,\ldots,v_k)&\mapsto (\overline{m_1},\ldots,\overline{m_k},f_1(\overline{P},v_1,\ldots,v_k),\ldots,f_r(\overline{P},v_1,\ldots,v_k)).
\end{align*}
with an appropriate scheme-structure placed on it. Here again $z$ is used for bookkeeping, and the map actually extracts the coefficients of the $f_i(\overline{P},v_1,\ldots,v_k)$.

Our choice to define $\Delta^{k,r}(\mathbb{A}(V))$ in this way is a priori surprising since we are working with a locus in an infinite-dimensional vector space and it is unclear what scheme structure we would like to put on it. However, we will show that there is an $N$ such that no information is lost after ignoring all $\Sym^{N'}V$ for $N' \ge N$, and after doing so the resulting map will be finite. From here $\Delta^{k,r}(\mathbb{A}(V))$ will be easily shown to have the desired properties in light of \Cref{gelflem}. The key technical lemmas adapted from \cite{Kasatani,Sergeev,Sam} are as follows.

\begin{lem}
\label{naklem}
    Suppose that $R$ is a reduced finite-type algebra over $\mathbb{C}$, and $\mathcal{A}\subset R[x_1,\ldots,x_\ell]$ is a sequence of positive degree homogenous polynomials such that if all polynomials in $\mathcal{A}$ vanish then all $x_i$ are equal to zero. Then $R[x_1,\ldots,x_\ell]$ is a finite $R[\mathcal{A}]$-module, and in particular $R[\mathcal{A}]$ is a finitely-generated $R$-algebra.
\end{lem}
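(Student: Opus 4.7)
The plan is to combine a Hilbert Nullstellensatz argument with graded Nakayama. First I will show that the quotient $R[x_1,\ldots,x_\ell]/(\mathcal{A})$ is already a finite $R$-module; then I will lift $R$-module generators to $R[x_1,\ldots,x_\ell]$ and use the positive-grading of the elements of $\mathcal{A}$ to upgrade this to finiteness of $R[x_1,\ldots,x_\ell]$ as an $R[\mathcal{A}]$-module. The finite-generation of $R[\mathcal{A}]$ as an $R$-algebra will then follow formally from the Artin--Tate lemma.

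For the Nullstellensatz step, the hypothesis says precisely that the common $\mathbb{C}$-point vanishing locus of $\mathcal{A}$ on $\Spec R[x_1,\ldots,x_\ell]\cong \Spec R\times \mathbb{A}^\ell$ is contained in the zero section $\Spec R\times\{0\}$. Since $R$ is reduced and finite-type over $\mathbb{C}$, so is $R[x_1,\ldots,x_\ell]$; in particular it is a reduced Jacobson ring, so the radical of $I:=(\mathcal{A})$ is the intersection of the maximal ideals containing $I$. By the hypothesis each such maximal ideal also contains $(x_1,\ldots,x_\ell)$, hence $\sqrt{I}\supseteq (x_1,\ldots,x_\ell)$ and some uniform $x_i^N$ lies in $I$ for all $i$. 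Therefore $R[x_1,\ldots,x_\ell]/I$ is generated as an $R$-module by the finite set of monomials in the $x_i$ of degree less than $N$ in each variable.

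For the graded Nakayama step, lift this finite set of monomials to homogeneous elements $b_1,\ldots,b_r\in R[x_1,\ldots,x_\ell]$ (one may take the monomials themselves, with $b_1=1$ to cover degree zero). I claim the $b_i$ generate $R[x_1,\ldots,x_\ell]$ as an $R[\mathcal{A}]$-module, which I will verify by induction on degree: for any homogeneous $f$ write $f=\sum_i c_i b_i + \sum_j h_j a_{i_j}$ with $c_i\in R$ and $a_{i_j}\in \mathcal{A}$, and observe that because each $a_{i_j}$ is homogeneous of strictly positive degree, every $h_j$ is homogeneous of degree strictly less than $\deg f$, so the inductive hypothesis applies to each $h_j$. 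Finally, applying Artin--Tate to the chain $R\subseteq R[\mathcal{A}]\subseteq R[x_1,\ldots,x_\ell]$ (with $R$ Noetherian, the top finitely generated as an $R$-algebra, and the top a finite module over the middle) gives the second assertion.

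There is no serious obstacle: the argument is the standard reduction of a homogeneous finite-fiber statement to the Nullstellensatz plus graded Nakayama. The only point requiring mild care is the invocation of the Nullstellensatz, where one uses reducedness of $R$ (equivalently of $R[x_1,\ldots,x_\ell]$) so that the set-theoretic containment of vanishing loci upgrades directly to a containment of radical ideals rather than only after passage to the nilreduction.
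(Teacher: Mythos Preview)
Your argument is correct and is essentially the same as the paper's: both use the Nullstellensatz to show $\sqrt{(\mathcal{A})}\supseteq (x_1,\ldots,x_\ell)$ so that $R[x_1,\ldots,x_\ell]/(\mathcal{A})$ is a finite $R$-module, and then invoke graded Nakayama to deduce finiteness over $R[\mathcal{A}]$. Your write-up simply expands the paper's two-line sketch, making the Jacobson/reducedness hypothesis and the degree induction explicit, and adds the Artin--Tate step for the ``in particular'' clause that the paper leaves implicit.
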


\begin{proof}
By applying the graded Nakayama's Lemma to the graded ring $R[\mathcal{A}]$ and the graded $R[\mathcal{A}]$-module $R[x_1,\ldots,x_\ell]$, it suffices to show that $R[x_1,\ldots,x_\ell]/(\mathcal{A})$ is a finite $R$-module. But the hypotheses and the Nullstellensatz imply that the radical of $(\mathcal{A})$ is precisely $(x_1,\ldots,x_\ell)$.
\end{proof}
\begin{lem}
\label{newtlem}
For every $N$ we have
$$\mathbb{C}[\{p_i(y_1,\ldots,y_k,x_1,\ldots,x_k)\}_{1 \le i \le N}]=\mathbb{C}[\{e_i(y_1,\ldots,y_k,x_1,\ldots,x_k)\}_{1 \le i \le N}].$$
\end{lem}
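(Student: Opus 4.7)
The plan is to derive a weighted analogue of Newton's identity relating $p_n$ and $e_n$, from which the equality of the two subalgebras follows by induction.

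First I would introduce the formal generating series
\[
F(t) \;:=\; \prod_{j=1}^{k}(1-x_jt)^{y_j} \;\in\; \mathbb{C}[y_1,\ldots,y_k,x_1,\ldots,x_k][[t]].
\]
Expanding each factor via the formal binomial series $(1-x_jt)^{y_j}=\sum_{i\ge 0}(-1)^i\binom{y_j}{i}x_j^i t^i$ and collecting by total degree in $t$ identifies
\[
F(t) \;=\; \sum_{n\ge 0}(-1)^n e_n(y_1,\ldots,y_k,x_1,\ldots,x_k)\,t^n,
\]
so $F$ is essentially the generating series of the weighted $e_n$. Next I would compute $F'(t)$ directly by the product rule and use the geometric expansion $(1-x_jt)^{-1}=\sum_{m\ge 0}(x_jt)^m$ to get
\[
F'(t) \;=\; -\,F(t)\sum_{j=1}^{k} \frac{y_jx_j}{1-x_jt} \;=\; -\,F(t)\sum_{n\ge 1} p_n(y_1,\ldots,y_k,x_1,\ldots,x_k)\,t^{n-1}.
\]
Setting $e_0:=1$ and equating coefficients of $t^{n-1}$ on both sides of the identity $F'(t) = -F(t)\sum_{n\ge 1}p_n t^{n-1}$ then produces the weighted Newton's identity
\[
n\, e_n \;=\; \sum_{j=1}^{n} (-1)^{j-1}\, e_{n-j}\, p_j.
\]

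Both inclusions of $\mathbb{C}$-subalgebras now follow by induction on $n$. Since $n$ is invertible in $\mathbb{C}$ and the base case $e_1=p_1$ is immediate, solving the identity for $e_n$ expresses it as a $\mathbb{Q}$-polynomial in $p_1,\ldots,p_n$ and $e_1,\ldots,e_{n-1}$, so $e_n\in \mathbb{C}[p_1,\ldots,p_n]$; solving the same identity for $p_n$ (the $j=n$ term isolates $p_n$ up to the sign $(-1)^{n-1}$) symmetrically yields $p_n\in \mathbb{C}[e_1,\ldots,e_n]$. Restricting to $n\le N$ then gives the claimed equality. The argument has no real obstacle — it is the classical Newton-identity derivation with the $y_j$ playing the role of integer multiplicities, and the formal power series manipulations are valid because $F(t)$ has constant term $1$, so $F'/F$ and the coefficient extraction are well-defined in $\mathbb{C}[y_1,\ldots,y_k,x_1,\ldots,x_k][[t]]$.
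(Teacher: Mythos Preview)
Your proof is correct and takes a different route from the paper. The paper argues in one line: for positive integer weights $y_j$, the weighted $e_i$ and $p_i$ are the ordinary elementary symmetric polynomials and power sums in the repeated variables $(\underbrace{x_1,\ldots,x_1}_{y_1},\ldots,\underbrace{x_k,\ldots,x_k}_{y_k})$, so the classical Newton identities hold; since $\mathbb{N}^k$ is Zariski dense in $\mathbb{C}^k$ and both sides are polynomials in the $y_j,x_j$, the identities hold for all weights. You instead derive the weighted Newton identity $n\,e_n=\sum_{j=1}^n(-1)^{j-1}e_{n-j}p_j$ directly by computing the logarithmic derivative of $F(t)=\prod_j(1-x_jt)^{y_j}$. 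Your approach is self-contained and yields the explicit recursion without appealing to any density principle; the paper's approach is shorter and exemplifies a technique (specialize to integers, extend by Zariski density) used repeatedly elsewhere in the paper. Both are standard, and either is perfectly adequate here.
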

\begin{proof}
    The Newton identities hold in the weighted case when the weights are positive integers. Since $\mathbb{N}^k\subset \mathbb{C}^k$ is Zariski dense, they hold for all complex weights.
\end{proof}
\begin{lem}
\label{finitelem}
Let $\Spec R$ be either a point of $((\mathbb{C}^r)^k)^{\circ}$ or an affine open subset of $((\mathbb{C}^r)^k)^{\circ}$. Let $V\cong \mathbb{C}^n$ be an $n$-dimensional vector space, and $S$ be a polynomial ring over $R$ so that $\Spec S=\Spec R\times V^k$. Let $\mc{A}\subset S$ be the subset of polynomials given by pulling back the coordinate functions under
\begin{align*}
    \Spec R\times V^k\to \Sym^i V\\
    (\overline{m_1},\ldots,\overline{m_k},v_1,\ldots,v_k)&\mapsto e_i((\overline{m_1})_j,\ldots,(\overline{m_k})_j,v_1,\ldots,v_k)
\end{align*}
with $i \ge 1$ and $1 \le j \le n$. Then $S$ is a finite $R[\mathcal{A}]$-module and $R[\mathcal{A}]$ is a finitely generated $R$-algebra.
\end{lem}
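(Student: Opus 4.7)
The plan is to invoke Lemma \ref{naklem} applied to $S$ (viewed as $R[v_{l,s}]$ with $1 \leq l \leq k$, $1 \leq s \leq n$) and the set $\mathcal{A}$; this immediately yields both conclusions at once. The hypotheses of Lemma \ref{naklem} to check are that $\mathcal{A}$ consists of homogeneous polynomials of positive degree in the $v_{l,s}$ over $R$, and that their common vanishing locus in $V^k$ is the origin. Homogeneity is immediate from inspection of the formula for $e_i$: any coordinate pullback from $\Sym^i V$ is a polynomial of homogeneous degree $i \geq 1$ in the $v_{l,s}$, with coefficients in $R$.

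For the vanishing statement, the cleanest route is to apply Lemma \ref{newtlem} to replace the subalgebra generated by the coordinate pullbacks of the $e_i$'s with the subalgebra generated by the coordinate pullbacks of the weighted power sums
\[p_N((\overline{m_1})_j, \ldots, (\overline{m_k})_j, v_1, \ldots, v_k) = \sum_{l=1}^k (\overline{m_l})_j \, v_l^N \in \Sym^N V.\]
Fixing a basis $e_1, \ldots, e_n$ of $V$ and writing $v_l = \sum_s v_{l,s} e_s$, the coefficient of $e_s^N$ in $v_l^N$ is $v_{l,s}^N$, so vanishing of the coordinate pullbacks of the $p_N$ in particular forces
\[\sum_{l=1}^k (\overline{m_l})_j \, v_{l,s}^N = 0 \quad \text{for every } N \geq 1 \text{ and every } j, s.\]

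Now fix $s$ and group the indices $l$ by the common scalar value of $v_{l,s}$: say the distinct values are $c_1, \ldots, c_t \in \mathbb{C}$ with $L_u := \{l : v_{l,s} = c_u\}$. The above equation becomes $\sum_u c_u^N \bigl(\sum_{l \in L_u} \overline{m_l}\bigr) = \overline{0}$ in $\mathbb{C}^r$ for every $N \geq 1$. The Vandermonde matrix on the distinct nonzero $c_u$ has trivial kernel, so $\sum_{l \in L_u} \overline{m_l} = \overline{0}$ for each $u$ with $c_u \neq 0$. Since $\Spec R \subseteq ((\mathbb{C}^r)^k)^{\circ}$ by hypothesis, no nonempty subset of the $\overline{m_l}$ can sum to zero in $\mathbb{C}^r$; hence each such $L_u$ is empty, which forces $v_{l,s} = 0$ for all $l$. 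Varying $s$ gives $v_l = 0$ for all $l$, completing the verification of the hypothesis of Lemma \ref{naklem}.

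The main technical point is the bookkeeping translation between the $\Sym^N V$-valued expressions and their scalar coordinate pullbacks; once that is in place, the combination of Newton's identities, Vandermonde, and the no-vanishing-subset-sum hypothesis on $\overline{m}$ is what closes out the argument.
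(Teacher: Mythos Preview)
Your proof is correct and follows essentially the same strategy as the paper's: invoke Lemma~\ref{naklem}, use Lemma~\ref{newtlem} to pass from the weighted elementary symmetric polynomials to the weighted power sums, and then show that vanishing of all power sums forces $v_l=0$. For this last step the paper packages the equations $\sum_l \overline{m_l}\, v_l^N=0$ into the formal identity $\sum_l \overline{m_l}/(z-v_l)=(\sum_l \overline{m_l})/z$ and argues from the absence of poles at $z=v_l$, whereas you first project onto the scalar coordinates $v_{l,s}$ and then use Vandermonde; these are the same elementary fact in different dress, though your version handles the case $\dim V>1$ more transparently.
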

\begin{proof}
We will show that $R$ and $\mathcal{A}$ satisfy the hypothesis of \Cref{naklem}. Suppose that for some choice of $(\overline{m_1},\ldots,\overline{m_k})\in \Spec R$ and $v_1,\ldots,v_k\in V$ that $e_i((\overline{m_1})_j,\ldots,(\overline{m_k})_j,v_1,\ldots,v_k)$ vanishes for all $i\geq 1,1\leq j\leq r$. \Cref{newtlem} then formally implies that for all such $i,j$ we have
$$(\overline{m_1})_jv_1^i+\ldots + (\overline{m_k})_jv_k^i=\overline{0}.$$ In particular, this implies that
$$\frac{1}{z-v_1}\overline{m_1}+\ldots +\frac{1}{z-v_k}\overline{m_k}-\frac{1}{z}\sum \overline{m_i}=0$$ by expanding in powers of $\frac{1}{z}$.
But by hypothesis on $R$ we have $\sum_{i \in A} \overline{m_i} \ne 0$ for all subsets $A$, so if some $v_i \ne 0$ it is impossible for there not to be a pole at $z=v_i$.
\end{proof}

Now, we construct $\Delta^{k,r}(\mathbb{A}(V))$ as follows. Cover $((\mathbb{C}^r)^k)^{\circ}$ with basic affine opens $\Spec A_1,\ldots \Spec A_w$ in $(\mathbb{C}^r)^k$. By \Cref{finitelem}, for $N$ sufficiently large the images of 
\begin{align*}
\Spec A_i\times V^k \to \Spec A_i \times (V\times \ldots \times \Sym^{N'} V)^r
\end{align*}
are all closed subvarieties for $N'\geq N$ and are isomorphic to each other by projection. Taking $N'=N$, these varieties also clearly glue together compatibly over the overlaps. This gives a natural scheme structure to the image of
\begin{align*}
    ((\mathbb{C}^r)^k)^{\circ} \times V^k \to ((\mathbb{C}^r)^k)^{\circ}\times(V\times \ldots \times \Sym^N V)^r.
\end{align*}
The reduced fiber over any point in $\overline{P}\in((\mathbb{C}^r)^k)^{\circ}$ is then precisely $\Delta^{k,r}_{\overline{P}}(\mathbb{A}(V))$.
\begin{rmk}
\label{finitermk}
By construction $$((\mathbb{C}^r)^k)^{\circ} \times V^k \to \Delta^{k,r}_{\overline{P}}(\mathbb{A}(V))$$ is finite, and hence in particular proper. Also, a similar application of \Cref{newtlem} shows that for fixed $\overline{m_1},\ldots,\overline{m_k}$, $(v_1,\ldots,v_k)$ and $(v_1',\ldots,v_k')$ correspond to the same point if and only if
$$\frac{1}{z-v_1}\overline{m_1}+\ldots+\frac{1}{z-v_k}\overline{m_k}=\frac{1}{z-v_1'}\overline{m_1}+\ldots+\frac{1}{z-v_k'}\overline{m_k}.$$
\end{rmk}

\section{Interpolating colored incidence strata of affine varieties}
\label{arbitrary}
In this section we prove \Cref{maincolthm} and discuss the Euclidean topology of our families of interpolated incidence strata. To each affine variety $X$, we embed $X \subset \mathbb{A}(V_X)$ for some vector space $V_X$, and then take $\Delta^{k,r}(X)$ to be the image of 
\begin{align}
((\mathbb{C}^r)^k)^{\circ}\times X^k \hookrightarrow ((\mathbb{C}^r)^k)^{\circ}\times \mathbb{A}(V_X)^k\to \Delta^{k,r}(\mathbb{A}(V_X)). \label{compositemap}
\end{align}

\begin{proof}[Proof of \Cref{maincolthm}]
Note that the map defining $\Delta^{k,r}(X)$ is the composite of two proper maps, and hence $\Delta^{k,r}(X)$ is a closed subvariety of $\Delta^{k,r}(\mathbb{A}(V_X))$.

We first verify that the fibers are correct at $\overline{P}\in (\mathbb{Z}_{\ge 0}^r\setminus \{\overline{0}\})^k$. To do this, note that the embedding $X \hookrightarrow \mathbb{A}(V_X)$ induces an embedding $\prod \Sym^{d_i}X \hookrightarrow \prod \Sym^{d_i} \mathbb{A}(V_X)$ which identifies $\prod \Sym^{d_i}X$ with the subset of $\prod \Sym^{d_i} \mathbb{A}(V_X)$ where the points are constrained to lie on $X$. Therefore, the incidence strata $\Delta^{k,r}_{\overline{P}}(X)\subset \Delta^{k,r}_{\overline{P}}(\mathbb{A}(V_X))$ is simply the subset of $\Delta^{k,r}_{\overline{P}}(\mathbb{A}(V_X))$ where the points are constrained to lie on $X$. Hence by construction the fiber over $\overline{P}\in (\mathbb{Z}_{\ge 0}^r\setminus \{\overline{0}\})^k$ is indeed $\Delta^{k,r}_{\overline{P}}(X)$ as desired.

Now we verify functoriality. Given a map $\phi:X \to Y$, we define the induced map $\Delta^{k,r}(X) \to \Delta^{k,r}(Y)$ to be at the level of sets
$$(\prod_{i=1}^k(z-\overline{x_i})^{(\overline{m_i})_1},\ldots,(\prod_{i=1}^k(z-\overline{x_i})^{(\overline{m_i})_r})\mapsto (\prod_{i=1}^k(z-\phi(\overline{x_i}))^{(\overline{m_i})_1},\ldots,(\prod_{i=1}^k(z-\phi(\overline{x_i}))^{(\overline{m_i})_r}).$$
First we check that this is a morphism. Indeed, suppose $\phi$ was induced from some identically named map $\phi:\mathbb{A}(V_X)\to \mathbb{A}(V_y)$. Then it suffices to show that we have an induced morphism $\Delta^{k,r}(\mathbb{A}(V_X))\to \Delta^{k,r}(\mathbb{A}(V_Y))$ since the image of $\Delta^{k,r}(X)$ clearly lies inside $\Delta^{k,r}(Y)$. Writing $\overline{x_i}=\sum_jr_{i,j}v_j$ for basis vectors $v_1,\ldots,v_n$ of $V_X$ and letting $w_1,\ldots,w_m$ be a basis of $V_Y$, it suffices by \Cref{newtlem} to show for each $\ell$ that we can write all of the coefficients of the coordinates in the $\prod w_i^{\mu_i}$ basis of $$ (\overline{m_1})_s\phi(\overline{x_1})^\ell+\ldots+(\overline{m_k})_s\phi(\overline{x_k})^\ell$$ as linear combinations of the coefficients of the coordinates of expressions in $$\{ (\overline{m_1})_s\overline{x_1}^i+\ldots+(\overline{m_k})_s\overline{x_k}^i\}_{1 \le i < \infty}.$$
The result follows from the following two observations.
\begin{itemize}
    \item Every coefficient of $ (\overline{m_1})_s\phi(\overline{x_1})^\ell+\ldots+(\overline{m_k})_s\phi(\overline{x_k})^\ell$ is a linear combination of elements of the form $$\sum_t(\overline{m_t})_s r_{t,1}^{\lambda_1}\ldots r_{t,n}^{\lambda_n}.$$ 
    \item The expression $\sum_t(\overline{m_t})_s r_{t,1}^{\lambda_1}\ldots r_{t,n}^{\lambda_n}$  is a multiple of the coefficient of $v_{1}^{\lambda_1}\ldots v_{n}^{\lambda_n}$ in $ (\overline{m_1})_s x_1^{\sum \lambda_i}+\ldots+(\overline{m_k})_s x_k^{\sum \lambda_i}$.
\end{itemize}
As the morphisms at the level of sets are manifestly functorial, it follows that the assignment is functorial as desired.
\end{proof}

\subsection{A topological model for weighted incidence strata.}
\label{topologicalmodel}
In this subsection, we will use the Euclidean topology instead of the Zariski topology. For a variety $X$ we may topologically define a family of weighted configuration spaces on $X$ as $(((\mathbb{C}^r)^k)^{\circ}\times X^k)/\sim$, formed as the quotient of the space of tuples  $(\overline{m_1},\ldots,\overline{m_k},x_1,\ldots,x_k)\in ((\mathbb{C}^r)^k)^{\circ}\times X^k$ by the relation that $$(\overline{m_1},\ldots,\overline{m_k},x_1,\ldots,x_k) \sim (\overline{m_1},\ldots,\overline{m_k},x_1',\ldots,x_k')$$ if for each $x \in X$ we have
$$\sum_{i\text{ such that }x_i=x}\overline{m_i}=\sum_{i\text{ such that }x_i'=x}\overline{m_i}.$$

\begin{prop}
\label{toppropAn}
The underlying topology of $\Delta^{k,r}(X)$ agrees with the family of weighted configuration spaces $(((\mathbb{C}^r)^k)^{\circ}\times X^k)/\sim$ above.
\end{prop}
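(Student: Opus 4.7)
The plan is to exhibit the natural surjection
$$\pi\colon ((\mathbb{C}^r)^k)^{\circ}\times X^k \twoheadrightarrow \Delta^{k,r}(X)$$
from \eqref{compositemap} as a topological quotient by the relation $\sim$ in the Euclidean topology. Concretely, I would verify that $\pi$ is (i) continuous and surjective, (ii) closed in the Euclidean topology, and (iii) has fibers equal to the equivalence classes of $\sim$. Any continuous, surjective, closed map is automatically a topological quotient, so (i)--(iii) together yield the desired homeomorphism.

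Properties (i) and (ii) are immediate from properness. By construction the map $((\mathbb{C}^r)^k)^{\circ}\times \mathbb{A}(V_X)^k \to \Delta^{k,r}(\mathbb{A}(V_X))$ is finite: covering $((\mathbb{C}^r)^k)^{\circ}$ by basic affine opens and applying \Cref{finitelem} shows the corresponding ring extensions are module-finite. A finite morphism of finite-type $\mathbb{C}$-schemes is proper as a map of complex-analytic spaces, hence closed in the Euclidean topology. Precomposing with the closed embedding $X^k \hookrightarrow \mathbb{A}(V_X)^k$ and restricting the codomain to the closed subvariety $\Delta^{k,r}(X) \subset \Delta^{k,r}(\mathbb{A}(V_X))$ preserves closedness, giving (ii); surjectivity holds by the very definition of $\Delta^{k,r}(X)$ as the image of this composite.

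Property (iii) is the main point. By \Cref{finitermk}, two configurations $(\overline{m_1},\ldots,\overline{m_k},x_1,\ldots,x_k)$ and $(\overline{m_1},\ldots,\overline{m_k},x_1',\ldots,x_k')$ have the same image under $\pi$ if and only if the formal identity
$$\sum_{i=1}^k \frac{\overline{m_i}}{z-x_i} = \sum_{i=1}^k \frac{\overline{m_i}}{z-x_i'}$$
holds after expansion as a power series in $z^{-1}$ with coefficients in $\mathbb{C}^r\otimes \Sym^{\bullet} V_X$. I would then show that this rational-function identity is equivalent to the condition $\sum_{i:\,x_i=x}\overline{m_i}=\sum_{i:\,x_i'=x}\overline{m_i}$ for every $x\in X$, which is exactly the defining condition of $\sim$. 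Grouping indices sharing a common value of $x_i$ or $x_i'$ rewrites the identity as $\sum_{a\in S} c_a/(z-a)=0$ for some finite set $S\subset V_X$ of distinct points with residue vectors $c_a\in\mathbb{C}^r$; polynomial interpolation on $S$ (combined with matching coefficients of $z^{-n-1}$ inside $\mathbb{C}^r\otimes \Sym^n V_X$) forces each $c_a=0$, which is the desired equality of residues. This partial-fraction uniqueness step is the only genuine technical ingredient and is the step I expect to be the main (though quite mild) obstacle.
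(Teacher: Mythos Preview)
Your proposal is correct and follows essentially the same approach as the paper: both use properness of the map \eqref{compositemap} (hence closedness in the Euclidean topology) together with the identification of its fibers with the equivalence classes of $\sim$ via \Cref{finitermk} to exhibit $\pi$ as a topological quotient. The paper treats $X=\mathbb{A}(V)$ first and then deduces the general case, whereas you handle arbitrary $X$ directly; you also spell out the partial-fraction uniqueness argument that the paper only implicitly invokes when asserting that the condition in \Cref{finitermk} coincides with~$\sim$.
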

\begin{proof}We first consider the case $X=\mathbb{A}(V)$.
Note that the condition $\sim$ is identical to the condition in \Cref{finitermk}. Thus the topologically proper map (\Cref{finitermk})
$$((\mathbb{C}^r)^k)^{\circ}\times V^k \to \Delta^{k,r}(\mathbb{A}(V))$$ factors through the bijective continuous
$$(((\mathbb{C}^r)^k)^{\circ}\times V^k)/\sim \to \Delta^{k,r}(\mathbb{A}(V)).$$ This implies the bijective continuous map is also proper, and hence a homeomorphism as desired.

For arbitrary $X$, because the composite map \eqref{compositemap} defining $\Delta^{k,r}(X)$ is proper, $\Delta^{k,r}(X)$ is topologically identified with the quotient $(((\mathbb{C}^r)^k)^{\circ}\times X^k)/\sim$.
\end{proof}

\section{The constant $N_k$}
\begin{defn}
\label{Nkdefn}
    Define
    $$R_i=\mathbb{C}[m_1,\ldots,m_k][\{\frac{1}{\sum_{i \in S}m_i}\}][\{e_j(m_1,\ldots,m_k,x_1,\ldots,x_k)\}_{1 \le j \le i}]$$ and for fixed $P=(m_1,\ldots,m_k) \in (\mathbb{C}^k)^{\circ}$, define
    $$R_i(P)=\mathbb{C}[\{e_j(m_1,\ldots,m_k,x_1,\ldots,x_k)\}_{1 \le j \le i}].$$ Define $N_k,N_k(P)$ to be the first points at which \begin{align*}R_{N_k}&=R_{N_k+1}=\ldots\\R_{N_k(P)}(P)&=R_{N_k(P)+1}(P)=\ldots\end{align*}
    
\end{defn}
Note $N_k$ and $N_k(P)$ exist by \Cref{finitelem}. Our first goal will be to show that $N_k(P)$ agrees with the definition from the introduction as the affine embedding dimension of $\Delta^k_P(\mathbb{A}^1)$.
We start with a useful proposition.
\begin{prop}
\label{stablem}
    If $R_i=R_{i+1}$, then $R_i=R_{i+1}=R_{i+2}=\ldots$. Also, if $R_i(P)=R_{i+1}(P)$ then $R_i(P)=R_{i+1}(P)=R_{i+2}(P)=\ldots$.
\end{prop}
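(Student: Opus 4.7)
The plan is to exhibit a single $R_0$-linear derivation of $S := R_0[x_1,\ldots,x_k]$ that shifts power-sum indices up by one, and then apply it to the polynomial identity forced by the hypothesis $R_i = R_{i+1}$.

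Concretely, I would take the derivation
\[\tilde D \;:=\; \sum_{\ell=1}^k x_\ell^{\,2}\,\frac{\partial}{\partial x_\ell}\]
on $S$. Since $p_n = \sum_\ell m_\ell x_\ell^n$, a direct computation gives
\[\tilde D(p_n) \;=\; \sum_\ell x_\ell^{\,2}\cdot m_\ell\, n\, x_\ell^{\,n-1} \;=\; n\, p_{n+1}\]
for every $n\ge 1$. By \Cref{newtlem} we may identify $R_j = R_0[p_1,\ldots,p_j]$, so the hypothesis $R_i = R_{i+1}$ is equivalent to producing a polynomial $F \in R_0[t_1,\ldots,t_i]$ with $p_{i+1} = F(p_1,\ldots,p_i)$ in $S$.

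Applying $\tilde D$ to this identity and using the chain rule yields
\[(i+1)\,p_{i+2} \;=\; \tilde D(p_{i+1}) \;=\; \sum_{j=1}^i j\,(\partial_{t_j} F)(p_1,\ldots,p_i)\cdot p_{j+1}.\]
Every factor on the right lies in $R_i$: the coefficients $(\partial_{t_j} F)(p_1,\ldots,p_i)$ are polynomial expressions in $p_1,\ldots,p_i$ over $R_0$, each $p_{j+1}$ with $1\le j \le i-1$ equals one of $p_2,\ldots,p_i$, and the last term $p_{i+1}$ lies in $R_i$ by hypothesis. Since $i+1\ne 0$ in $\mathbb{C}$, this forces $p_{i+2}\in R_i$, and hence $R_{i+2}=R_{i+1}=R_i$. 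Iterating the one-step argument gives the full chain $R_i = R_{i+1} = R_{i+2} = \cdots$. The fiberwise statement for $R_i(P)$ follows from the identical calculation after specializing the $m_\ell$ to the coordinates of $P$, which is harmless because $\tilde D$ itself involves only the $x_\ell$.

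The only step requiring any ingenuity is locating the raising derivation $\tilde D$; once the identity $\tilde D(p_n)=n p_{n+1}$ is in hand, the rest of the argument is a single application of the chain rule.
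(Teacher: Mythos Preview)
Your proof is correct and takes essentially the same approach as the paper. The paper phrases the derivation as the substitution $x_u \mapsto x_u + \epsilon x_u^t$ and reads off the $\epsilon$-linear term (which for $t=2$ is exactly your $\tilde D$); the only cosmetic difference is that the paper allows general $t$ and writes down $p_{t+i}$ directly, whereas you fix $t=2$ and iterate, but both arguments are the same chain-rule computation.
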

\begin{proof}
    Note that \Cref{newtlem} allows us to consider the $p_i$'s stabilizing rather than the $e_i$'s. Let $p_j=p_j(m_1,\ldots,m_k,x_1,\ldots,x_k)$, and suppose we have an equation
    $$p_{i+1}=g(p_1,\ldots,p_i)$$ where $g$ is a polynomial with coefficients in $\mathbb{C}[m_1,\ldots,m_k][\{\frac{1}{\sum_{i \in S}m_i}\}]$ if we're working with $R_i$, or $\mathbb{C}$ if we're working with $R_i(P)$. In either case, substituting $x_u+\epsilon x_u^t$ into $x_u$ for every $1\le u \le k$ yields
    $$p_{i+1}+\epsilon (i+1)p_{t+i}\equiv g(p_1,\ldots,p_i)+\epsilon\sum_{j=1}^i (\partial_jg)(p_1,\ldots,p_i)jp_{t+j -1}\pmod{\epsilon^2}.$$
    Equating the $\epsilon$ terms then yields
    $$p_{t+i}=\frac{1}{i+1}\sum_{j=1}^i (\partial_jg)(p_1,\ldots,p_i)jp_{t+j-1}.$$
\end{proof}
Thus, to find $N_k$ or $N_k(P)$ it suffices to find the first point at which the corresponding rings start stabilizing.
\begin{lem}
\label{mingenlem}
If $f_1,f_2,\ldots,f_u$ is a sequence of homogenous polynomials in $\mathbb{C}[x_1,\ldots,x_\ell]$, such that no $f_i$ is redundant as a generator of $\mathbb{C}[f_1,\ldots,f_u]$, then $u$ is the minimum number of generators of $\mathbb{C}[f_1,\ldots,f_u]$ as a $\mathbb{C}$-algebra.
\end{lem}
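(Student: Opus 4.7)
The plan is to exploit the graded structure of $R := \mathbb{C}[f_1,\ldots,f_u]$ (viewed as a subring of the standard-graded ring $\mathbb{C}[x_1,\ldots,x_\ell]$) and reduce the statement to a linear-algebra computation via graded Nakayama. Let $\mathfrak{m} = \bigoplus_{d>0} R_d$ be the irrelevant ideal. Since $R$ is $\mathbb{N}$-graded with $R_0 = \mathbb{C}$, graded Nakayama tells us that a set of homogeneous positive-degree elements generates $R$ as a $\mathbb{C}$-algebra if and only if its image spans $\mathfrak{m}/\mathfrak{m}^2$. Consequently, the minimum number of $\mathbb{C}$-algebra generators of $R$ equals $\dim_\mathbb{C}(\mathfrak{m}/\mathfrak{m}^2)$, and it suffices to show that the classes $\overline{f_1},\ldots,\overline{f_u} \in \mathfrak{m}/\mathfrak{m}^2$ are linearly independent.

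Suppose for contradiction that we have a nontrivial relation $\sum_{i=1}^u c_i f_i \in \mathfrak{m}^2$ with $c_i \in \mathbb{C}$. Because $\mathfrak{m}/\mathfrak{m}^2$ is a graded vector space and each $f_i$ is homogeneous (of degree $d_i$), I can separate the relation by degree: for each $d$, we get $\sum_{i : d_i = d} c_i f_i \in (\mathfrak{m}^2)_d$. Pick a $d$ for which some $c_{i_0}$ with $d_{i_0} = d$ is nonzero. Every element of $(\mathfrak{m}^2)_d$ is a polynomial expression in the $f_j$'s each of whose monomials involves at least two factors of positive degree, hence only uses $f_j$'s with $d_j < d$. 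Solving for $f_{i_0}$ (using $c_{i_0} \neq 0$) expresses $f_{i_0}$ as a polynomial in the remaining $f_j$'s, contradicting the non-redundancy hypothesis.

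Combining, $\dim_\mathbb{C}(\mathfrak{m}/\mathfrak{m}^2) \geq u$, while the generating set $\{f_1,\ldots,f_u\}$ shows $\dim_\mathbb{C}(\mathfrak{m}/\mathfrak{m}^2) \leq u$. Hence equality holds, and the minimal number of $\mathbb{C}$-algebra generators of $R$ is exactly $u$.

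\textbf{Main obstacle.} The only subtle point is the degree bookkeeping: one has to remember that all the $f_i$ are homogeneous, so a relation in $\mathfrak{m}/\mathfrak{m}^2$ can be split degree-by-degree, and then the monomials appearing in $(\mathfrak{m}^2)_d$ use only $f_j$'s of strictly smaller degree. Once this is observed, the non-redundancy hypothesis immediately yields the contradiction, so no genuinely hard step remains.
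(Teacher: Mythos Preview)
Your argument is correct, and in fact the paper simply writes ``Omitted'' for this lemma, so there is no proof in the paper to compare against. Your graded-Nakayama approach is the standard way to establish such a statement: identifying the minimal number of generators with $\dim_{\mathbb{C}}(\mathfrak{m}/\mathfrak{m}^2)$ and then using the homogeneity of the $f_i$ to separate a putative linear relation by degree is exactly right.

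Two small remarks. First, you are implicitly using that each $f_i$ has positive degree; this is harmless, since a degree-$0$ polynomial is a scalar and hence automatically redundant. Second, when you pass from ``homogeneous generators correspond to spanning sets of $\mathfrak{m}/\mathfrak{m}^2$'' to ``the minimum number of \emph{arbitrary} generators equals $\dim_{\mathbb{C}}(\mathfrak{m}/\mathfrak{m}^2)$'', you should note that any (possibly inhomogeneous) generating set $g_1,\ldots,g_s$ can be replaced by $g_1-g_1(0),\ldots,g_s-g_s(0)\in\mathfrak{m}$, whose images still span $\mathfrak{m}/\mathfrak{m}^2$; hence $s\ge\dim_{\mathbb{C}}(\mathfrak{m}/\mathfrak{m}^2)$. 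With these clarifications the proof is complete.
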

\begin{proof}
Omitted.
\end{proof}
\begin{prop}
Given $P=(m_1,\ldots,m_k)\in (\mathbb{C}^k)^{\circ}$, $N_k(P)$ from \Cref{Nkdefn} equals
$$\min \{n \mid \Delta^k_P(\mathbb{A}^1)\text{ embeds into } \mathbb{A}^n\}.$$
\end{prop}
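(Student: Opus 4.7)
The plan is to prove both inequalities: the upper bound $\min\{n : \Delta^k_P(\mathbb{A}^1) \hookrightarrow \mathbb{A}^n\} \le N_k(P)$ is nearly immediate from the construction, while the lower bound relies on \Cref{mingenlem} combined with a weighted-homogeneity observation.

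For the upper bound, consider the map $\mathbb{A}^k \to \mathbb{A}^{N_k(P)}$ sending $(x_1,\ldots,x_k) \mapsto (e_1,\ldots,e_{N_k(P)})$, where $e_j = e_j(m_1,\ldots,m_k,x_1,\ldots,x_k)$. \Cref{finitelem} shows this map is finite, so its image is a closed subvariety whose reduced coordinate ring equals the subring $R_{N_k(P)}(P) \subseteq \mathbb{C}[x_1,\ldots,x_k]$. By the definition of $N_k(P)$ together with \Cref{stablem}, every $e_j$ (for all $j$) already lies in $R_{N_k(P)}(P)$, so this image is precisely $\Delta^k_P(\mathbb{A}^1)$. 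This exhibits an embedding of $\Delta^k_P(\mathbb{A}^1)$ into $\mathbb{A}^{N_k(P)}$.

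For the lower bound, I would use the standard fact that the minimal embedding dimension of an affine variety equals the minimum number of generators of its coordinate ring as a $\mathbb{C}$-algebra. Thus it suffices to show $R_{N_k(P)}(P) = \mathbb{C}[e_1,\ldots,e_{N_k(P)}]$ cannot be generated by fewer than $N_k(P)$ elements. Since each $e_j$ is homogeneous of degree $j$ in $x_1,\ldots,x_k$, \Cref{mingenlem} reduces the claim to checking that no $e_i$ with $1 \le i \le N_k(P)$ is redundant as a generator. The key step is weighted-homogeneous in nature: if $e_i = g(e_1,\ldots,\widehat{e_i},\ldots,e_{N_k(P)})$ for some polynomial $g$, then extracting the weighted-homogeneous component of $g$ of total weight $i$ (assigning weight $j$ to the slot corresponding to $e_j$) expresses $e_i$ purely in terms of $\{e_j : j < i\}$, since every $e_j$ with $j > i$ contributes $x$-degree exceeding $i$ and cannot appear. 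This would force $e_i \in R_{i-1}(P)$, contradicting $R_{i-1}(P) \subsetneq R_i(P)$ for $i \le N_k(P)$, which holds by \Cref{stablem} and the minimality built into the definition of $N_k(P)$.

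The only substantive bookkeeping is the weighted-homogeneity argument ruling out ``cancellation against higher-weight generators''; beyond this, everything follows directly from \Cref{finitelem}, \Cref{stablem}, and \Cref{mingenlem}. So there is no real obstacle, just a short homogeneity verification.
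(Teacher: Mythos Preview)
Your proof is correct and follows the same route as the paper: the paper's one-line argument simply asserts that \Cref{stablem} makes it clear the $e_i$ for $1\le i\le N_k(P)$ satisfy the non-redundancy hypothesis of \Cref{mingenlem}, and you have spelled out exactly why (the weighted-homogeneity step forcing $e_i\in R_{i-1}(P)$, contradicting the strict chain below $N_k(P)$). The upper bound and the identification of embedding dimension with minimal number of generators are left implicit in the paper but are handled correctly in your write-up.
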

\begin{proof}
It is clear from \Cref{stablem} that $\{e_i(m_1,\ldots,m_k,x_1,\ldots,x_k)\}_{1 \le i \le N_k(P)}$ satisfies the hypothesis of \Cref{mingenlem}.
\end{proof}

We now show that $k$-part $r$-colored incidence strata of varieties $X \subset \mathbb{A}^n$ are embeddedable in $\mathbb{A}^{r(\binom{n+N_k}{n}-1)}$.

\begin{prop}\label{smallembedprop}
   Given a subvariety $X \subset \mathbb{A}^n$ and $\overline{P}\in ((\mathbb{C}^r)^k)^{\circ}$, there is an embedding
    $$\Delta^{k,r}_{\overline{P}}(X)\subset \mathbb{A}^{r(\binom{n+N_k}{n}-1)}.$$
\end{prop}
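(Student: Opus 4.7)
The plan is to reduce to the universal case $X = \mathbb{A}(V)$ with $V = \mathbb{C}^n$, since the closed embedding $X \subset \mathbb{A}(V)$ induces a closed embedding $\Delta^{k,r}_{\overline{P}}(X) \subset \Delta^{k,r}_{\overline{P}}(\mathbb{A}(V))$ (as in Section~\ref{arbitrary}). By Section~\ref{DeltaAn}, $\Delta^{k,r}_{\overline{P}}(\mathbb{A}(V))$ arises as the finite image of $V^k$ inside $(\prod_{i=1}^N \Sym^i V)^r$ for $N$ sufficiently large. Since $\sum_{j=1}^{N_k}\dim \Sym^j V = \binom{n+N_k}{n}-1$ by the hockey-stick identity, the goal reduces to showing we may take $N = N_k$; via \Cref{newtlem} and the method of \Cref{finitelem}, this in turn reduces to the algebraic claim that the subalgebra of $\mathbb{C}[v_{i,s}]$ generated by the components of all $p_j^{(\ell)}(v) := \sum_i (\overline{m}_i)_\ell v_i^j$ (for $j \geq 1$ and $1 \leq \ell \leq r$) is already generated by those with $j \leq N_k$.

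The single-color, single-variable case is the defining stabilization of $R_{N_k}$ (\Cref{Nkdefn}), which yields a universal polynomial identity
\[
D(m)\cdot p_{N_k+1}(m,x) \;=\; \hat g\bigl(m,\,p_1(m,x),\ldots,p_{N_k}(m,x)\bigr)
\]
in $\mathbb{C}[m_1,\ldots,m_k,x_1,\ldots,x_k]$, where $D(m) = \prod_S \sum_{i\in S} m_i$ with $S$ ranging over nonempty subsets of $\{1,\ldots,k\}$. Applying a formal linear functional $\tau \in V^*$ to the $x_i$'s and matching the coefficient of $\tau^\beta$ for each $\beta$ with $|\beta| = N_k+1$ upgrades this to a component-wise identity for $(p_{N_k+1}(m,v))_\beta$ over $V$. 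For the colored case, substitute $m_i \leftarrow \mu_i(\lambda) := \langle \lambda,\overline{m}_i\rangle$ for a formal parameter $\lambda \in \mathbb{C}^r$: the hypothesis $\overline{P} \in ((\mathbb{C}^r)^k)^\circ$ forces each $\lambda \mapsto \langle \lambda,\sum_{i\in S}\overline{m}_i\rangle$ to be a nonzero linear form, so $(\mu_1(\lambda),\ldots,\mu_k(\lambda)) \in (\mathbb{C}^k)^\circ$ outside a finite union of hyperplanes. Since $p_j(\mu(\lambda),v) = \sum_\ell \lambda_\ell p_j^{(\ell)}(v)$, evaluating the identity at $r$ generic choices $\lambda^{(1)},\ldots,\lambda^{(r)}$ yields a linear system
\[
D(\mu(\lambda^{(q)}))\sum_\ell \lambda^{(q)}_\ell (p_{N_k+1}^{(\ell)})_\beta \;=\; H_q\bigl(\{(p_j^{(\ell')})_\alpha : j \leq N_k\}\bigr), \qquad q = 1,\ldots,r,
\]
whose coefficient matrix $\bigl(D(\mu(\lambda^{(q)}))\lambda^{(q)}_\ell\bigr)_{q,\ell}$ is invertible for generic choices; solving it expresses each $(p_{N_k+1}^{(\ell)})_\beta$ as a polynomial in the $j \leq N_k$ generators. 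Iterating this argument with $N_k+1$ replaced by arbitrary $j \geq N_k+1$ (using the higher-degree relations guaranteed by \Cref{stablem}) completes the claim.

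The principal delicacy is choosing $\lambda^{(1)},\ldots,\lambda^{(r)}$ so that $D(\mu(\lambda^{(q)})) \neq 0$ for every $q$ \emph{and} the coefficient matrix is invertible simultaneously; both are Zariski-open conditions on $(\lambda^{(1)},\ldots,\lambda^{(r)}) \in (\mathbb{C}^r)^r$, so generic choices work and the argument runs through without obstruction.
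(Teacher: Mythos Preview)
Your argument is correct and is essentially the paper's proof in different packaging: the paper observes that the subalgebra generated by the $p_j^{(\ell)}$ is invariant under the $GL_r$-action on the weight vectors $\overline{m_i}$, so one may assume without loss of generality that every coordinate projection $((\overline{m_1})_j,\ldots,(\overline{m_k})_j)$ lies in $(\mathbb{C}^k)^\circ$, after which the $N_k$ relation applies color-by-color. Your choice of $r$ generic functionals $\lambda^{(q)}$ with the invertible matrix $(\lambda^{(q)}_\ell)$ is exactly this change of basis unwound---the rows of the matrix $T\in GL_r$ are your $\lambda^{(q)}$'s, and your condition $D(\mu(\lambda^{(q)}))\neq 0$ is precisely the condition that the $q$-th coordinate of the transformed weights lies in $(\mathbb{C}^k)^\circ$---so the two arguments differ only in whether the linear-algebra step is done before or after invoking the universal relation.
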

\begin{proof}
    Write $\mathbb{A}^n=\mathbb{A}(V)$, and let $R=\Gamma(\Delta^{k,r}_{\overline{P}}(\mathbb{A}(V)))$. Since by construction $\Delta^{k,r}_{\overline{P}}(X)\subset \Delta^{k,r}_{\overline{P}}(\mathbb{A}(V)),$ it suffices to show that $$\Delta^{k,r}_{\overline{P}}(\mathbb{A}(V))\subset \mathbb{A}^{r(\binom{n+N_k}{n}-1)}.$$ By construction, we have
    $R$ is generated as a $\mathbb{C}$-algebra by the coordinate projections of all maps of the form
    $$(v_1,\ldots,v_k)\mapsto e_i((\overline{m_1})_j,\ldots,(\overline{m_k})_j,v_1,\ldots,v_k)\in \Sym^iV.$$
    \Cref{newtlem} formally implies that $R$ is also generated by the coordinate projections of all maps of the form
    $$(v_1,\ldots,v_k)\mapsto (\overline{m_1})_jv_1^i+\ldots+(\overline{m_k})_jv_k^i \in \Sym^iV.$$
    Written in this form it is clear that applying a common invertible linear transformation to all $\overline{m_i}$'s maps the algebra to precisely the same algebra, so we may apply such a map to assume without loss of generality that for every $j$ and $S\subset \{1,\ldots,k\}$ that 
    $$\sum_{i \in S}(\overline{m_i})_j \ne 0.$$
    
    But for each $j$ and $i > N_k$, we obtain by \Cref{stablem} a polynomial relation
    \begin{align*}e&_i((\overline{m_1})_j,\ldots,(\overline{m_k})_j,v_1,\ldots,v_k)\\
    &=g_i(e_1((\overline{m_1})_j,\ldots,(\overline{m_k})_j,v_1,\ldots,v_k),\ldots,e_{i-1}((\overline{m_1})_j,\ldots,(\overline{m_k})_j,v_1,\ldots,v_k)).
    \end{align*}
    Thus we only need to use polynomials up to $e_{N_k}$ to generate $R$ and the result now follows by counting the number of coordinate projections of such functions.
\end{proof}
\begin{rmk}
    For the case $r=1$ we could have used $N_k(P)$ instead of $N_k$.
    \end{rmk}
\begin{rmk}Following the proof of \Cref{smallembedprop}, it is clear that $\Delta^{k,r}_{\overline{P}}(X)$ is unchanged if we act by $GL_r$ on $\overline{P}$.
\end{rmk}

\begin{prop}
We have $N_2((1,2))=N_2=3$.
\end{prop}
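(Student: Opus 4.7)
The plan is to prove $N_2((1,2)) = N_2 = 3$ by matching lower and upper bounds.

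For the lower bound, the just-proved proposition identifies $N_2((1,2))$ with the minimum $n$ for which $\Delta^2_{(1,2)}(\mathbb{A}^1)$ embeds into $\mathbb{A}^n$. By \Cref{cubicex}, this stratum is the two-dimensional singular cubic discriminant hypersurface in $\mathbb{A}^3$. Any two-dimensional closed subvariety of $\mathbb{A}^2$ must equal the smooth $\mathbb{A}^2$ by irreducibility and dimension, so $\Delta^2_{(1,2)}(\mathbb{A}^1)$ cannot embed in $\mathbb{A}^2$, giving $N_2((1,2)) \ge 3$; the chain $N_2((1,2)) \le N_2$ from the introduction then yields $N_2 \ge 3$.

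For the upper bound $N_2 \le 3$, by \Cref{stablem} it suffices to show $R_3 = R_4$, i.e.\ that $p_4 = m_1 x_1^4 + m_2 x_2^4$ lies in $R_3$. I propose to exhibit the explicit identity
\[
p_4 \;=\; \frac{p_1^4 \,-\, 2\sigma\, p_1^2 p_2 \,+\, 4\mu\, p_1 p_3 \,+\, (\sigma^2 - 3\mu)\, p_2^2}{\mu\,\sigma}, \qquad \sigma := m_1 + m_2,\ \mu := m_1 m_2,
\]
which, once stated, is verified by expanding both sides in the basis $\{x_1^a x_2^{4-a}\}_{0\le a \le 4}$ and matching the five resulting coefficients -- a short elementary computation in $\mathbb{C}[m_1,m_2][x_1,x_2]$.

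To derive (as opposed to merely verify) the formula, I would perform the coordinate change $u = p_1$, $v = x_1 - x_2$, so that $\sigma x_1 = u + m_2 v$ and $\sigma x_2 = u - m_1 v$. Direct expansion gives $\sigma p_2 - u^2 = \mu v^2$ and $\sigma^2 p_3 - u^3 = 3\mu u v^2 + \mu(m_2-m_1) v^3$. Expanding $\sigma^3 p_4$ in $u, v$ produces only the monomials $u^4$, $u^2 v^2$, $u v^3$, $v^4$: the key point is that $v^4 = (v^2)^2$ requires only $\mu$ inverted (avoiding a spurious $1/(m_1-m_2)$), while $u v^3$ appears with coefficient $\mu(m_2-m_1)$, exactly what one can substitute from the $p_3$-expression. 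The main obstacle, which this coordinate change is designed to finesse, is that the naive recursion $p_{n+1} = s_1 p_n - s_2 p_{n-1}$ combined with solving the linear system for $s_1, s_2$ gives $p_4 = (\sigma p_3^2 - 2 p_1 p_2 p_3 + p_2^3)/(\sigma p_2 - p_1^2)$, whose denominator $\sigma p_2 - p_1^2 = \mu(x_1-x_2)^2$ is not invertible in the base ring $\mathbb{C}[m_1,m_2, 1/(m_1 m_2 \sigma)]$.
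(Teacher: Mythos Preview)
Your proof is correct. The upper bound is essentially the paper's: your identity
\[
\mu\sigma\,p_4 \;=\; p_1^4 - 2\sigma p_1^2 p_2 + 4\mu p_1 p_3 + (\sigma^2 - 3\mu)p_2^2
\]
is exactly the paper's relation $(m_1+m_2)p_4-4p_1p_3+3p_2^2=((m_1+m_2)p_2-p_1^2)^2$ rearranged (in fact the paper's printed relation is missing a factor of $m_1m_2$ on the left, which your version supplies), combined with \Cref{newtlem} and \Cref{stablem}. Your derivation via the change of variables $u=p_1$, $v=x_1-x_2$ is a nice addition not in the paper.

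The lower bound takes a genuinely different route. The paper argues directly at the level of the rings $R_i(P)$: with $m_1=1$, $m_2=2$ one checks by hand that $x_1^3+2x_2^3$ is not a polynomial in $x_1+2x_2$ and $x_1^2+2x_2^2$, so $R_2((1,2))\neq R_3((1,2))$ and hence $N_2((1,2))\ge 3$ by \Cref{newtlem}. You instead invoke the just-proved identification of $N_2((1,2))$ with the affine embedding dimension of $\Delta^2_{(1,2)}(\mathbb{A}^1)$ and then use the singularity of the cubic discriminant (\Cref{cubicex}) to rule out an embedding into $\mathbb{A}^2$. The paper's approach is more self-contained and purely algebraic; yours is more geometric and leans on the preceding proposition, but makes transparent \emph{why} three is forced, namely because the stratum is genuinely singular.
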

\label{N2}
\begin{proof}
    $N_2 \le 3$ by \Cref{newtlem}, \Cref{stablem} and the relation
    $$(m_1+m_2)p_4-4p_1p_3+3p_2^2=((m_1+m_2)p_2-p_1^2)^2.$$
    $N_2 \ge N_{2}((1,2))=3$ since taking $m_1=1$ and $m_2=2$, it is easy to see that there is no way of expressing $x_1^3+2x_2^3$ as a polynomial in $x_1+2x_2$ and $x_1^2+2x_2^2$, and we conclude by \cref{newtlem}.
\end{proof}
\begin{prop}
\label{isotrivprop}
$\Delta^{2}(\mathbb{A}^n)$ is isomorphic over $(\mathbb{C}^2)^{\circ}$ to the image of
$$(\mathbb{C}^2)^{\circ}\times\mathbb{A}^n\times \mathbb{A}^n \to (\mathbb{C}^2)^{\circ}\times \mathbb{C}^n \times \Sym^2\mathbb{C}^n \times \Sym^3\mathbb{C}^n$$
taking
$$(m_1,m_2,v_1,v_2)\mapsto (m_1,m_2,v_1,v_2^2,(m_1-m_2)v_2^3).$$
In particular, the family is isotrivial.
\end{prop}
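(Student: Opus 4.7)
The plan is to leverage the explicit construction of $\Delta^2(\mathbb{A}(V))$ from \Cref{DeltaAn}: by \Cref{N2} we have $N_2 = 3$, so with $V = \mathbb{C}^n$, $\Delta^2(\mathbb{A}(V))$ is realized as the image of
\[
\pi : (\mathbb{C}^2)^\circ \times V^2 \to (\mathbb{C}^2)^\circ \times V \times \Sym^2 V \times \Sym^3 V,\ (m_1, m_2, x_1, x_2) \mapsto (m_1, m_2, p_1, p_2, p_3),
\]
where $p_i = m_1 x_1^i + m_2 x_2^i$ (replacing elementary symmetric polynomials by power sums via \Cref{newtlem}). I would then exhibit explicit polynomial automorphisms over $(\mathbb{C}^2)^\circ$ of the source $V^2$ and of the target $V \times \Sym^2 V \times \Sym^3 V$ which conjugate $\pi$ into the map of the proposition.

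\medskip

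On the source, I would perform the weighted centroid substitution
\[
v_1 = \frac{m_1 x_1 + m_2 x_2}{m_1 + m_2}, \qquad v_2 = x_1 - x_2,
\]
which is an automorphism of $V^2$ over $(\mathbb{C}^2)^\circ$ since $m_1 + m_2$ is a unit there. The centroid choice forces the linear-in-$v_2$ terms of each $p_i$ to cancel, yielding by a direct expansion
\[
p_1 = (m_1+m_2) v_1,\ p_2 = (m_1+m_2) v_1^2 + \tfrac{m_1 m_2}{m_1+m_2} v_2^2,
\]
\[
p_3 = (m_1+m_2) v_1^3 + \tfrac{3 m_1 m_2}{m_1+m_2} v_1 v_2^2 - \tfrac{m_1 m_2 (m_1-m_2)}{(m_1+m_2)^2} v_2^3.
\]
On the target I would then define the triangular polynomial automorphism $\Phi$ of $V \times \Sym^2 V \times \Sym^3 V$ over $(\mathbb{C}^2)^\circ$ by
\[
P_1 \mapsto \tfrac{1}{m_1+m_2} P_1,\ P_2 \mapsto \tfrac{m_1+m_2}{m_1 m_2}\bigl(P_2 - \tfrac{1}{m_1+m_2} P_1^2\bigr),
\]
\[
P_3 \mapsto -\tfrac{(m_1+m_2)^2}{m_1 m_2}\bigl(P_3 - \tfrac{3}{m_1+m_2} P_1 P_2 + \tfrac{2}{(m_1+m_2)^2} P_1^3\bigr).
\]
This $\Phi$ is an automorphism since its diagonal rescalings $\tfrac{1}{m_1+m_2}$, $\tfrac{m_1+m_2}{m_1 m_2}$, $-\tfrac{(m_1+m_2)^2}{m_1 m_2}$ are all units on $(\mathbb{C}^2)^\circ$. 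Substituting the formulas above, $\Phi \circ \pi$ becomes exactly $(m_1, m_2, v_1, v_2) \mapsto (m_1, m_2, v_1, v_2^2, (m_1-m_2) v_2^3)$, giving the claimed isomorphism.

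\medskip

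For isotriviality, I would observe that this image splits as the product of $(\mathbb{C}^2)^\circ \times V$ (the $v_1$ factor, trivially constant in $(m_1, m_2)$) with the image of $(m_1, m_2, v_2) \mapsto (v_2^2, (m_1-m_2) v_2^3)$ in $(\mathbb{C}^2)^\circ \times \Sym^2 V \times \Sym^3 V$; over the open locus $\{m_1 \neq m_2\}$, further rescaling the $\Sym^3 V$ coordinate by the unit $\tfrac{1}{m_1-m_2}$ identifies this second factor with the constant family $v \mapsto (v^2, v^3)$, exhibiting the trivialization. The main obstacle is purely bookkeeping: the two explicit polynomial substitutions above. Both are valid precisely because $m_1$, $m_2$, and $m_1+m_2$ are units on $(\mathbb{C}^2)^\circ$.
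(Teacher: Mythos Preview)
Your proof is correct and follows essentially the same approach as the paper. Both arguments invoke \Cref{N2} and \Cref{newtlem} to present $\Delta^2(\mathbb{A}^n)$ via the power sums $p_1,p_2,p_3$, then perform a source change to the coordinates $(m_1x_1+m_2x_2,\,x_1-x_2)$ (you rescale the first by $\tfrac{1}{m_1+m_2}$, they do not) together with a triangular target change built from the same identities $(x_1-x_2)^2=\tfrac{1}{m_1m_2}\bigl((m_1+m_2)p_2-p_1^2\bigr)$ and $(m_1-m_2)(x_1-x_2)^3=-\tfrac{1}{m_1m_2}\bigl((m_1+m_2)^2p_3-3(m_1+m_2)p_1p_2+2p_1^3\bigr)$; the only cosmetic difference is that you apply the source change first and then the target change, while the paper does the opposite.
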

\begin{proof}
By \Cref{N2} and \Cref{newtlem}, $\Delta^2(\mathbb{A}^n)$ is isomorphic to the image of the map
$$(m_1,m_2,v_1,v_2)\mapsto(m_1,m_2,p_1(m_1,m_2,v_1,v_2),p_2(m_1,m_2,v_1,v_2),p_3(m_1,m_2,v_1,v_2)).$$
We first do a polynomial change of coordinates on the codomain to obtain the new map
$$(m_1,m_2,v_1,v_2)\mapsto (m_1,m_2,m_1v_1+m_2v_2,(v_1-v_2)^2,(m_1-m_2)(v_1-v_2)^3).$$
Indeed, observe that we have the following universal relations between $p_i(m_1,m_2,x_1,x_2)$ with $x_1,x_2$ indeterminates:
\begin{align*}
    (x_1-x_2)^2&=\frac{1}{m_1m_2}((m_1+m_2)p_2-p_1^2)\\
    (m_1-m_2)(x_1-x_2)^3&=-\frac{1}{m_1m_2}\left((m_1+m_2)^2p_3-3(m_1+m_2)p_1p_2+2p_1^3\right).
\end{align*}
Now, we may change coordinates on the domain with $v_1'=mv_1+nv_2$ and $v_2'=v_1-v_2$ (which is an invertible linear transformation as $m+n \ne 0$).
\end{proof}
\begin{rmk}
In \cite{Abdesselam1,Abdesselam2}, such $2$-part incidence strata were studied in $\mathbb{P}^n$ rather than $\mathbb{A}^n$. One can show along the lines of \Cref{notisotrivprop} that $2$-part incidence strata in $\mathbb{P}^n$ are in general not isomorphic to each other, despite being locally isomorphic by \Cref{isotrivprop}.
\end{rmk}
\begin{rmk}
It would be tempting to try to conclude that there are only two distinct isomorphism classes of two-part incidence strata on \emph{any} affine variety $V$ by its embedding into $\mathbb{A}^n$, however as the final change of coordinates in the proof is not an automorphism of $V\times V \subset \mathbb{A}^n \times \mathbb{A}^n$, we are not allowed to apply it.
\end{rmk}
\begin{rmk}
For $n=1$ we see that all two-part incidence strata on $\mathbb{A}^1$ are equal to $\Sym^2\mathbb{A}^1\cong \mathbb{A}^2$ if the parts are the same, and $\mathbb{A}^1 \times C$ if the parts are unequal, where $C$ is the cuspidal cubic $y^2=x^3$ in $\mathbb{A}^2$.
\end{rmk}
\begin{prop}
\label{notisotrivprop}
The family $\Delta^3(\mathbb{A}^n)$ is not iso-trivial over $(\mathbb{C}^3)^{\circ}$. In fact, if $m_1,m_2,m_3\in \mathbb{N}$ are distinct and $m_1',m_2',m_3'\in \mathbb{N}$ are distinct, such that $(m_1,m_2,m_3)$ is not a multiple of $(m_1',m_2',m_3')$ after possibly rearranging the coordinates of $(m_1,m_2,m_3)$, then $\Delta^3_{(m_1,m_2,m_3)}(\mathbb{A}^n) \not\cong \Delta^3_{(m_1',m_2',m_3')}(\mathbb{A}^n)$.
\end{prop}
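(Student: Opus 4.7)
The plan is to extract from $\Delta^3_{(m_1,m_2,m_3)}(\mathbb{A}^n)$ a local isomorphism invariant at the image of the big diagonal $\{v_1=v_2=v_3\}$, and to show that this invariant determines the unordered multiset $\{m_1,m_2,m_3\}$ up to simultaneous scaling.

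First, any isomorphism $\Phi: \Delta^3_{(m)}(\mathbb{A}^n) \to \Delta^3_{(m')}(\mathbb{A}^n)$ lifts by Zariski's main theorem to an isomorphism $\widetilde\Phi: (\mathbb{A}^n)^3 \to (\mathbb{A}^n)^3$ of the normalizations constructed in \Cref{maincolthm}. By \Cref{singthm}, the non-immersion locus of each normalization map is $\bigcup_{i<j} D_{ij}$ with $D_{ij}=\{v_i=v_j\}$, so $\widetilde\Phi$ must permute the three small diagonals, and in particular setwise preserve the triple intersection $D_{123}=D_{12}\cap D_{13}\cap D_{23}$. Hence $\Phi$ canonically identifies the images of the big diagonals in the two varieties.

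Focusing first on $n=1$, I would show that the local structure of $\Delta^3_{(m)}(\mathbb{A}^1)$ transverse to the big-diagonal stratum at any of its points is encoded by a canonical $2$-dimensional complete local ring $\widehat S_{(m)}$, whose maximal ideal is generated, in transverse slice coordinates $(a,b)$ on the hyperplane $p_1=0$ in $\mathbb{A}^3$, by a nondegenerate binary quadratic form
\[ P_2(a,b) = \tfrac{1}{m_3}\bigl[m_1(m_1+m_3)a^2 + 2m_1 m_2\,ab + m_2(m_2+m_3)b^2\bigr] \]
and a binary cubic $P_3(a,b)$ with explicit $m_i$-dependent coefficients (plus higher-order generators determined by these). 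The iso class of $(P_2,P_3)$ modulo $GL_2$-change of $(a,b)$ is then an invariant of $\widehat S_{(m)}$, equivalently the $PGL_2$-orbit of the unordered $5$-point configuration in $\mathbb{P}^1$ formed by the two roots of $P_2$ and the three roots of $P_3$. An explicit calculation using standard joint $SL_2$-invariants of a binary quadratic and a binary cubic (for instance, one recovers the symmetric functions $m_i m_j$ up to scalar directly from $P_2$, and then uses $P_3$ to break the remaining sign ambiguities after recalling $m_i\in\mathbb{N}$) would verify that this invariant recovers $\{m_1,m_2,m_3\}$ up to scaling and $S_3$-permutation. The general $n\geq 2$ case follows by restricting along any functorially induced embedding $\Delta^3_{(m)}(\mathbb{A}^1)\hookrightarrow \Delta^3_{(m)}(\mathbb{A}^n)$ coming from a line $\mathbb{A}^1\hookrightarrow\mathbb{A}^n$.

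The main obstacle will be justifying that $\widehat S_{(m)}$ is a genuine isomorphism invariant: the formal completion of $\Delta^3_{(m)}(\mathbb{A}^n)$ along the big-diagonal stratum $\mathcal{T}$ needs to canonically split as $\mathcal{T}\times\widehat S_{(m)}$, which is not automatic from an abstract variety isomorphism even though it holds at the level of the functorial construction. I would handle this by exploiting the natural $\Gm$-scaling action $v_i\mapsto \lambda v_i$ on the normalization, whose unique attracting fixed point is the image of the origin of $(\mathbb{A}^n)^3$ — this pins down a canonical basepoint on $\mathcal{T}$, and a Bia\l{}ynicki-Birula-style attractor decomposition of the complete local ring at this basepoint yields the desired canonical transverse factor $\widehat S_{(m)}$.
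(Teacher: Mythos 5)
Your overall strategy — lift to normalizations, use the singularity classification from Theorem~\ref{singthm} to locate canonical structure near the triple-diagonal cone point, and extract a local invariant there that remembers $\{m_1,m_2,m_3\}$ up to scaling — is the same strategy the paper uses. The paper's route to the local invariant is more elementary, though: after centering so the isomorphism $(f,g,h)$ of normalizations fixes the origin, it exploits the homogeneity of the generators of $\Gamma(\Delta^3_{(m)})$ to replace $(f,g,h)$ by their \emph{linear} parts (this is the step that resolves your ``main obstacle'' about canonicity); it then shows the linearized map has the special form $f_i=R(v_1,v_2,v_3)+L(v_i)$, picks an eigenvector $w_1$ of $L$, and reads off from an explicit identity involving $(m'_1+m'_2+m'_3)p_2-p_1^2$ that the products $\{m_1m_2,m_1m_3,m_2m_3\}$ must be proportional to $\{m'_1m'_2,m'_1m'_3,m'_2m'_3\}$. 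This is precisely the quadratic-form data you're after, but it arrives without invoking formal completions, a BB decomposition, or joint invariant theory of a quadratic and cubic pencil.

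There are genuine gaps in your version as written. First, the assertion that ``one recovers the symmetric functions $m_im_j$ up to scalar directly from $P_2$'' is false: all nondegenerate binary quadratics over $\mathbb{C}$ lie in a single $GL_2$-orbit, so $P_2$ alone carries no information. What \emph{does} carry information is the quadratic form together with the three distinguished lines in the transverse slice cut out by the small diagonals $D_{ij}$ — in a basis adapted to those lines, the coefficients of $P_2$ are $m_im_j$ up to a common scalar — or the $GL_2$-orbit of the pair $(P_2,P_3)$, but either way you must say which extra structure is being used and show it is preserved. Second, the canonicity of the transverse factor $\widehat S_{(m)}$ is not yet established by your $\mathbb{G}_m$ remark: the abstract isomorphism $\Phi$ is not a priori $\mathbb{G}_m$-equivariant, so ``the'' attractor decomposition is not automatically functorial for $\Phi$; you need an argument (as in the paper, via homogeneity of the generating invariants) that one may replace $\Phi$ by its weight-graded leading term without leaving the category of morphisms $\Delta^3_{(m)}\to\Delta^3_{(m')}$. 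Third, the reduction from general $n$ to $n=1$ ``by restricting along a line'' does not follow: an isomorphism of $\Delta^3_{(m)}(\mathbb{A}^n)$ has no reason to carry a chosen $\Delta^3_{(m)}(\mathbb{A}^1)\hookrightarrow\Delta^3_{(m)}(\mathbb{A}^n)$ into another such substratum. The paper avoids this by working in $\mathbb{A}^n$ throughout and only specializing coordinates at the end, after the linear structure theory of $L$ (in particular the existence of an eigenvector) makes the one-variable specialization legitimate.
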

\begin{proof}
Suppose that there was an isomorphism. Then there is an isomorphism of normalizations of $\Delta^3_{(m_1,m_2,m_3)}(\mathbb{A}^n)$ and $\Delta^3_{(m_1',m_2',m_3')}(\mathbb{A}^n)$ fitting into the diagram
\begin{center}
\begin{tikzcd}
\mathbb{A}^n\times\mathbb{A}^n\ar[r]\ar[d]\times\mathbb{A}^n&\Delta^n_{(m_1,m_2,m_3)}(\mathbb{A}^n)\ar[d]\\
\mathbb{A}^n\times\mathbb{A}^n\times\mathbb{A}^n\ar[r]&\Delta^n_{(m_1',m_2',m_3')}(\mathbb{A}^n).
\end{tikzcd}
\end{center}
For either incidence strata, the singular locus occurs when a point collision occurs, and the singular locus of the singular locus is when all 3 points collide. This corresponds to the image of the diagonal in $\mathbb{A}^n\times \mathbb{A}^n \times \mathbb{A}^n$, so the left vertical isomorphism $(v_1,v_2,v_2)\mapsto (f(v_1,v_2,v_3),g(v_1,v_2,v_3),h(v_1,v_2,v_3))$ must have the constant terms of $f,g,h$ all equal. Applying the translation automorphism to the $\mathbb{A}^n$ that $\Delta^3_{m_1',m_2',m_3'}(\mathbb{A}^n)$ is the incidence strata for, we may assume that $f(0)=g(0)=h(0)=0$. Letting $f',g',h'$ represent the inverse isomorphism, the constant terms of $f',g',h'$ are then also zero.

Now, the isomorphism implies that the maps
\begin{align*}&(z-v_1)^{m_1}(z-v_2)^{m_2}(z-v_3)^{m_3}\mapsto\\ &(z-f(v_1,v_2,v_3))^{m_1'}(z-g(v_1,v_2,v_3))^{m_2'}(z-h(v_1,v_2,v_3))^{m_3'},\text{ and }\\
&(z-v_1)^{m_1'}(z-v_2)^{m_2'}(z-v_3)^{m_3'}\mapsto\\
&(z-f'(v_1,v_2,v_3))^{m_1}(z-g'(v_1,v_2,v_3))^{m_2}(z-h'(v_1,v_2,v_3))^{m_3}
\end{align*}
are both morphisms.
Because the coefficients are all homogenous expressions in the coordinates of the coefficients of $(z-v_1)^{m_1}(z-v_2)^{m_2}(z-v_3)^{m_3}$, replacing each of $f,g,h$ with their lowest order homogenous terms (i.e. their linear terms since $(f,g,h)$ is an automorphism of $\mathbb{A}^n\times \mathbb{A}^n \times \mathbb{A}^n$) results in a morphism $\Delta^3_{(m_1,m_2,m_3)}(\mathbb{A}^n)\to \Delta^3_{(m_1',m_2',m_3')}(\mathbb{A}^n)$, and similarly replacing each of $f',g',h'$ with their lowest order terms yields an inverse morphism $\Delta^3_{(m_1',m_2',m_3')}(\mathbb{A}^n)\to \Delta^3_{(m_1,m_2,m_3)}(\mathbb{A}^n)$.
Thus, we may assume that $f,g,h$ are linear.

Now, as the singular locus occurs during a point collision, we must have that $v_i=v_j$ implies two of $f(v_1,v_2,v_3), g(v_1,v_2,v_3), h(v_1,v_2,v_3)$ are equal. Since $f,g,h$ are distinct linear forms, this implies that possibly after reordering $f,g,h$ (which we are allowed to do by reordering $m_1',m_2',m_3'$), there exist linear $R,L$ such that \begin{align*}
f(v_1,v_2,v_3)&=R(v_1,v_2,v_3)+L(v_1)\\ g(v_1,v_2,v_3)&=R(v_1,v_2,v_3)+L(v_2)\\ h(v_1,v_2,v_3)&=R(v_1,v_2,v_3)+L(v_3)
\end{align*}
As $(f,g,h)$ is a full rank linear map, $(f,g-f,h-f)$ is also a full rank linear map, so $L$ must be invertible. Let $w_1$ be an eigenvector for $L$, so $L(w_1)=\lambda w_1$ for some $\lambda \ne 0$. Extend $w_1$ to a basis $w_1,\ldots,w_n$ of $\mathbb{C}^n$ and write $v_i=\sum_j r_{i,j}w_j$. In this basis, the coordinate projections of the expression
\begin{align*}(m'+n'+p')p_2(f(v_1,v_2,v_3),g(v_1,v_2,v_3),h(v_1,v_2,v_3))\\-p_1(f(v_1,v_2,v_3),g(v_1,v_2,v_3),h(v_1,v_2,v_3))^2=\\
m_1'm_2'(L(v_1)-L(v_2))^2+m_1'm_3'(L(v_1)-L(v_3))^2+m_2'm_3'(L(v_2)-L(v_3))^2
\end{align*}
must be expressible as polynomials in the coordinate projections of $p_1(v_1,v_2,v_3)$ and $p_2(v_1,v_2,v_3)$. Setting all $r_{i,j}$ to be zero except for $x=r_{1,1}$, $y=r_{2,1}$, $z=r_{3,1}$, we see that there must exist $\lambda_1,\lambda_2$ such that $$m'n'(x-y)^2+m'p'(x-z)^2+n'p'(y-z)^2=\lambda_1(mx+ny+pz)^2+\lambda_2(mx^2+ny^2+pz^2).$$
For the right hand side to be zero when $x=y=z$ we need $\lambda_2=-\lambda_1$, and get
$$m'n'(x-y)^2+m'p'(x-z)^2+n'p'(y-z)^2=\lambda_1(mn(x-y)^2+np(x-z)^2+np(y-z)^2).$$ But this is only possible if $m'n',m'p',n'p'$ are proportional to $mn,mp,np$, which implies that $m',n',p'$ are constant multiples of $m,n,p$, a contradiction.
\end{proof}
We now show \Cref{HuntersConjecture} for $k \le 3$ and show $N_4((1,2,4,8))\ge 15$.
\begin{proof}[Proof of \Cref{Conjthm}]
These results were found by Magma computations \cite{Magma}. To show $N_3((1,2,4))\ge 7$ we show that there are no linear relations between degree $7$ products of polynomials of the form $x^i+2y^i+4z^i$, and to show $N_4 \ge 15$ we show that there are no linear relations between degree $15$ products of polynomials of the form $x^i+2y^i+4z^i+8w^i$. To show $N_3=7$, we used Magma to find all linear relations over $\mathbb{C}(m_1,m_2,m_3)$ between degree $15$ products of polynomials of the form $p_i(m_1,m_2,m_3,x_1,x_2,x_3)$ (the degree of $p_i$ being $i$). There is in fact a unique linear relation up to scaling, and after clearing the denominators the expression was of the form
$$(m_1+m_2+m_3)(m_1+m_2)(m_1+m_3)(m_2+m_3)m_1m_2m_3p_8=g(m_1,m_2,m_3,p_1,\ldots,p_7)$$
for some polynomial $g$. As $(m_1+m_2+m_3)(m_1+m_2)(m_1+m_3)(m_2+m_3)m_1m_2m_3$ is invertible we have a relation showing $R_7=R_8$, and hence \Cref{stablem} implies $N_3 \ge 7$. Thus $N_3=7$.
\end{proof}
\section{Explicit recursions for weighted power sums}
\label{explicitsect}
In this section, we sketch a construction of an explicit polynomial expressing $p_N$ in terms of the previous $p_i$ (which by \Cref{newtlem} is equivalent to relating $e_N$ to the previous $e_i$).

First, we will construct expressions realizing some power of $x_1$ to be inside the ideal $(p_1,\ldots,)\subset \mathbb{C}[m_1,\ldots,m_k,x_1,\ldots,x_k][\{\frac{1}{\sum_{i \in S}m_i}\}_S]$ by induction on $k$. 
Supposing we have an expression for $k$ variables 
$$x_1^{w_k}-g_1(\{m_i,x_i\}_{i=1}^{k})p_1(\{m_i,x_i\}_{i=1}^{k})+\ldots+g_{w_k}(\{m_i,x_i\}_{i=1}^{k})p_{w_k}(\{m_i,x_i\}_{i=1}^{k})=0,$$ we show how to construct a relation in $k+1$-variables
$$x_1^{w_{k+1}}-g_1'(\{m_i,x_i\}_{i=1}^{k+1})p_1(\{m_i,x_i\}_{i=1}^{k+1})+\ldots+g'_{w_{k+1}}(\{m_i,x_i\}_{i=1}^{k+1})p_{w_{k+1}}(\{m_i,x_i\}_{i=1}^{k+1})=0.$$

From the relation (denoting $p_i$ for $p_i(\{m_i,x_i\}_{i=1}^{k+1})$) $$\prod_{1 \le i < j \le k+1}(x_i-x_j)^2=\frac{1}{m_1\ldots m_{k+1}}\begin{bmatrix}p_0 & p_1 & \ldots & p_{k}\\ p_1 & p_2 & \ldots & p_{k+1}\\
\ldots & \ldots & \ldots&\ldots\\
p_{k}&p_{k+1}&\ldots&p_{2k}\end{bmatrix},$$
we see that such a relation in $k+1$ variables is equivalent to producing an expression
$$x_1^{w_{k+1}}-g_1'(\{m_i,x_i\}_{i=1}^{k+1})p_1(\{m_i,x_i\}_{i=1}^{k+1})+\ldots+g'_{w_{k+1}}(\{m_i,x_i\}_{i=1}^{k+1})p_{w_{k+1}}(\{m_i,x_i\}_{i=1}^{k+1})$$
divisible by $\prod_{1 \le i < j \le k+1}(x_i-x_j)^2$. To do this it suffices to find for every $1 \le i < j \le k+1$ such an expression divisible by $x_i-x_j$ as then we can multiply together the squares of all such expressions. So suppose we fix $1 \le i < j \le k+1$. We obtain such an expression for $k+1$ variables from the $k$-variable relation by replacing $p_\ell(\{m_i,x_i\}^k_{i=1})$ with $p_\ell(\{m_i,x_i\}^{k+1}_{i=1})$, replacing in $g_\ell$ the variables $m_1,\ldots,m_k$ with
$$m_1,\ldots,m_{i-1},m_i+m_j,m_{i+1},\ldots,m_{j-1},m_{j+1},\ldots,m_{k+1}$$
and replacing the variables
$x_1,\ldots,x_k$ in $g_\ell$ with $$x_1,\ldots,x_{j-1},x_{j+1},\ldots,x_{k+1}.$$
Indeed, if we let $x_i=x_j$ in such an expression, we obtain the $k$-variable expression with the $m_\ell$ and $x_\ell$ replaced with the above sets of variables. Thus the induction is complete. By permuting the variables we immediately obtain similar expressions for $x_2^{w_{k}},\ldots,x_{k+1}^{w_{k}}$.

By the above relations, we may now set up $w_k^k$ linear equations expressing $x_1$ times a monomial $x_1^{i_1}\ldots x_{k}^{i_{k}}$ with $0 \le i_1,\ldots,i_{k} < w_{k}$ as a linear combination of other such monomials with each coefficient a multiple of some $p_i$. Expressing this system of equations as a matrix, the Cayley-Hamilton theorem \cite[Theorem 10.2]{18705} then says that $x_1$ satisfies the characteristic polynomial of this matrix, yielding a monic polynomial in $x_1$ with coefficients in the algebra $$R=\mathbb{C}[m_1,\ldots,m_k][\{\frac{1}{\sum_{i \in S}m_i}\}_S][p_1,p_2,\ldots].$$ We similarly obtain such polynomials for $x_2,\ldots,x_k$, and multiplying these $k$ polynomials together yields a single monic polynomial $h$ with coefficients in $R$ satisfied by $x_1,\ldots,x_k$. The expression $m_1x_1h(x_1)+m_2x_2h(x_2)+\ldots+m_kx_kh(x_k)$ is then manifestly an expression for $p_{kw_{k}^k+1}$ in terms of the previous $p_i$.

This construction yields a polynomial of degree roughly $k!^{2k}$. If we had used the effective Nullstellensatz \cite[Theorem 1.3]{Jelonek} at the first stage to bound $w_k$ it would have given a bound of approximately $k!^k$ for $N_k(P)$ and $2^{k^2}k!^k$ for $N_k$, though the construction would be non-explicit.

\section{Singularities of colored incidence strata in smooth curves}
\label{singsect}
In this section, we prove \Cref{singthm}.

\begin{proof}[Proof of \Cref{singthm}]
Let $\overline{P}=(\overline{m_1},\ldots\overline{m_k})$, and denote
$$\overline{m_1}+\ldots+\overline{m_k}=(d_1,\ldots,d_r).$$ First we prove the result for $\CC=\mathbb{A}^1$.

Let $\overline{n_1},\ldots,\overline{n_t}$ denote the distinct $\overline{m_i}$, and suppose $\overline{n_i}$ appears $\ell_i$ times in $\overline{P}$. Then the normalization of $\Delta^{k,r}_{\overline{P}}(\mathbb{A}^1)$ is the morphism
\begin{align*}\Phi_{\overline{P}}:\Sym^{\ell_1}\mathbb{A}^1\times \ldots \times \Sym^{\ell_t}\mathbb{A}^1 &\to (\mathbb{A}^\infty)^r
\\
    (F_1,\ldots, F_t) &\mapsto (\prod_{s=1}^tF_s^{(\overline{n_s})_1},\ldots,\prod_{s=1}^tF_s^{(\overline{n_s})_r}).
\end{align*}
Indeed, by \Cref{finitelem}, the inclusion $\Gamma(\Delta^{k,r}_{\overline{P}}(\mathbb{A}^1))\subset \mathbb{C}[x_1,\ldots,x_k]$ is finite, and the map $\Phi_{\overline{P}}$ corresponds to the inclusion
$$\Gamma(\Delta^{k,r}_{\overline{P}}(\mathbb{A}^1))\subset \mathbb{C}[x_1,\ldots,x_k]^{S_{\ell_1}\times \ldots \times S_{\ell_t}},$$
which is thus also finite. It is furthermore birational as it is generically injective, and the domain is normal, so it is the normalization as desired.

Branches at $\overline{X}$ are in bijection with points of $\Phi_P^{-1}(\overline{X})$, so the description of the branches immediately follows. Now we check when $\phi_{\overline{P}}$ is an immersion at $\overline{Y}$, i.e. when $(d\Phi_P)_{\overline{Y}}$ is not of full rank.

Suppose $$Y=(F_1(z),\ldots,F_t(z))\in \Sym^{\ell_1}\mathbb{A}^1\times \ldots \times \Sym^{\ell_t}\mathbb{A}^1,$$ where $F_i=z^{\ell_i}+w_{i,1}z^{\ell_i-1}+\ldots+w_{i,\ell_i}$ is a degree $\ell_i$ monic polynomial in $z$. By taking the derivative with respect to each of the coefficients of each of the $F_i$ separately, the Jacobian of $\Phi_{\overline{P}}$ is seen to be given by the block matrix
$\begin{bmatrix}M_{i,j}\end{bmatrix}^T$ with $1 \le i \le t$, $1\le j \le r$,
where $M_{i,j}$ is the $\ell_i\times \infty$ matrix below (whose columns are indexed by powers of $z$)\footnote{Here we use the fact that taking the derivative with respect to a coefficient of an $F_i$ can be computed formally using the chain rule rather than expanding the series and computing the derivative of each coefficient of the resulting series separately.}. We will by abuse of notation use power series and infinite row vectors interchangeably, where the correspondence in one direction is given by listing the coefficients.
\begin{align*}M_{i,j}&=\begin{bmatrix}\partial/\partial w_{i,1}\prod_{s=1}^tF_s^{(\overline{n_s})_j}\\\ldots\\\partial/\partial w_{i,\ell_i}\prod_{s=1}^tF_s^{(\overline{n_s})_j}\end{bmatrix}=(\overline{n_i})_j\begin{bmatrix}z^{\ell_i-1}\left(F_i^{(\overline{n_i})_j-1}\prod_{s\ne i}F_s^{(\overline{n_s})_j}\right)\\z^{\ell_i-2}\left(F_i^{(\overline{n_i})_j-1}\prod_{s\ne i}F_s^{(\overline{n_s})_j}\right)\\ \dots \\ 1\cdot \left(F_i^{(\overline{n_i})_j-1}\prod_{s\ne i}F_s^{(\overline{n_s})_j}\right)\end{bmatrix}\\
&=(\overline{n_i})_j\begin{bmatrix}z^{\ell_i-1}(z^{d_i-\ell_i}+a_{i,1}z^{d_i-\ell_i-1}+\ldots)\\z^{\ell_i-2}(z^{d_i-\ell_i}+a_{i,1}z^{d_i-\ell_i-1}+\ldots)\\ \dots \\ 1\cdot (z^{d_i-\ell_i}+a_{i,1}z^{d_i-\ell_i-1}+\ldots)\end{bmatrix}\\
&=(\overline{n_i})_j
\begin{bmatrix}z^{d_i-1}&a_{i,1}z^{d_i-2}&a_{i,2}z^{d_i-3}&\ldots& a_{i,\ell_i-1}z^{d_i-\ell_i}&a_{i,\ell_i}z^{d_i-\ell_i-1}&\ldots\\0 &  z^{d_i-2}&a_{i,1}z^{d_i-3}&\ldots&a_{i,\ell_i-2}z^{d_i-\ell_i}& a_{i,\ell_i-1}z^{d_i-\ell_i-1}&\ldots\\ 0&0&z^{d_i-3}&\ldots&a_{i,\ell_i-3}z^{d_i-\ell_i}& a_{i,\ell_i-2}z^{d_i-\ell_i-1}&\ldots\\\ldots & \ldots& \ldots& \ldots&\ldots&\ldots&\ldots\\0 & 0 &\ldots&\ldots & z^{d_i-\ell_i}& a_{i,1}z^{d_i-\ell_i-1}&\ldots \end{bmatrix}.
\end{align*}
The rank of $d\Phi_{\overline{P}}$ is the same as the row rank of $[M_{i,j}]$. Multiplying each row of $M_{i,j}$ by $\left(\prod_{s=1}^tF_s^{(\overline{n_s})_j}\right)^{-1}$ is equivalent to right multiplication by an invertible (and upper triangular) $\infty\times\infty$ matrix (independent of $i$). Doing so does not change the $\mathbb{C}$-linear dependencies between the rows of the block matrix $[M_{i,j}]$. We see that
\begin{align*}
    &\left(\prod_{s=1}^tF_s^{(\overline{n_s})_j}\right)^{-1}M_{i,j}=(\overline{n_i})_j\begin{bmatrix}z^{\ell_i-1}/F_i\\z^{\ell_i-2}/F_i\\ \ldots \\ 1/F_i\end{bmatrix}\\
    &=(\overline{n_i})_j\begin{bmatrix}z^{\ell_i-1}(z^{-\ell_i}+b_{i,1}z^{-\ell_i-1}+\ldots)\\z^{\ell_i-2}(z^{-\ell_i}+b_{i,1}z^{-\ell_i-1}+\ldots)\\ \dots \\ 1\cdot(z^{-\ell_i}+b_{i,1}z^{-\ell_i-1}+\ldots)\end{bmatrix}\\
&=(\overline{n_i})_j
\begin{bmatrix}z^{-1}&b_{i,1}z^{-2}&b_{i,2}z^{-3}&\ldots& b_{i,\ell_i-1}z^{-\ell_i}&b_{i,\ell_i}z^{-\ell_i-1}&\ldots\\0 &  z^{-2}&b_{i,1}z^{-3}&\ldots&b_{i,\ell_i-2}z^{-\ell_i}& b_{i,\ell_i-1}z^{-\ell_i-1}&\ldots\\ 0&0&z^{d_i-3}&\ldots&b_{i,\ell_i-3}z^{-\ell_i}& b_{i,\ell_i-2}z^{-\ell_i-1}&\ldots\\\ldots & \ldots& \ldots& \ldots&\ldots&\ldots&\ldots\\0 & 0 &\ldots&\ldots & z^{-\ell_i}& b_{i,1}z^{-\ell_i-1}&\ldots \end{bmatrix}
\end{align*}
Hence the rows of the block matrix $\begin{bmatrix}M_{i,j}\end{bmatrix}$ are linearly dependent if and only if there is a $\mathbb{C}$-linear dependence between the vectors
$$\frac{z^{\ell_i-1}}{F_1}\overline{n_1},\frac{z^{\ell_1-2}}{F_1}\overline{n_1},\ldots,\frac{1}{F_1}\overline{n_1},\ldots,\frac{z^{\ell_t-1}}{F_t}\overline{n_t},\frac{z^{\ell_t-2}}{F_t}\overline{n_t},\ldots,\frac{1}{F_t}\overline{n_t},$$ i.e. there exist polynomials $G_i$ with $\deg(G_i)<\deg(F_i)$ and $\sum \frac{G_i}{F_i}\overline{n_i}=0$. Denote by $T$ the set of roots of all $F_i$. By partial fraction decomposition, we can write $$\{\frac{G_i}{F_i} \mid \deg(G_i)<\deg(F_i)\}=\bigoplus_{\substack{\alpha \in T, j \ge 1\\(z-\alpha)^j \mid f_i}}\mathbb{C}\frac{1}{(z-\alpha)^j},$$
and hence there is a linear dependency of this form if and only if there exists an $\alpha \in T$ and $j \ge 1$ such that $\{\overline{n_i} \mid (z-\alpha)^j\text{ divides }f_i\}$ has a linear dependency. Obviously if this occurs then it occurs when $j=1$ for some $\alpha \in T$, and the result now follows for $\Delta^{k,r}_{\overline{P}}(\mathbb{A}^1)$.

Now, let $\CC$ be an arbitrary smooth affine curve. By passing to an open subset we may assume that there is an \'etale morphism $$\Psi:\CC\to \mathbb{A}^1,$$ inducing a map $$\widetilde{\Psi}:\Delta^{k,r}_{\overline{P}}(\CC) \to \Delta^{k,r}_{\overline{P}}(\mathbb{A}^1).$$ We first show the result for $\overline{m_i} \in \mathbb{N}^k$. To do this it suffices to show that the formal completion of $\overline{X}\in \Delta^{k,r}_{\overline{P}}(\CC)$ maps isomorphically to the formal completion of $\widetilde{\Psi}(\overline{X})$. Indeed, in this case we may represent $\Delta^{k,r}_{\overline{P}}(\CC)$ as the quotient of a certain ordered colored incidence strata by $S_{d_1}\times \ldots \times S_{d_r}$. Let $$\Delta^{ord}_{\overline{P}}(\CC)\subset\CC^{d_1}\times\ldots \times \CC^{d_r}$$ be all ordered incidences of $d_i$ distinguishable points of color $i$ for each $i$ which correspond to an element of  $\Delta^{k,r}_{\overline{P}}(C)$, then $\Delta^{k,r}_{\overline{P}}(\CC)=\Delta^{ord}_{\overline{P}}(\CC)/S_{d_1}\times \ldots \times S_{d_r}$. Note that $\Delta^{ord}_{\overline{P}}(\CC)$ is simply the union of many diagonally embedded copies of $\CC^k$ encoding which distinguished points are incident. It is also clear that the map
$$\Delta^{ord}_{\overline{P}}(\CC) \to \Delta^{ord}_{\overline{P}}(\mathbb{A}^1)$$ induces an $S_{d_1}\times \ldots \times S_{d_r}$-equivariant isomorphism at the completion of any point in $\Delta^{ord}_{\overline{P}}(\CC)$. Let $I_\CC$ be the ideal of the $S_{d_1}\times \ldots \times S_{d_r}$-orbit corresponding to $\overline{X}$ in $\Delta^{ord}_{\overline{P}}(\CC)$, and define  the ideal $I_\CC$ similarly for $\Delta^{ord}_{\overline{P}}(\mathbb{A}^1)$.  By \cite[Lemma 2]{Knop}, completion at the ideal of a $S_{d_1}\times \ldots \times S_{d_r}$-orbit commutes with quotient by a finite group, and the result thus follows.

Now, we consider the general case. Embed $\CC\subset \mathbb{A}^n$ for some $n$.
Then we have identically to the $\mathbb{A}^1$ case that the normalization of $\Delta^{k,r}_{\overline{P}}$ is
$$\Sym^{\ell_1}\CC\times \ldots \times \Sym^{\ell_t}\CC \to \Delta^{k,r}_{\overline{P}}(\CC),$$
which under the Chow embeddings is represented by the map
$$(F_1(z),\ldots,F_t(z))\mapsto (\prod_{s=1}^tF_s^{(\overline{n_s})_1}(z),\ldots,\prod_{s=1}^tF_s^{(\overline{n_s})_r}(z))$$
where $F_i$ is a polynomial of the form $(z-\overline{v_1})\ldots(z-\overline{v_{\ell_i}})$, with $\overline{v_i}\in \CC\subset \mathbb{A}^n$, treated as an element of $$\mathbb{C}^n \times \Sym^2\mathbb{C}^n\times \ldots \times \Sym^{\ell_i}\mathbb{C}^n.$$
If we vary the $\overline{n_i}$ over distinct vectors in $\mathbb{C}^r$ such that $$(\underbrace{\overline{n_1},\ldots,\overline{n_1}}_{\ell_1},\ldots,\underbrace{\overline{n_t},\ldots,\overline{n_t}}_{\ell_t})\in ((\mathbb{C}^r)^k)^{\circ},$$ we obtain a family of maps from $\prod_{i=1}^t \Sym^{\ell_i}\CC$ to affine space. For fixed $\overline{Y}=(F_1,\ldots,F_t)$, the fiber-wise Jacobian matrix varies polynomially in the $\overline{n_i}$.

Let now $\overline{X}\in \Delta^{k,r}_{\overline{P}}(\CC)$ be a colored configuration and let $\overline{Y}\in \prod_{i=1}^{t}\Sym^{\ell_i}\CC$ map to $\overline{X}$ under the normalization map. Suppose the configuration $\overline{Y}$ is supported at the points $c_1,\ldots,c_u\in \CC$, such that the set of labels of points colliding at $c_i$, excluding repetition, is $\{\overline{n_j}\}_{j \in A_i}$ and $A_1 \sqcup \ldots \sqcup A_u=\{1,\ldots,t\}$. We show now that if for some fixed $i$, $\{\overline{n_j}\}_{j \in A_i}$ are linearly dependent, then the normalization map at $\overline{Y}$ is not an immersion. Indeed, as for fixed $F_1,\ldots,F_t$ the Jacobian depends polynomially on the $\overline{n_j}$, as the Jacobian drops rank whenever $\overline{n_i}\in \mathbb{N}^r$ satisfies $(\overline{n_j})_{j \in A_i}$ are linearly dependent, by \Cref{Zariskidense} this is also true when the $\overline{n_i}$ are arbitrary complex vectors.

\begin{clm}
\label{Zariskidense}
Let $\on{Mat}_{a,b}$ be the affine space of $a\times b$ matrices, where $a,b$ are positive integers. Let $\on{Mat}^k_{a,b}\subset \on{Mat}_{a,b}$ be the locus of matrices with rank at most $k$, where $k$ is a positive integer. Then the set of points $\on{Mat}^k_{a,b}\cap \on{Mat}_{a,b}(\mathbb{N})$ is Zariski-dense in $\on{Mat}^k_{a,b}$. 
\end{clm}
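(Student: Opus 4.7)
The plan is to exhibit $\on{Mat}^k_{a,b}$ as the image of a dominant morphism from an affine space whose $\mathbb{N}$-points are obviously Zariski-dense, and then invoke the standard fact that the image of a Zariski-dense set under a dominant morphism is Zariski-dense.

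First, consider the multiplication morphism
\[ \mu: \on{Mat}_{a,k} \times \on{Mat}_{k,b} \to \on{Mat}_{a,b},\qquad (A,B)\mapsto AB. \]
Every matrix $M\in \on{Mat}_{a,b}$ of rank at most $k$ factors as $M = AB$ with $A\in \on{Mat}_{a,k}$ and $B\in \on{Mat}_{k,b}$ (e.g.\ take a rank factorization and pad with zero columns/rows as needed), so $\mu$ has image exactly $\on{Mat}^k_{a,b}$. In particular, after corestricting, $\mu$ is surjective onto $\on{Mat}^k_{a,b}$, hence dominant onto the (irreducible) determinantal variety $\on{Mat}^k_{a,b}$.

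Next, $\mathbb{N}$ is infinite and hence Zariski-dense in the affine line, so $\mathbb{N}^{ak+kb}\subset \on{Mat}_{a,k}(\mathbb{C})\times \on{Mat}_{k,b}(\mathbb{C})$ is Zariski-dense. The elementary observation is that for any dominant morphism of varieties $f:X\to Y$ and any Zariski-dense subset $S\subset X$, the image $f(S)$ is Zariski-dense in $Y$: if a closed $Z\subset Y$ contains $f(S)$, then $f^{-1}(Z)$ is closed and contains $S$, hence equals $X$, forcing $Z\supseteq f(X)$ and therefore $Z = Y$ by dominance. Applying this to $\mu$ restricted to $\on{Mat}^k_{a,b}$, the image $\mu(\mathbb{N}^{ak+kb})$ is Zariski-dense in $\on{Mat}^k_{a,b}$. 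Since products of non-negative integer matrices are non-negative integer matrices, this image lies in $\on{Mat}^k_{a,b}\cap \on{Mat}_{a,b}(\mathbb{N})$, which therefore must itself be Zariski-dense.

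There is essentially no obstacle here; the only thing to check is the elementary lemma about dominant maps preserving Zariski-density of subsets, which is a one-line closure argument as above. The key geometric input is simply that the determinantal variety admits the surjective parametrization by a product of matrix spaces.
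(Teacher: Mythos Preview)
Your proof is correct and follows essentially the same approach as the paper: both exhibit $\on{Mat}^k_{a,b}$ as the image of a morphism from an affine space of matrices (where $\mathbb{N}$-points are obviously dense) and then push forward density along a dominant map. The only cosmetic difference is that the paper uses the parametrization $(P,Q)\mapsto P\Lambda Q$ with $P\in\on{Mat}_{a,a}$, $Q\in\on{Mat}_{b,b}$, and $\Lambda$ the fixed rank-$k$ idempotent, whereas you use the slightly leaner factorization $(A,B)\mapsto AB$ with $A\in\on{Mat}_{a,k}$, $B\in\on{Mat}_{k,b}$; the paper's map factors through yours.
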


\begin{proof}[Proof of \Cref{Zariskidense}]
Let $\Lambda$ be the $a\times b$ matrix of rank $k$ with $k$ 1's on its diagonal and all other entries zero. Then the product $\on{Mat}_{a,a}\cdot\Lambda\cdot\on{Mat}_{b,b}$ is exactly $\on{Mat}^k_{a,b}$ and since $\on{Mat}_{a,a}(\mathbb{N})\times \on{Mat}_{b,b}(\mathbb{N})$ is Zariski-dense in $\on{Mat}_{a,a}\times \on{Mat}_{b,b}$, the claim follows.
\end{proof}

To conclude, we must show that there are no other points where the Jacobian drops rank. Because $\CC \to \mathbb{A}^1$ is \'etale, the induced map
$$\Sym^{\ell_1}\CC\times \ldots \times \Sym^{\ell_r}\CC \to \Sym^{\ell_1}\mathbb{A}^1 \times \ldots \times \Sym^{\ell_r}\mathbb{A}^1$$
is \'etale and hence induces isomorphisms at the level of tangent spaces. Consider the following commutative diagram.
\begin{center}
\begin{tikzcd}
    \Sym^{\ell_1}\CC\times \ldots \times \Sym^{\ell_t}\CC\ar[r]\ar[d]&\Delta^{k,r}_{\overline{P}}(\CC)\ar[d]\\
    \Sym^{\ell_1}\mathbb{A}^1\times \ldots \times \Sym^{\ell_t}\mathbb{A}^1\ar[r]& \Delta^{k,r}_{\overline{P}}(\mathbb{A}^1)
\end{tikzcd}
\end{center}

If the bottom horizontal map has injective Jacobian at some point then the top horizontal map must have injective Jacobian at the corresponding point, and the result follows.
\end{proof}

\section{Isomorphisms between moduli spaces}
In this section, we apply \Cref{singthm} to show that two natural moduli spaces are not isomorphic, negatively answering a question of Farb and Wolfson \cite{Resultant}.

\begin{proof}[Proof of \Cref{notiso}]
By \cite{Resultant}, $Rat^*_{d,n}$ is isomorphic to the space of $(n+1)$-tuples of monic degree $d$ polynomials $(f_0,\ldots,f_{n})$ which share no common root. Thus $Rat^*_{d,n}$ and $Poly^{d(n+1),1}_{n+1}$ are open subsets of $\mathbb{A}^{d(n+1)}$ whose complements $(Rat^*_{d,n})^c$ and $(Poly^{d(n+1),1}_{n+1})^c$ are codimension $n$, which is at least 2 by assumption. By Hartog's Extension Theorem, an isomorphism $Rat^*_{d,n}\to Poly^{d(n+1),1}_{n+1}$ extends to a birational morphism $\mathbb{A}^{d(n+1)}\to \mathbb{A}^{d(n+1)}$, which must be an isomorphism since $\mathbb{A}^{d(n+1)}$ is integral and separated. Furthermore, the isomorphism $\mathbb{A}^{d(n+1)}\to \mathbb{A}^{d(n+1)}$ restricts to an isomorphism between their complements $(Rat^*_{d,n})^c\to (Poly^{d(n+1),1}_{n+1})^c$.

Let $\overline{P}$ be the partition of the vector $(d,\ldots,d)\in \mathbb{N}^{n+1}$ with parts
\begin{align*}\overline{P}:&(1,\ldots,1),(1,0,\ldots,0)^{d-1},(0,1,0,\ldots,0)^{d-1},\ldots,(0,0,\ldots,1)^{d-1}
\end{align*}
and let $Q$ be the partition of $(n+1)d$ with parts
\begin{align*}
Q:n+1,1^{(d-1)(n+1)}
\end{align*}
We now identify
\begin{align*}
(Rat^*_{d,n})^c&=\Delta^{(d-1)(n+1)+1,n+1}_{\overline{P}}(\mathbb{A}^1)\\
(Poly^{d(n+1),1}_{n+1})^c&=\Delta^{(d-1)(n+1)+1}_{Q}(\mathbb{A}^1).
\end{align*}
Now, as $d \ge 2$, from \Cref{singthm} we immediately conclude that $\Delta^{(d-1)(n+1)+1}_{Q}(\mathbb{A}^1)$ is singular in codimension $1$ when a point of multiplicity $1$ and $n+1$ collide, whereas $\Delta^{(d-1)(n+1)+1,n+1}_{\overline{P}}(\mathbb{A}^1)$ is first singular in codimension $n$, along the strata $\Delta^{(d-2)(n+1)+2,n+1}_{\overline{R}}(\mathbb{A}^1)$ where $\overline{R}$ is the vector partition
\begin{align*}
    \overline{R}:(1,\ldots,1)^2,(1,0,\ldots,0)^{d-2},(0,1,0,\ldots,0)^{d-2},\ldots,(0,0,\ldots,1)^{d-2}.
\end{align*}
\end{proof}

\section{Equivalence of Constructions and Deligne Categories}
\label{Dcategories}
In this section, we show that the constructions from \Cref{3Con} are equivalent, and in particular prove \Cref{comparison}.

\subsection{Equivalence of Construction 1 and Construction 2}
In this subsection we show that Constructions 1 and 2 from \Cref{3Con} are equivalent.

First we show the equivalence for $\mathbb{A}^n$, i.e. when $A=\mathbb{C}[x_1,\ldots,x_n]$.  Construction 2 at the level of rings yields the subalgebra of $\Gamma((\mathbb{C}^k)^{\circ})\otimes\mathbb{C}[\{x_{i,j}\}_{1 \le i \le k,1 \le j \le n}]$ generated by expressions of the form $$\sum_{i=1}^km_i\prod_{j=1}^n x_{i,j}^{s_j}$$ where $s_1,\ldots,s_n \in \mathbb{Z}_{\ge 0}$.

Let $v_1,\ldots,v_n$ be the standard basis vectors of $\mathbb{C}^n$, and write $\overline{x_i}=\sum x_{i,j}v_j$ for the vector in $\mathbb{C}^n$ with entries $x_{i,j}$ with $1 \le j \le n$. By \Cref{newtlem}, at the level of rings, Construction 1 yields the subalgebra of $\Gamma((\mathbb{C}^k)^{\circ})\otimes\mathbb{C}[\{x_{i,j}\}_{1 \le i \le k,1 \le j \le n}]$ generated by the coefficients in the $\prod v_i^{\mu_i}$-basis of expressions of the form
$$m_1\overline{x_1}^\ell+\ldots+m_k\overline{x_k}^\ell.$$ The subalgebra from Construction 1 is clearly contained in the subalgebra from Construction 2, and conversely the expression $\sum_{i=1}^km_i\prod_{j=1}^n x_{i,j}^{s_j}$ appears as a multiple of the $v_1^{s_1}\ldots v_n^{s_n}$-coefficient of $m_1\overline{x_1}^\ell+\ldots+m_k\overline{x_k}^\ell$ with $\ell=s_1+\ldots+s_n$, so the two constructions in fact yield the same subalgebra.

Finally, suppose that $\Spec A\subset \mathbb{A}^n$ is a variety with $A=\mathbb{C}[x_1,\ldots,x_n]/I$. Then Construciton 1 yields the image of the composite
$$(\mathbb{C}^k)^{\circ}\times(\Spec A)^k \hookrightarrow (\mathbb{C}^k)^{\circ}\times(\mathbb{A}^n)^k\to (\mathbb{C}^k)^{\circ}\times\prod_{i=1}^{N_k}\Sym^i\mathbb{C}^n.$$
which by \Cref{newtlem} corresponds to the subalgebra of $\Gamma((\mathbb{C}^k)^{\circ})\otimes A^{\otimes k}$ generated by the coefficients of $m_1\overline{x_1}^\ell+\ldots +m_k\overline{x_k}^\ell$. The same argument as above shows that this is the subalgebra generated by all expressions of the form $\sum_{i=1}^k m_i \prod_{j=1}^n x_{i,j}^{s_j}$, and as every element of $A$ is a $\mathbb{C}$-linear combination of monomials $x_1^{s_1}\ldots x_n^{s_n}$, this subalgebra is equivalently described as the subalgebra generated by elements $$\sum m_i(1^{\otimes i-1}\otimes a \otimes 1^{\otimes k-i}),$$
which is precisely Construction 2.

\subsection{Equivalence of Construction 2 and Construction 3}
In this subsection we prove \Cref{comparison}.

To do this, we need to unwind the construction in the Deligne category. We fix complex $m_1,\ldots,m_k\in\mathbb{C}\backslash \mb{Z}_{\ge 0}$ summing to $d\in\mathbb{C}\setminus \mathbb{Z}_{\ge 0}$ and let $I=I(m_1,\ldots,m_k)$. 

The Deligne category $\on{Rep}(S_d)$ is semi-simple with irreducible objects $X_\lambda$ parametrized by young tableaux $\lambda$. We write $\1=X_{\emptyset}$,  $\mathfrak{h}'$ for the $X_\lambda$ corresponding to the standard representation, and let $\mathfrak{h}=\1\oplus \mathfrak{h}'$ be the object corresponding to the permutation representation. The following fact will allow us to work with elements in algebras.

\begin{clm}
\label{equivclm}
The subcategory of $\on{Rep}(S_d)$ generated by $\1$ is equivalent to the category $\on{Vect}_k$ of finite-dimensional vector spaces given by the functor $\on{Hom}(\1, -)$. 
\end{clm}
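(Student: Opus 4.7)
The plan is to show that the full subcategory $\mc{C}$ of $\on{Rep}(S_d)$ whose objects are finite direct sums of $\1$ is equivalent to $\on{Vect}_k$ via the functor $F = \on{Hom}(\1, -)$. The essential input is that $\on{Rep}(S_d)$ is semisimple with $\1 = X_\emptyset$ one of its simple objects, which by a Schur-type argument forces $\on{End}(\1) = \mb{C}$.

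First I would verify that $F$ is essentially surjective: since $\on{Hom}(\1,-)$ commutes with finite direct sums in the second argument, $F(\1^{\oplus n}) = \on{Hom}(\1,\1)^{\oplus n} \cong \mb{C}^n$, and every finite-dimensional vector space is isomorphic to some $\mb{C}^n$. Next I would verify $F$ is fully faithful on $\mc{C}$. By additivity of $\on{Hom}$ in both variables, it suffices to identify
\[
\on{Hom}_{\on{Rep}(S_d)}(\1^{\oplus m}, \1^{\oplus n}) \cong \on{Mat}_{n\times m}(\on{End}(\1)) = \on{Mat}_{n\times m}(\mb{C}) \cong \on{Hom}_{\on{Vect}_k}(\mb{C}^m, \mb{C}^n),
\]
with the map induced by $F$ being the identity on matrices (it sends a morphism $\phi$ to the matrix whose $(i,j)$ entry is the composite $\1 \xrightarrow{\iota_j} \1^{\oplus m} \xrightarrow{\phi} \1^{\oplus n} \xrightarrow{\pi_i} \1$).

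The only mild subtlety is to pin down what ``generated by $\1$'' means. The natural interpretation is closure under finite direct sums (and retracts), but by semisimplicity of $\on{Rep}(S_d)$ together with simplicity of $\1$, every retract of $\1^{\oplus n}$ is again isomorphic to some $\1^{\oplus m}$, so these notions coincide. Closure under the tensor product is automatic as well, since $\1 \otimes \1 \cong \1$. The main obstacle in the argument is therefore just assembling the standard semisimplicity facts about $\on{Rep}(S_d)$ (namely $\on{End}(\1) = \mb{C}$ and Hom-orthogonality of distinct simples, which together give the matrix identification above); no category-theoretic technicalities beyond this are required.
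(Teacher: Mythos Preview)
Your proposal is correct and follows essentially the same approach as the paper: both arguments reduce to the single fact $\on{End}(\1)=\mb{C}$. The paper's proof is simply more terse, naming the inverse functor $V\mapsto V\otimes \1$ explicitly rather than verifying essential surjectivity and full faithfulness separately; your unpacking via matrices is the same content.
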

\begin{proof}
The inverse to $\on{Hom}(\1, -)$ takes a vector space $V$ to $V \otimes \1$, and the result follows since $\on{Hom}(\1,\1)=\mathbb{C}$.
\end{proof}

Therefore, if we have an algebra inside $\on{Ind}(\on{Rep}(S_d))$ which as an object is a direct sum of $\1$'s, then we may treat it as an honest $\mathbb{C}$-algebra by applying $\on{Hom}(\1, -)$. 
\begin{defn}
Given an object $B$ in $\on{Rep}(S_d)$ or $\on{Ind}(\on{Rep}(S_d))$, let $B^{S_d}$ denote the $\1$-isotypic component of $B$.
\end{defn}

Using the fact that $\on{Rep}(S_d)$ is semisimple, we see the functor $(-)^{S_d}$ has the property that if $B$ is an algebra in $\on{Rep}(S_d)$ or $\on{Ind}(\on{Rep}(S_d))$ and if $J$ is an ideal in $B$, then $J^{S_d}$ is an ideal in $B^{S_d}$, and $(B/J)^{S_d} \cong B^{S_d}/J^{S_d}$. Finally, we have $\on{Hom}(\1,B)\cong \on{Hom}(\1,B^{S_d})$ for any object $B$, and by \Cref{equivclm} we have $$\on{Hom}(\1,B/J)=\on{Hom}(\1,B)/\on{Hom}(\1,J)=\on{Hom}(\1,B^{S_d})/\on{Hom}(\1,J^{S_d}).$$
 \begin{proof}[Proof of \Cref{comparison}]
 
 We are interested in computing the algebra
 $$\on{Hom}(\1,A^{\otimes d}/I)=\on{Hom}(\1,(A^{\otimes d})^{S_d})/\on{Hom}(\1,I^{S_d}).$$
 We first characterize $I^{S_d}$.
 
Consider the diagram
\begin{center}
\begin{tikzcd}
A^{\otimes d}\ar[r, mapsto, "\on{Res}"]& A^{\otimes m_1}\boxtimes \ldots \boxtimes A^{\otimes m_k}& (A^{\otimes m_1})^{S_{m_1}}\boxtimes \ldots \boxtimes (A^{\otimes m_k})^{S_{m_k}}\ar[l]\\
(A^{\otimes d})^{S_d}\ar[u]\ar[r, mapsto, "\on{Res}"] & \on{Res}((A^{\otimes d})^{S_d})\ar[u]\ar[ur]&
\end{tikzcd}
\end{center}

Given a subobject $J'$ of a commuatative algebra $B$ in $\on{Rep}(S_d)$ or $\on{Ind}(\on{Rep}(S_d))$, the ideal $J''$ generated by $J'$ is the image of the map $B\otimes J'\to B\otimes B\to B$. 
\begin{clm}
\label{Resclm}
$I^{S_d}$ is the largest ideal $J'\subset (A^{\otimes d})^{S_d}$ such that $\on{Res}(J')\subset (J_{m_1})^{S_{m_1}}\boxtimes\ldots \boxtimes (J_{m_1})^{S_{m_k}}$.
\end{clm}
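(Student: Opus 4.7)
The plan is to show both containments: $I^{S_d}$ is an ideal of the indicated type, and any ideal $J' \subset (A^{\otimes d})^{S_d}$ with $\on{Res}(J') \subset (J_{m_1})^{S_{m_1}} \boxtimes \ldots \boxtimes (J_{m_k})^{S_{m_k}}$ is contained in $I^{S_d}$.

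For the first containment, I would use that since $\on{Rep}(S_d)$ is semisimple, the functor $(-)^{S_d}$ (taking $\1$-isotypic component) is exact and sends ideals to ideals. Thus $I^{S_d}$ is an ideal of $(A^{\otimes d})^{S_d}$. Since $\on{Res}$ is a $\mathbb{C}$-linear tensor functor taking $\1 \in \on{Rep}(S_d)$ into the analogous trivial object of $\on{Rep}(S_{m_1})\boxtimes\ldots\boxtimes\on{Rep}(S_{m_k})$, it carries the $\1$-isotypic component of any object into the $(\1\boxtimes\ldots\boxtimes\1)$-isotypic component. Applied to the inclusion $I \subset A^{\otimes d}$, this yields $\on{Res}(I^{S_d}) \subset (\on{Res}(I))^{S_{m_1}}\boxtimes\ldots\boxtimes\,^{S_{m_k}}$, which is contained in $(J_{m_1})^{S_{m_1}}\boxtimes\ldots\boxtimes (J_{m_k})^{S_{m_k}}$ by the defining property of $I$.

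For the reverse containment, given an ideal $J' \subset (A^{\otimes d})^{S_d}$ with $\on{Res}(J') \subset (J_{m_1})^{S_{m_1}}\boxtimes\ldots\boxtimes(J_{m_k})^{S_{m_k}}$, let $\widetilde{J}$ be the ideal of $A^{\otimes d}$ generated by $J'$ (viewing $J' \subset (A^{\otimes d})^{S_d} \subset A^{\otimes d}$). The plan is to show $\widetilde{J} \subset I$, from which $J' \subset \widetilde{J}^{S_d} \subset I^{S_d}$ follows immediately. Since $\on{Res}$ is an exact tensor functor, $\on{Res}(\widetilde{J})$ is the ideal in $\on{Res}(A^{\otimes d}) = A^{\otimes m_1}\boxtimes\ldots\boxtimes A^{\otimes m_k}$ generated by $\on{Res}(J')$. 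The key algebraic fact I need is that $J_{m_1}\boxtimes\ldots\boxtimes J_{m_k}$ is an \emph{ideal} in $A^{\otimes m_1}\boxtimes\ldots\boxtimes A^{\otimes m_k}$: this follows because for ideals $I_i \subset R_i$ in any commutative algebras (including in the Ind-Deligne-category setting), the tensor product $I_1\boxtimes\ldots\boxtimes I_k$ is stable under multiplication by any $r_1\boxtimes\ldots\boxtimes r_k$ factor-by-factor, hence by all of $R_1\boxtimes\ldots\boxtimes R_k$. Given this, since $\on{Res}(J') \subset J_{m_1}\boxtimes\ldots\boxtimes J_{m_k}$, the ideal it generates is also contained there, i.e. $\on{Res}(\widetilde{J}) \subset J_{m_1}\boxtimes\ldots\boxtimes J_{m_k}$, so $\widetilde{J}$ is one of the ideals appearing in the definition of $I$, hence $\widetilde{J} \subset I$.

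The main obstacle I anticipate is verifying rigorously the categorical manipulations in the Ind-completion of the Deligne category, particularly that $\on{Res}$ commutes with forming the ideal generated by a subobject (needing exactness and monoidal compatibility) and that the factor-wise argument showing $J_{m_1}\boxtimes\ldots\boxtimes J_{m_k}$ is an ideal really transports to the Deligne setting. Both are essentially formal consequences of $\on{Res}$ being an exact symmetric monoidal functor and the commutative algebra structures being defined by morphisms $A^{\otimes d}\otimes A^{\otimes d}\to A^{\otimes d}$, but they warrant care since we are not working with honest modules over honest rings.
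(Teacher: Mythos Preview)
Your proposal is correct and follows essentially the same argument as the paper: show $\on{Res}(I^{S_d})$ lands in the invariant part of $J_{m_1}\boxtimes\ldots\boxtimes J_{m_k}$, and for maximality generate an ideal $\widetilde{J}$ (the paper's $J''$) in $A^{\otimes d}$ from $J'$, use exactness of $\on{Res}$ to compute $\on{Res}(\widetilde{J})$ as the ideal generated by $\on{Res}(J')$, and conclude $\widetilde{J}\subset I$. Your added justification that $J_{m_1}\boxtimes\ldots\boxtimes J_{m_k}$ is itself an ideal is a point the paper leaves implicit.
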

\begin{proof}
We clearly have $\on{Res}(I^{S_d})\subset \on{Res}(I)\subset J_{m_1}\boxtimes \ldots \boxtimes J_{m_k}$, so taking invariants we see that $\on{Res}(I^{S_d})\subset(J_{m_1})^{S_{m_1}}\boxtimes\ldots \boxtimes (J_{m_1})^{S_{m_k}}$. We now show that $I^{S_d}$ is the largest ideal in $(A^{\otimes d})^{S_d}$ with this property.

Let $J'\subset (A^{\otimes d})^{S_d}$ be an ideal such that $\on{Res}(J')$ lies in $J_{m_1}\boxtimes\ldots\boxtimes J_{m_k}$, and let $J''$ be the $A^{\otimes d}$-ideal generated by $J'$. Since $\on{Res}$ is an exact fuctor it commutes with taking images, so we have $\on{Res}(J'')$ is the ideal generated by $\on{Res}(J')$. As $\on{Res}(J')\subset J_{m_1} \boxtimes \ldots\boxtimes J_{m_k}$, we deduce $\on{Res}(J'')\subset J_{m_1} \boxtimes \ldots \boxtimes J_{m_k}$, so $J'' \subset I$ by definition of $I$. In particular, this implies $J' \subset I^{S_d}$.
\end{proof}

Now, $\on{Res}$ is an equivalence of categories when restricted to the $1$-isotypic part of $\on{Rep}(S_d)$ and $\on{Rep}(S_{m_1})\boxtimes \ldots \boxtimes \on{Rep}(S_{m_k})$, so $$\on{Hom}(\1,(A^{\otimes d})^{S_d})=\on{Hom}(\1,\on{Res}((A^{\otimes d})^{S_d}))$$
and therefore the ideal $\on{Hom}(\1,I^{S_d})\subset \on{Hom}(\1,(A^{\otimes d})^{S_d})$ is the kernel of the  product of the multiplication maps
\begin{align*}
    \on{Hom}(\1,A^{\otimes d})&=\on{Hom}(\1,\on{Res}(A^{\otimes d}))\\&\to \on{Hom}(\1,(A^{\otimes m_1})^{S_{m_1}})\otimes \ldots \otimes \on{Hom}(\1,(A^{\otimes m_k})^{S_{m_k}})\to A\otimes\cdots\otimes A.
\end{align*}

By \cite[Proposition 4.1]{E14}, we have
$$\on{Hom}(\1,A^{\otimes d})\cong S(A)/(1_A=d),$$
where $S(A)$ is the symmetric algebra generated by $A$. The relation $(1_A=d)$ indicates the unit $1_A\in A$ is identified with $d\in A^{0}\cong \mb{C}$. We first explicitly describe the map $$S(A)/(1_A=d) \to S(A)/(1_A=m_1)\otimes \ldots \otimes S(A)/(1_A=m_k).$$

To do this, we note that $A^{\otimes d}$ is canonically the quotient of the tensor algebra $T(A \otimes \mathfrak{h})$ by certain relations \cite[Proposition 4.3]{E14}. We claim that $(A^{\otimes d})^{S_d}$ is generated by the invariant generators arising from the composite
$$\overline{a}:\1 \xrightarrow{a \otimes \on{Id}} A \otimes \1 \to A \otimes \mathfrak{h} \to T(A\otimes \mathfrak{h})\to A^{\otimes d}.$$

Indeed, we know that $\on{Hom}(\1,(A^{\otimes d}))\cong S(A)/(1_A=d)$, and under this isomorphism the elements of $A\subset S(A)/(1_A=d)$ correspond to precisely to the homorphisms described above.

Now, we have the diagram
\begin{center}
\begin{tikzcd}
A^{\otimes d} \ar[r,mapsto,"Res"]& A^{\otimes m_1}\boxtimes \ldots \boxtimes A^{\otimes m_k}\\
A\otimes \mathfrak{h}\ar[u]\ar[r,mapsto,"Res"]&A\otimes (\mathfrak{h}_1\oplus \ldots \oplus \mathfrak{h}_k)\ar[u]\\
A\otimes \1 \ar[r,mapsto,"Res"]\ar[u]& A\otimes (\1\oplus \ldots \oplus \1)\ar[u]
\end{tikzcd}
\end{center}
where by $\mathfrak{h}_i$ we really mean $\1\boxtimes \ldots \boxtimes \1 \boxtimes \mathfrak{h}_i\boxtimes \1 \boxtimes \ldots \boxtimes \1$ where $\mathfrak{h}_i$ is in the $i$'th place. Thus the map
$$S(A)/(1_A=d)\to S(A)/(1_A=m_1)\otimes \ldots \otimes S(A)/(1_A=m_k)$$
is given on generators by
$$a\mapsto (a\otimes 1 \otimes \ldots \otimes 1)+(1 \otimes a \ldots \otimes 1)+\ldots+(1 \otimes 1 \otimes \ldots \otimes a).$$
Now, we consider the multiplication map
$A^{\otimes m_i}\to A\otimes \1$. By definition, this is given by factoring the map $T(A\otimes \mathfrak{h})\to A\otimes\1$ through $A^{\otimes m_i}$, where the map is given by setting for each $j$
$$(A\otimes \mathfrak{h})^{\otimes j}\to A\otimes \1$$
induced by the maps $A^{\otimes j}\to A$ and $\mathfrak{h}^{\otimes j}\to \1$ given by the basis element in $\mb{C}P_{j,0}$ \cite[Definition 2.11]{CO11} corresponding to the finest partition of $j$.

The multiplication maps induce algebra maps $S(A)/(1_A=m_i)\to A$ which we want to determine at the level of generators. Given a generator $a \in A\subset S(A)$, we compute the map to $A$ as arising from the composite
$\1 \to A\otimes \1 \to A \otimes \mathfrak{h}\to A \otimes \1$
where the composite map is $a$ on the first coordinate, and $\1 \to \mathfrak{h} \to \1$ on the second coordinate, which is multiplication by $m_i$ \cite[Definition 2.11]{CO11}. Therefore, the map of algebras, $S(A)/(1_A=m_i)\to A$ takes each generator $a \in A\subset S(A)$ to $m_ia \in A$.

Therefore, the composite map $$S(A)/(1_A=d)\to S(A)/(1_A=m_1)\otimes \ldots \otimes S(A)/(1_A=m_k) \to \underbrace{A \otimes \ldots \otimes A}_k$$
takes $$a \mapsto m_1(a\otimes 1 \otimes \ldots \otimes 1)+m_2(1 \otimes a \otimes \ldots \otimes 1)+\ldots +m_k(1 \otimes 1 \otimes \ldots \otimes a),$$
and the quotient $\on{Hom}(\1,(A^{\otimes d})^{S_d})/\on{Hom}(\1,I^{S_d})$ is therefore isomorphic to the subalgebra of $A^{\otimes k}$ generated by such elements, which is precisely Construction 2.
\end{proof}
\bibliography{refs}

\begin{thebibliography}{KMSV06}

\bibitem[AC06]{Abdesselam1}
A.~Abdesselam and J.~Chipalkatti.
\newblock The bipartite {B}rill-{G}ordan locus and angular momentum.
\newblock {\em Transform. Groups}, 11(3):341--370, 2006.

\bibitem[AC07]{Abdesselam2}
A.~Abdesselam and J.~Chipalkatti.
\newblock Brill-{G}ordan loci, transvectants and an analogue of the {F}oulkes
  conjecture.
\newblock {\em Adv. Math.}, 208(2):491--520, 2007.

\bibitem[BCES16]{Sam}
Aaron Brookner, David Corwin, Pavel Etingof, and Steven~V. Sam.
\newblock On {C}ohen-{M}acaulayness of {$S_n$}-invariant subspace arrangements.
\newblock {\em Int. Math. Res. Not. IMRN}, (7):2104--2126, 2016.

\bibitem[BCP97]{Magma}
Wieb Bosma, John Cannon, and Catherine Playoust.
\newblock The {M}agma algebra system. {I}. {T}he user language.
\newblock {\em J. Symbolic Comput.}, 24(3-4):235--265, 1997.
\newblock Computational algebra and number theory (London, 1993).

\bibitem[Cay57]{Cayley}
Arthur Cayley.
\newblock A memoir on the conditions for the existence of given systems of
  equalities among the roots of an equation.
\newblock {\em Philosophical Transactions of the Royal Society of London},
  147:727--731, 1857.

\bibitem[Chi03]{Chipalkatti1}
Jaydeep~V. Chipalkatti.
\newblock On equations defining coincident root loci.
\newblock {\em J. Algebra}, 267(1):246--271, 2003.

\bibitem[CO11]{CO11}
Jonathan Comes and Victor Ostrik.
\newblock On blocks of {D}eligne's category {$\underline{\rm Re}{\rm p}(S_t)$}.
\newblock {\em Adv. Math.}, 226(2):1331--1377, 2011.

\bibitem[Del07]{D07}
P.~Deligne.
\newblock La cat\'{e}gorie des repr\'{e}sentations du groupe sym\'{e}trique
  {$S_t$}, lorsque {$t$} n'est pas un entier naturel.
\newblock In {\em Algebraic groups and homogeneous spaces}, volume~19 of {\em
  Tata Inst. Fund. Res. Stud. Math.}, pages 209--273. Tata Inst. Fund. Res.,
  Mumbai, 2007.

\bibitem[EA14]{EA14}
Inna Entova~Aizenbud.
\newblock On representations of rational {C}herednik algebras of complex rank.
\newblock {\em Represent. Theory}, 18:361--407, 2014.

\bibitem[ER16]{Etingof}
Pavel Etingof and Eric Rains.
\newblock On {C}ohen-{M}acaulayness of algebras generated by generalized power
  sums.
\newblock {\em Comm. Math. Phys.}, 347(1):163--182, 2016.
\newblock With an appendix by Misha Feigin.

\bibitem[Eti14]{E14}
Pavel Etingof.
\newblock Representation theory in complex rank, {I}.
\newblock {\em Transform. Groups}, 19(2):359--381, 2014.

\bibitem[FW16]{Resultant}
Benson Farb and Jesse Wolfson.
\newblock Topology and arithmetic of resultants, {I}.
\newblock {\em New York J. Math.}, 22:801--821, 2016.

\bibitem[GKZ08]{Gelfand}
I.M. Gelfand, M.~Kapranov, and A.~Zelevinsky.
\newblock {\em Discriminants, Resultants, and Multidimensional Determinants}.
\newblock Modern Birkh{\"a}user Classics. Birkh{\"a}user Boston, 2008.

\bibitem[Gro66]{EGAIV3}
A.~Grothendieck.
\newblock \'el\'ements de g\'eom\'etrie alg\'ebrique. {IV}. \'etude locale des
  sch\'emas et des morphismes de sch\'emas. {III}.
\newblock {\em Inst. Hautes \'Etudes Sci. Publ. Math.}, (28):255, 1966.

\bibitem[Jel05]{Jelonek}
Zbigniew Jelonek.
\newblock On the effective {N}ullstellensatz.
\newblock {\em Invent. Math.}, 162(1):1--17, 2005.

\bibitem[KA12]{18705}
Steven Kleiman and Allen Altman.
\newblock {\em A Term of Commutative Algebra}.
\newblock Worldwide Center of Mathematics, LLC, 2012.

\bibitem[KMSV06]{Kasatani}
M.~Kasatani, T.~Miwa, A.~N. Sergeev, and A.~P. Veselov.
\newblock Coincident root loci and {J}ack and {M}acdonald polynomials for
  special values of the parameters.
\newblock In {\em Jack, {H}all-{L}ittlewood and {M}acdonald polynomials},
  volume 417 of {\em Contemp. Math.}, pages 207--225. Amer. Math. Soc.,
  Providence, RI, 2006.

\bibitem[Kno89]{Knop}
F.~Knop.
\newblock Beweisdes fundamentallemmas und des scheibensatzes.
\newblock In H.~Kraft, P.~Slodowy, and T.~Springer, editors, {\em Algebraische
  Transformationsgruppen und Invariantentheorie}, pages 110--113. Birkhäuser
  Verlag, 1989.

\bibitem[Kur12]{Kurmann}
Simon Kurmann.
\newblock Some remarks on equations defining coincident root loci.
\newblock {\em J. Algebra}, 352:223--231, 2012.

\bibitem[LS16]{LS16}
Hwangrae Lee and Bernd Sturmfels.
\newblock Duality of multiple root loci.
\newblock {\em J. Algebra}, 446:499--526, 2016.

\bibitem[SV09]{Sergeev}
A.~N. Sergeev and A.~P. Veselov.
\newblock Deformed {M}acdonald-{R}uijsenaars operators and super {M}acdonald
  polynomials.
\newblock {\em Comm. Math. Phys.}, 288(2):653--675, 2009.

\bibitem[Wey89]{Weyman1}
Jerzy Weyman.
\newblock The equations of strata for binary forms.
\newblock {\em J. Algebra}, 122(1):244--249, 1989.

\bibitem[Wey93a]{Weyman3}
Jerzy Weyman.
\newblock Gordan ideals in the theory of binary forms.
\newblock {\em J. Algebra}, 161(2):370--391, 1993.

\bibitem[Wey93b]{Weyman2}
Jerzy Weyman.
\newblock On the {H}ilbert functions of multiplicity ideals.
\newblock {\em J. Algebra}, 161(2):358--369, 1993.

\end{thebibliography}
\bibliographystyle{alpha}

\end{document}